\chardef\bslash=`\\ 
\def\verbatim{\interlinepenalty\@M \@verbatim
  \leftskip\@totalleftmargin\advance\leftskip2pc
  \frenchspacing\@vobeyspaces \@xverbatim}
\newtheorem{thm}{Theorem}[section]
\newtheorem{cor}[thm]{Corollary}
\newtheorem{lem}[thm]{Lemma}
\newtheorem{prop}[thm]{Proposition}
\newtheorem{rem}[thm]{Remark} 
\numberwithin{equation}{section}
\newcommand{\begeq}{\begin {equation}}
\newcommand{\bs}{\begin {split}}
\newcommand{\es}{\end {split}}
\newcommand{\bp}{\begin {prop}}
\newcommand{\ep}{\end {prop}}
\newcommand{\bt}{\begin {thm}}
\newcommand{\et}{\end {thm}}
\newcommand{\bc}{\begin {cor}}
\newcommand{\ec}{\end {cor}}
\newcommand{\bl}{\begin {lem}}
\newcommand{\el}{\end {lem}}
\newcommand{\bpf}{\begin {proof}}
\newcommand{\epf}{\end {proof}}
\newcommand{\bi}{\begin {itemize}}
\newcommand{\ei}{\end {itemize}}
\newcommand{\ben}{\begin {enumerate}}
\newcommand{\een}{\end {enumerate}}
\newcommand{\brem}{\begin {rem}}
\newcommand{\erem}{\end {rem}}
\newcommand{\ZZ}{{\mathbb Z}}
\newcommand{\TT}{{\mathbb T}} 
\newcommand{\RR}{{\mathbb R}}
\newcommand{\CC}{{\mathbb C}}
\newcommand{\Rd}{ {\Bbb R}^d}
\newcommand{\Zd} {{\Bbb Z}^d}
\begin{document}
\bibliographystyle{plain}

\title[Wiener's lemma for Infinite Matrices II] {Wiener's Lemma for Infinite Matrices II}

\author{ Qiyu Sun }

\address{
Department of Mathematics,  University of Central Florida,
Orlando, FL 32816, USA}
\email{qsun@mail.ucf.edu}


\date{\today }

\subjclass{46H10, 42B25, 47B35, 47B37, 47A10, 46A45, 41A15, 42C40, 94A12   }

\keywords{Wiener's lemma,  Beurling algebra, Muckenhoupt $A_q$-weight, infinite matrix, stability, left inverse}


\maketitle
\begin{abstract}
In this paper, we introduce a  class  of infinite matrices
related to  the Beurling algebra
of periodic functions, and we  show that
it is an  inverse-closed subalgebra of ${\mathcal B}(\ell^q_w)$, the algebra
 of all bounded linear operators on the weight sequence space  $\ell^q_w$, for any $1\le q<\infty$ and any discrete Muckenhoupt  $A_q$-weight $w$.
\end{abstract}


\section{Introduction}
Let us begin the sequel to \cite{suntams07} by introducing a new class of infinite matrices,
\begin{equation}\label{beurling.def_origininal}
{\mathcal B}(\Zd, \Zd):=\Big\{ A:=(a(i,j))_{i,j\in \Zd}\ \Big|\  \|A\|_{\mathcal B(\Zd, \Zd)}<\infty\Big\},
\end{equation}
where $d\ge 1$, $|x|_\infty:=\max(|x_1|, \ldots, |x_d|)$ for $x:=(x_1, \ldots, x_d)\in \Rd$, and
\begin{equation}\label{beurlingnorm.def_original}
\|A\|_{{\mathcal B}(\Zd, \Zd)}:=\sum_{k\in \Zd}\Big(\sup_{|i-j|_\infty\ge |k|_\infty} |a(i,j)|\Big).
\end{equation}

It is observed that a Toeplitz matrix $A:=(a(i-j))_{i,j\in \ZZ}$ associated with a sequence $a:=(a(n))_{n\in \ZZ}$
belongs to ${\mathcal B}(\ZZ, \ZZ)$ if and only if the Fourier series $\hat a(\xi):=\sum_{n\in \ZZ} a(n)\exp(-\sqrt{-1}\ n\xi)$
belongs to the  algebra
\begin{equation}\label{classicalbeurling.def}
A^*(\TT):=\Big\{\sum_{n=-\infty}^\infty a(n) e^{-\sqrt{-1} \ n\xi }\ \Big| \ \sum_{k=0}^\infty \sup_{|n|\ge k} |a(n)|<\infty\Big\}.
\end{equation}
The above algebra $A^*(\TT)$
 was introduced by A. Beurling for establishing contraction properties of periodic functions \cite{beurling49}, and was
 used  in considering pointwise summability of Fourier series
\cite{ bo65,fe85, fw06, steinbook93,  
te73}.
So   the class ${\mathcal B}(\Zd,  \Zd)$ of infinite matrices can be interpreted  as a
noncommutative matrix extension of the {\em Beurling algebra} $A^*(\TT)$.

Define
the  {\em Gr\"ochenig-Schur class}  ${\mathcal S}(\Zd, \Zd)$ by
\begin{eqnarray}\label{schurold.def}
{\mathcal S}(\Zd, \Zd)  & := & \Big\{ \big(a(i,j)\big)_{i,j\in \Zd} \ \Big | \nonumber\\
& &  \quad \Big( \sup_{i\in \Zd} \sum_{j\in \Zd} |a(i,j)|,
\sup_{j\in \Zd} \sum_{i\in \Zd} |a(i,j)|\Big)<\infty\Big\}
\end{eqnarray}
\cite{gltams06, schur11, suncasp05, suntams07},
and   the {\em
 Gohberg-Baskakov-Sj\"ostrand class} ${\mathcal C}(\Zd, \Zd)$
 by
 \begin{equation}\label{gbsold.def}
{\mathcal C}(\Zd, \Zd):=\Big\{ \big(a(i,j)\big)_{i,j\in \Zd}\ \Big| \
 \sum_{k\in \Zd} \big(\sup_{i-j=k} |a(i,j)|\big) <\infty\Big\}
\end{equation}
\cite{baskakov90, gkwieot89, gltams06, kurbatov90, sjostrand94,suntams07}.  The above two classes of infinite matrices appeared in the study of  Gabor
time-frequency analysis, nonuniform sampling, and algebra of
pseudo-differential operators etc (see \cite{akramgrochenig01,
balanchl04,  grochenigl03, grochenigstrohmer,   sjostrand94, sunsiam06} for a sample of papers).
   From \eqref{beurling.def_origininal}, \eqref{schurold.def} and
    \eqref{gbsold.def} it follows that
\begin{equation}\label{bcsinclusion.eq}
{\mathcal B}(\Zd, \Zd)\subset {\mathcal C}(\Zd, \Zd) \subset {\mathcal S}(\Zd, \Zd).\end{equation}
This shows that  any matrix in  ${\mathcal B}(\Zd, \Zd)$ belongs to
the Gr\"ochenig-Schur class
   ${\mathcal S}(\Zd, \Zd)$ and also
    the
 Gohberg-Baskakov-Sj\"ostrand class ${\mathcal C}(\Zd, \Zd)$.

An equivalent way of defining   ${\mathcal B}(\Zd, \Zd)$  is the existence of
a radially decreasing  sequence $\{b(i)\}_{i\in \Zd}$
for any infinite matrix  $A:=(a(i,j))_{i,j\in \Zd}\in {\mathcal B}(\Zd, \Zd)$ such that
 \begin{equation}
 |a(i,j)|\le b(i-j)\quad  {\rm for \ all} \ i,j\in \Zd,
 \end{equation}
  \begin{equation} \sum_{i\in \Zd} b(i)<\infty,
  \end{equation}
  and
  \begin{equation}
b(i)= h(|i|_\infty)
\ {\rm for \ some\   decreasing \ sequence} \ \{h(n)\}_{n=0}^\infty.
\end{equation}
Therefore any infinite matrix in
${\mathcal B}(\Zd, \Zd)$ is dominated by a convolution operator associated with
a summable, radial and (radially) decreasing sequence.
We remark that
any infinite matrix in the  Gohberg-Baskakov-Sj\"ostrand class ${\mathcal C}(\Zd, \Zd)$
 is dominated by a convolution operator associated with
a summable sequence \cite{baskakov90, gkwieot89, gltams06, kurbatov90, sjostrand94,suntams07}.

\bigskip

 A positive sequence $w:=(w(i))_{i\in \Zd}$ is said to be a
{\em discrete $A_q$-weight} for $1<q<\infty$ if there exists a positive constant $C$ such that
\begin{equation}\label{discreteaqweight.eq1}
\Big( N^{-d} \sum_{i\in a+[0,N-1]^d}  w(i)\Big)
\Big(N^{-d}\sum_{i\in a+[0,N-1]^d}  (w(i))^{-(q-1)} \Big)^{1/(q-1)}\le C
\end{equation}
hold for  all $a\in \Zd$  and    $1\le N\in \ZZ$, and  to be
a
{\em discrete $A_1$-weight}  if there exists a positive constant $C$ such that
\begin{equation}\label{discreteaqweight.eq2}
 N^{-d} \sum_{i\in a+[0,N-1]^d}  w(i)
\le C \inf_{i\in a+[0,N-1]^d}  w(i)
\end{equation}
hold for  all $a\in \Zd$  and    $1\le N\in \ZZ$ \cite{garciabook85, steinbook93}. The  smallest constant $C$ for which \eqref{discreteaqweight.eq1}  holds
when $1<q<\infty$,
and respectively for which \eqref{discreteaqweight.eq2} holds when $q=1$, to be denoted by $A_q(w)$, is the {\em discrete $A_q$-bound}.
The positive  sequences  $((1+|i|_\infty)^\alpha)_{i\in \Zd}$
 with $-d<\alpha<d(q-1)$ if $1<q<\infty$, and respectively with $-d<\alpha\le 0$ if $q=1$, are discrete $A_q$-weights.

For $1<q<\infty$, a positive  locally integrable  function $w$ on $\Rd$ is said to  be an
{\em $A_q$-weight}  if there exists a positive constant $C$ such that
\begin{equation}\label{apweight.eq1}
\Big(\frac{1}{|Q|} \int_Q w(x) dx\Big) \Big(\frac{1}{|Q|} \int_{Q} w(x)^{-(q-1)} dx\Big)^{1/(q-1)} \le C\end{equation}
for all cubic $Q\subset \Rd$, where $|Q|$ denotes
 the Lebesgue measure of the cubic $Q$ \cite{rosenblum62}. Similarly for $q=1$,
a  positive locally integrable  function $w$ is said to  be an
{\em $A_1$-weight}  if there exists a positive constant $C$ such that
\begin{equation}
\label{apweight.eq2}\frac{1}{|Q|} \int_Q w(y) dy \le C w(x),\quad \ x\in Q\end{equation}
for all cubic $Q\subset \Rd$ \cite{fs71}.  One may then verify that
a positive sequence $w:=\big(w(i)\big)_{i\in \Zd}$ is a discrete $A_q$-weight if and only if $\tilde w(x):=\sum_{i\in \Zd} w(i) \chi_{[-1/2, 1/2)^d} (x-i)$ is an $A_q$-weight, where $1\le q<\infty$ and $\chi_E$ is the characteristic function on a set $E$ \cite{lemarie94}.

For $1\le q<\infty$ and a positive sequence $w:=(w(i))_{i\in \Zd}$ on $\Zd$, let
$\ell^q_w:=\ell^q_w(\Zd)$ be
the  space of  all weighted $q$-summable sequences on $\Zd$, i.e.,
\begin{equation}
\ell^q_w(\Zd) := \Big \{ (c(i))_{i\in \Zd}\ \Big|\  \|c\|_{q,w}:=\Big(\sum_{i\in \Zd} |c(i)|^q w(i)\Big)^{1/q}<\infty\Big\}.
\end{equation}
For the trivial weight $w_0$ (i.e. $w_0(i)=1$ for all $i\in \Zd$), we will use
$\ell^q$ and $\|\cdot\|_{q}$ instead of $\ell^q_{w_0}$ and $\|\cdot\|_{q,w_0}$ for brevity.
Define the  {\em discrete maximal function} by
\begin{equation}
Mc(i):=\sup_{0\le N\in \ZZ} \ \frac{1}{(2N+1)^{d}} \sum_{k\in i+[-N, N]^d}
|c(k)| \quad {\rm for} \ c:=(c(i))_{i\in \Zd}.
\end{equation}
Similar to the characterization of  an $A_q$-weight on $\Rd$
via  the standard
maximal operator 
\cite{mtams72},
  the discrete $A_q$-weight can be characterized via the discrete maximal function on the weighted $\ell^q$ space. More precisely, a positive sequence $w:=(w(i))_{i\in \Zd}$ is a discrete $A_q$-weight if and only if the  discrete maximal operator $c\longmapsto Mc$ is of weak-type $(\ell^q_w, \ell^q_w)$, i.e., there exists a positive constant $C$ such that
\begin{equation}
\sum_{Mc(i)\ge\alpha} w(i)\le \frac{C}{\alpha^q} \|c\|_{q,w}^q \quad {\rm for\ all} \ \alpha>0\ {\rm and} \ c\in \ell^p_w.
\end{equation}
Moreover for $1<q<\infty$, the discrete maximal operator $M$ is of strong type $(\ell^q_w, \ell^q_w)$ for a discrete $A_q$-weight $w$, i.e.,
there exists a positive constant $C'$ such that
\begin{equation}
\|Mc\|_{q,w} \le C' \|c\|_{q,w} \quad {\rm for\ all} \ c\in \ell^p_w.
\end{equation}
The reader may refer to \cite{garciabook85} for a complete account of the theory of weighted inequalities and its ramification.

\bigskip

Now let us  present our results for infinite  matrices in
 ${\mathcal B}(\Zd, \Zd)$.
 In Section \ref{algebra.section}, we establish the following algebraic properties for the class ${\mathcal B}(\Zd, \Zd)$  of infinite matrices.

\begin{thm}\label{beurlingalgebra.tm} Let $1\le q<\infty$, and let  $w$ be a discrete  $A_q$-weight. Then
${\mathcal B}(\Zd, \Zd)$
is a unital Banach algebra under matrix multiplication, and also a subalgebra of ${\mathcal B}(\ell^q_w)$,  the algebra of all bounded linear  operators on the weight sequence space $\ell^q_w$.
\end{thm}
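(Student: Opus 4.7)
The plan is to establish three ingredients in turn: (i) that $(\mathcal{B}(\Zd,\Zd),\|\cdot\|_{\mathcal{B}(\Zd,\Zd)})$ is a Banach space containing the identity, (ii) that matrix multiplication is bounded on it, and (iii) that every matrix in $\mathcal{B}(\Zd,\Zd)$ extends to a bounded operator on $\ell^q_w$ with operator norm dominated by $\|A\|_{\mathcal{B}(\Zd,\Zd)}$. Part (i) is essentially bookkeeping: the $\mathcal{B}$-norm dominates the entrywise supremum, so a Cauchy sequence converges entrywise and Fatou places the limit back in $\mathcal{B}(\Zd,\Zd)$; only the $k=0$ term in the defining sum survives for the identity matrix, giving $\|I\|_{\mathcal{B}(\Zd,\Zd)}=1$.

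For the submultiplicativity in (ii) I exploit the equivalent description emphasized in the excerpt: for each $A\in \mathcal{B}(\Zd,\Zd)$ the function $h_A(n):=\sup_{|i-j|_\infty\ge n}|a(i,j)|$ is decreasing, the envelope $b_A(\ell):=h_A(|\ell|_\infty)$ dominates $|a(i,j)|$ pointwise in $i-j$, and $\sum_{\ell\in\Zd}b_A(\ell)=\|A\|_{\mathcal{B}(\Zd,\Zd)}$. Writing $c(i,j):=\sum_\ell a(i,\ell)b(\ell,j)$ for the product $AB$, the triangle inequality forces $|i-\ell|_\infty\ge\lceil|k|_\infty/2\rceil$ or $|\ell-j|_\infty\ge\lceil|k|_\infty/2\rceil$ whenever $|i-j|_\infty\ge|k|_\infty$. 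Splitting the convolution $\sum_\ell b_A(i-\ell)b_B(\ell-j)$ along this dichotomy, using monotonicity of $h_A$ or $h_B$ on the ``far'' factor and leaving the full sum $\|A\|_{\mathcal{B}(\Zd,\Zd)}$ or $\|B\|_{\mathcal{B}(\Zd,\Zd)}$ on the other, yields
\[
\sup_{|i-j|_\infty\ge|k|_\infty}|c(i,j)|\le h_A(\lceil|k|_\infty/2\rceil)\,\|B\|_{\mathcal{B}(\Zd,\Zd)}+h_B(\lceil|k|_\infty/2\rceil)\,\|A\|_{\mathcal{B}(\Zd,\Zd)}.
\]
Summing over $k\in\Zd$ and applying the dyadic change of variable $k\mapsto 2k$ (which costs only the dimensional factor $2^d$) gives $\|AB\|_{\mathcal{B}(\Zd,\Zd)}\le C_d\,\|A\|_{\mathcal{B}(\Zd,\Zd)}\,\|B\|_{\mathcal{B}(\Zd,\Zd)}$, i.e.\ a strictly submultiplicative norm after a rescaling by $C_d$.

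Part (iii) reduces via the same radial envelope to the pointwise bound $|Af(i)|\le \sum_j h_A(|i-j|_\infty)|f(j)|$. Applying the layer-cake decomposition $h_A(n)=\sum_{m\ge n}(h_A(m)-h_A(m+1))$ and replacing the $(2m+1)^{-d}$-averages on the cubes $\{|i-j|_\infty\le m\}$ by the discrete maximal operator $M$, an Abel summation recognizes the leftover constant as exactly $\|A\|_{\mathcal{B}(\Zd,\Zd)}$, so that
\[
|Af(i)|\le \|A\|_{\mathcal{B}(\Zd,\Zd)}\,Mf(i)\qquad (i\in\Zd).
\]
For $1<q<\infty$, the strong-type $(\ell^q_w,\ell^q_w)$ bound for $M$ recalled in the introduction closes the argument. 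The endpoint $q=1$ is the delicate case, since $M$ is only of weak type there; I would sidestep it by Fubini, applying the convolution-by-$M$ inequality to the weight sequence $w$ itself:
\[
\sum_i w(i)|Af(i)|\le \sum_j|f(j)|\sum_i b_A(i-j)w(i)\le \|A\|_{\mathcal{B}(\Zd,\Zd)}\sum_j|f(j)|\,Mw(j),
\]
and then invoking the defining property $Mw\le A_1(w)\,w$ of an $A_1$-weight. The main technical hurdle is the clean collapse of the layer-cake sum to exactly the $\mathcal{B}$-norm, coupled with the separate treatment of the $q=1$ endpoint, where the strong-type maximal inequality is unavailable and one must argue by duality through the weight.
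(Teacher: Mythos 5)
Your proof is correct. The algebra half is essentially the paper's own argument specialized to $p=1$, $u\equiv 1$: the product estimate rests on the same dichotomy ($|i-\ell|_\infty\ge |k|_\infty/2$ or $|\ell-j|_\infty\ge |k|_\infty/2$) and the same counting inequality \eqref{banachalgebra.pf.eq6} with $N=2$, and the renormalization needed to make the norm genuinely submultiplicative with $\|I\|=1$ is exactly what the paper's Remark \ref{banachalgebra.rem1} does via the regular representation. Where you genuinely diverge is the $\ell^q_w$-boundedness. The paper proves it by hand: it decomposes the kernel over dyadic annuli $2^{l-1}\le |i-j|_\infty<2^l$, applies H\"older against $w^{-1/(q-1)}$ on each annulus, and invokes the defining $A_q$ inequality \eqref{discreteaqweight.eq4} directly, with the summability bound $\sum_l h(2^{l-1})2^{(l+1)d}\le 2^{2d}\|A\|_{\mathcal B}$ playing the role of your Abel-summation identity. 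You instead establish the pointwise domination $|Af(i)|\le \|A\|_{\mathcal B}\, Mf(i)$ --- and your ``clean collapse'' is indeed clean: Abel summation turns $\sum_m (h(m)-h(m+1))(2m+1)^d$ into $\sum_{k\in\Zd} h(|k|_\infty)=\|A\|_{\mathcal B}$, the boundary term vanishing because summability of $h(|k|_\infty)$ forces $h(M)M^d\to 0$ --- and then cite the strong-type $(\ell^q_w,\ell^q_w)$ maximal theorem that the paper itself records in the Introduction. Your route is shorter and matches the paper's own remark that its conclusion (iv) is a discrete version of Stein's result on operators dominated by radial decreasing kernels; the paper's route is self-contained and produces the explicit constant $2^{2d}3^{d/q}(A_q(w))^{1/q}$. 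A further point in your favor: you give an explicit $q=1$ argument (Fubini plus $Mw\le A_1(w)\,w$, which is legitimate since centered cubes are among the cubes in \eqref{discreteaqweight.eq2}), whereas the paper only asserts that the $q=1$ case ``can be proved by similar arguments.''
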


\bigskip

By Theorem \ref{beurlingalgebra.tm},
every infinite matrix $A\in {\mathcal B}(\Zd, \Zd)$
defines a bounded operator on $\ell^q_w$ for any $1\le q<\infty$ and for
 any  discrete $A_q$-weight $w$, i.e.,
 there exists a positive constant $C$ such that
\begin{equation}\label{upperboundnew.eq1}
\|Ac\|_{q, w}\le C \|c\|_{q, w} \quad {\rm for\ all} \ c\in \ell^q_w.
\end{equation}
Besides the above  boundedness of an infinite matrix
 on the weighted
sequence space $\ell^q_w$, it is natural to consider
 $\ell^q_w$-stability.
Here for  $1\le q<\infty$ and a positive sequence $w$ on $\Zd$,
we say that
 an infinite matrix $A$ has {\em $\ell^{q}_{w}$-stability} if
  there exists a positive constant $C$ such that
\begin{equation}
C^{-1} \|c\|_{q,w} \le \|Ac\|_{q,w} \le C\|c\|_{q,w}  \quad {\rm for \ all} \ c\in {\ell^q_w}.
\end{equation}
The $\ell^q_w$-stability  is one of the basic assumptions for infinite
matrices arising in the study of spline approximation,  Gabor
time-frequency analysis, nonuniform sampling, and algebra of
pseudo-differential operators etc (see \cite{akramgrochenig01,
balanchl04,  grochenigl03, grochenigstrohmer, jaffard90,   sjostrand94, sunsiam06,
suntams07}
 and the references therein.)
In Section \ref{stability.section}, we  establish the equivalence of
$\ell^q_w$-stabilities of  any infinite matrix in ${\mathcal B}(\Zd, \Zd)$ for different exponents $1\le q<\infty$ and for different  discrete
$A_q$-weights $w$.

\begin{thm}\label{beurlingstability.tm}
 Let $A\in {\mathcal B}(\Zd, \Zd)$. If $A$ has $\ell^q_w$-stability for some
$1\le q<\infty$ and for some discrete
$A_q$-weight $w$, then it
has $\ell^{q'}_{w'}$-stability for all
$1\le q'<\infty$ and for all discrete
$A_{q'}$-weights $w'$.
\end{thm}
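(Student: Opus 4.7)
The strategy is to construct a bounded left inverse $B$ of $A$ that itself lies in $\mathcal{B}(\Zd,\Zd)$. Once such a $B$ is in hand, Theorem~\ref{beurlingalgebra.tm} applied to $B$ makes it bounded on every $\ell^{q'}_{w'}$ with $1\le q'<\infty$ and $w'$ a discrete $A_{q'}$-weight, and the identity $BA=I$ yields the lower stability bound $\|c\|_{q',w'}\le\|B\|_{\mathcal{B}(\ell^{q'}_{w'})}\,\|Ac\|_{q',w'}$; the matching upper bound is the boundedness half of Theorem~\ref{beurlingalgebra.tm} applied to $A$ itself.

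To produce such a $B$, I work with $A^*A$. First, $A^*\in\mathcal{B}(\Zd,\Zd)$, since the defining norm \eqref{beurlingnorm.def_original} is symmetric under the exchange $i\leftrightarrow j$ and so is unchanged by passage to the matrix adjoint. Consequently, by the algebra property of Theorem~\ref{beurlingalgebra.tm}, $A^*A\in\mathcal{B}(\Zd,\Zd)$. Provided that $A^*A$ is invertible on $\ell^2$, a Wiener-type inverse-closedness result for $\mathcal{B}(\Zd,\Zd)$ (the central technical input of the paper, adapting Jaffard's and Gr\"ochenig--Leinert's spectral arguments to Beurling-type decay) yields $(A^*A)^{-1}\in\mathcal{B}(\Zd,\Zd)$. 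Setting $B:=(A^*A)^{-1}A^*$ then gives $B\in\mathcal{B}(\Zd,\Zd)$ with $BA=I$, as required.

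The main obstacle is the reduction step: extracting $\ell^2$-stability of $A$ (equivalently, invertibility of the positive self-adjoint operator $A^*A$ on $\ell^2$) from the hypothesis of $\ell^q_w$-stability, since when $(q,w)\ne(2,1)$ the Cauchy--Schwarz shortcut $\|A^*Ac\|_2\,\|c\|_2\ge\|Ac\|_2^2$ is unavailable in the weighted Banach setting. To bridge the two norm scales I plan to exploit two structural features simultaneously: first, every matrix in $\mathcal{B}(\Zd,\Zd)$ is pointwise dominated by a convolution operator with a summable, radially decreasing kernel, so that both $|Ac(i)|$ and $|A^*c(i)|$ are controlled by a discrete maximal function of $c$; second, the discrete Muckenhoupt $A_q$ condition is precisely what makes the discrete maximal operator bounded on $\ell^q_w$. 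Chaining these ingredients, and working on the dense subspace of finitely supported sequences (which lies in both $\ell^2$ and $\ell^q_w$ simultaneously), one transfers the hypothesized $\ell^q_w$ lower bound into an unweighted $\ell^2$ lower bound and then extends by continuity using the summable off-diagonal decay of $A$; this supplies the $\ell^2$-stability needed to invoke the Wiener lemma, after which the construction of $B$ and the appeal to Theorem~\ref{beurlingalgebra.tm} complete the argument.
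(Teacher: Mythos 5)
Your overall architecture (reduce to $\ell^2$-stability, invert $A^*A$ by Wiener's lemma, set $B=(A^*A)^{-1}A^*$, and push $B$'s boundedness through all weighted spaces) is coherent and in fact mirrors how the paper proves Corollary \ref{leftinverse.cor2}. The observation that $A^*\in {\mathcal B}(\Zd,\Zd)$ because the norm \eqref{beurlingnorm.def_original} is symmetric in $i,j$ is also correct. But there is a genuine gap at exactly the point you identify as ``the main obstacle'': your proposed bridge from $\ell^q_w$-stability to $\ell^2$-stability does not work. Pointwise domination of $|Ac(i)|$ and $|A^*c(i)|$ by a maximal function of $c$, together with the $A_q$-boundedness of the discrete maximal operator, yields only \emph{upper} bounds of the form $\|Ac\|_{2}\le C\|c\|_2$; no amount of chaining such dominations can convert the hypothesized \emph{lower} bound $\|Ac\|_{q,w}\ge C^{-1}\|c\|_{q,w}$ into a lower bound in a different norm. (If it could, every convolution-dominated matrix with a stable ``companion'' would be stable, which is false.) Since the transfer of the lower bound is the entire content of the theorem, the proposal as written is missing its central idea.

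What the paper actually does (Lemma \ref{sequencespacesequivalence.lem}) is to upgrade the assumed $\ell^q_w$-stability to a \emph{pointwise} approximate-inverse inequality
\begin{equation*}
|c(i)|\le \sum_{j\in\Zd} g(i-j)\,|(Ac)(j)|,\qquad c\in\ell^\infty,
\end{equation*}
where $g$ satisfies the Beurling summability condition $\sum_k \sup_{|i|_\infty\ge |k|_\infty} g(i)<\infty$. This is obtained by a partition-of-unity/commutator argument: one localizes with cutoffs $\Psi_n^N$, estimates the commutators $\Psi_n^N A-A\Psi_n^N$ (which become small in a convolution-dominated sense as $N\to\infty$ precisely because $A\in{\mathcal B}(\Zd,\Zd)$), and sums a Neumann-type series of the resulting kernels. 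Once this pointwise inequality is in hand, the lower bound in \emph{every} $\ell^{q'}_{w'}$ follows at once from the boundedness of the Beurling convolution kernel $g$ on $\ell^{q'}_{w'}$ (Theorem \ref{banachalgebra.tm}(iv)), with no need to pass through $\ell^2$, $A^*A$, or Wiener's lemma at all. To repair your proof you would need to supply an argument of this localization type; the maximal-function route cannot substitute for it.
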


The reader may refer to \cite{akramjfa09, shincjfa09, tessera09}
for the equivalence of unweighted  $\ell^q_w$-stability of
 infinite matrices in the Gohberg-Baskakov-Sj\"ostrand class ${\mathcal C}(\Zd, \Zd)$.
  If $A\in {\mathcal B}(\ell^q_w)$ has a {\em left inverse} $B\in {\mathcal B}(\ell^q_w)$, i.e., $BA=I$, then $A$ has $\ell^q_w$-stability. The converse is not true in general, unless $q=2$. As an application of  Theorem \ref{beurlingstability.tm}, we show that the converse holds for any infinite
 matrix $A$ in ${\mathcal B}(\Zd, \Zd)$.

 \begin{cor}\label{leftinverse.cor1}
  Let $1\le q< \infty$, and let $w$ be a discrete $A_q$-weight. Then an infinite matrix in ${\mathcal B}(\Zd, \Zd)$
   has $\ell^q_w$-stability if and only if it has a left inverse  in ${\mathcal B}(\ell^q_w)$.
 \end{cor}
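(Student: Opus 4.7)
The plan is to dispense with the easy implication first and then reduce the converse to a Hilbert space computation via \thmref{beurlingstability.tm}. For the ``if'' direction, if $B\in {\mathcal B}(\ell^q_w)$ satisfies $BA=I$, I would simply write $\|c\|_{q,w}=\|BAc\|_{q,w}\le \|B\|\,\|Ac\|_{q,w}$ and combine with the upper bound $\|Ac\|_{q,w}\le \|A\|_{{\mathcal B}(\ell^q_w)}\,\|c\|_{q,w}$ supplied by \thmref{beurlingalgebra.tm} to recover $\ell^q_w$-stability.

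For the nontrivial direction, suppose $A\in {\mathcal B}(\Zd,\Zd)$ has $\ell^q_w$-stability. Because the trivial weight $w_0\equiv 1$ is plainly a discrete $A_2$-weight, \thmref{beurlingstability.tm} promotes the hypothesis to $\ell^2$-stability; that is, $A$ is bounded below on the Hilbert space $\ell^2$. Consequently $A^*A$ is a positive self-adjoint operator on $\ell^2$ satisfying $A^*A\ge C^{-2}I$ for some $C>0$, and is therefore invertible in ${\mathcal B}(\ell^2)$. The natural candidate for a left inverse is then $B:=(A^*A)^{-1}A^*$, and the only remaining task is to argue $B\in {\mathcal B}(\ell^q_w)$.

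To that end I would first note that the Beurling norm \eqref{beurlingnorm.def_original} is symmetric in $(i,j)$, so ${\mathcal B}(\Zd,\Zd)$ is stable under the adjoint operation $A\mapsto A^*$, whence $A^*\in {\mathcal B}(\Zd,\Zd)$ and, by the algebra property in \thmref{beurlingalgebra.tm}, the product $A^*A$ also lies in ${\mathcal B}(\Zd,\Zd)$. The crucial step is then to invoke the inverse-closedness of ${\mathcal B}(\Zd,\Zd)$ in ${\mathcal B}(\ell^2)$---the main Wiener-type result of this paper, announced in the abstract---to conclude $(A^*A)^{-1}\in {\mathcal B}(\Zd,\Zd)$. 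This places $B=(A^*A)^{-1}A^*$ in ${\mathcal B}(\Zd,\Zd)\subset {\mathcal B}(\ell^q_w)$, and since $BA=I$ on $\ell^2$ (and a fortiori on the dense subspace of finitely supported sequences, hence on all of $\ell^q_w$ by continuity), $B$ is the desired left inverse.

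The principal obstacle is not located in this corollary at all: once one grants the inverse-closedness statement for ${\mathcal B}(\Zd,\Zd)$ (which must be established separately and is the substantive ``Wiener's lemma'' of the paper), the argument is merely the standard $(A^*A)^{-1}A^*$ trick applied after reducing to $q=2$ via \thmref{beurlingstability.tm}. The only small technical point to verify carefully is the $*$-closure of ${\mathcal B}(\Zd,\Zd)$, which however is transparent from the symmetry of \eqref{beurlingnorm.def_original} in the indices.
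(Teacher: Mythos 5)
Your proposal is correct and matches the paper's own argument essentially step for step: the paper deduces Corollary \ref{leftinverse.cor1} from Corollary \ref{leftinverse.cor2}, whose proof likewise passes from $\ell^q_w$-stability to $\ell^2$-stability via Theorem \ref{beurlingstability.tm}, forms $B=(A^*A)^{-1}A^*$ using the $*$-closure and algebra property of ${\mathcal B}(\Zd,\Zd)$ together with the inverse-closedness in ${\mathcal B}(\ell^2)$ from Theorem \ref{wienerlemmaforbeurlingalgebra.tm}, and then extends $BA=I$ to $\ell^q_w$ by density. No substantive differences to report.
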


\bigskip

 Given a Banach algebra ${\mathcal B}$,  a
subalgebra ${\mathcal A}$ of ${\mathcal B}$ is said to be {\em
inverse-closed}   if
$A\in {\mathcal A}$ and  the inverse $A^{-1}$ of the element $A$
exists in  ${\mathcal B}$  implies
that $A^{-1}\in {\mathcal A}$  \cite{gelfandbook, Naimarkbook, takesaki}.
The next question following the $\ell^q_w$-stability of an infinite matrix in ${\mathcal B}(\Zd, \Zd)$ is  whether its  inverse, if exists in ${\mathcal B}(\ell^q_w)$, belongs to ${\mathcal B}(\Zd, \Zd)$, or in the other word, whether ${\mathcal B}(\Zd, \Zd)$ is an inverse-closed subalgebra of ${\mathcal B}(\ell^q_w)$.

The inverse-closedness for the subalgebra of absolutely convergent Fourier series in the algebra of bounded periodic functions was first studied in
\cite{bochner, gelfandbook, Naimarkbook, wiener}.
The inverse-closed property (=Wiener's lemma)
has been established for infinite matrices satisfying
various off-diagonal decay conditions, see
 \cite{balan, balank10,
 baskakov90, fendler06, grochenigklotz10,
gltams06,  jaffard90, sjostrand94, suncasp05,
suntams07} for a sample of papers. Inverse-closedness occurs under various names (such as spectral invariance, Wiener pair, local subalgebra)
in many fields of mathematics, see the survey \cite{grochenigsurvey}.

The  inverse-closed property for non-commutative matrix subalgebra
 has been shown to be  crucial for  the
well-localization of dual wavelet frames
and  dual Gabor frames  \cite{balanchl04,  grochenigl03, jaffard90, krishtal10,  krishtaljat09},
the algebra of pseudo-differential operators \cite{grochenig06, grochenigstrohmer, sjostrand94},
the fast implementation in numerical analysis \cite{christensenstrohmer, dahlkejat09,  grochenigrsappear},  and the  local reconstruction in sampling theory \cite{akramgrochenig01, sunsiam06, sunaicm08}.

It mixes art and hard mathematical work to consider
 the  inverse-closed  subalgebra
  of ${\mathcal B}(\ell^q_w)$.  The art is to guess the off-diagonal decay of infinite matrices in an algebra ${\mathcal A}$, while the work is to prove the inverse-closedness of the algebra ${\mathcal A}$  in
  ${\mathcal B}(\ell^q_w)$.
There are several approaches to prove  the inverse-closedness
of a subalgebra of ${\mathcal B}(\ell^q_w)$.
 Here are three of them: (i) the indirect
approach, such as the Gelfand's technique
\cite{fendler06, gelfandbook,  gltams06}; (ii) the semi-direct approach, such as the
bootstrap argument \cite{jaffard90} and  the derivation trick \cite{grochenigklotz10}; (iii) the direct approach,
such as the commutator  trick \cite{baskakov90, sjostrand94} and
the asymptotic estimate technique \cite{shincjfa09, suncasp05, suntams07}.
 Each approach has its advantages and
limitations. For instance, the Gelfand technique and the asymptotic estimate technique
 work well for inverse-closed subalgebras of ${\mathcal B}(\ell^q)$ with $q=2$,
  but they are not directly applicable for  inverse-closed subalgebras of ${\mathcal B}(\ell^q)$ with $q\ne 2$.
The commutator trick is applicable to establish
 Wiener's lemma for subalgebra  of ${\mathcal B}(\ell^q), 1\le q\le \infty$
\cite{baskakov90, sjostrand94}.
In Section \ref{wiener.section}, we combine the commutator trick,  the
 asymptotic estimate technique and the  equivalence of $\ell^q_w$-stability for different exponents $q$ and for different discrete $A_q$-weights $w$,  and then  establish  Wiener's lemma for subalgebras of ${\mathcal B}(\ell^q_w)$, where $1\le q< \infty$ and  $w$ is a discrete $A_q$-weight.

\begin{thm}\label{wienerlemmaforbeurlingalgebra.tm}
Let $1\le  q<\infty$  and let
$w$ be a discrete $A_q$-weight. Then ${\mathcal B}(\Zd, \Zd)$ is an inverse-closed
subalgebra of ${\mathcal B}(\ell^q_w)$.
\end{thm}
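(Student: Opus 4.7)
The plan is to reduce first to the Hilbert-space case via Theorem~\ref{beurlingstability.tm} and then carry out a commutator and asymptotic-estimate argument to control the radial tails of the inverse.

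For the reduction, if $A\in{\mathcal B}(\Zd,\Zd)$ has inverse $B\in{\mathcal B}(\ell^q_w)$, then by Theorem~\ref{beurlingstability.tm} (applied to both $A$ and its adjoint $A^*$, which also lies in ${\mathcal B}(\Zd,\Zd)$ since the norm \eqref{beurlingnorm.def_original} is symmetric in the indices and invariant under complex conjugation) together with the standard Muckenhoupt duality between $\ell^q_w$ and $\ell^{q'}_{w^{1-q'}}$, one upgrades invertibility on $\ell^q_w$ to bijectivity on $\ell^2$. The $\ell^2$-inverse then agrees with $B$ as an infinite matrix by uniqueness of the formal algebraic inverse. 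It therefore suffices to prove: if $A\in{\mathcal B}(\Zd,\Zd)$ is invertible on $\ell^2$, then $A^{-1}\in{\mathcal B}(\Zd,\Zd)$.

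For this $\ell^2$ case, write $B:=A^{-1}$ and set $h_B(n):=\sup_{|i-j|_\infty\ge n}|b(i,j)|$; the target is $\sum_{n\ge 0}h_B(n)<\infty$. Fix $n\ge 1$ and indices $i_0,j_0\in\Zd$ with $|i_0-j_0|_\infty\ge 2n$, and select a $\{0,1\}$-valued cut-off sequence $\phi$ on $\Zd$ taking the value $1$ on a half-space containing $i_0$ and $0$ on the opposite half-space containing $j_0$, placed so that both $i_0$ and $j_0$ lie at distance $\ge n$ from the jump. With $M_\phi$ denoting the corresponding diagonal multiplier, the commutator identity
\[
[B,M_\phi]\;=\;-\,B\,[A,M_\phi]\,B
\]
combined with $\phi(i_0)-\phi(j_0)=\pm 1$ yields $|b(i_0,j_0)|\le |(B\,[A,M_\phi]\,B)(i_0,j_0)|$. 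The entries of $[A,M_\phi]$ vanish unless $(i,j)$ straddles the jump, in which case $|i-j|_\infty\ge n$ bounds them pointwise by the Beurling tail of $A$ at level $n$. Coupling this tail control with the $\ell^2$-boundedness of $B$, the Banach-algebra property from Theorem~\ref{beurlingalgebra.tm}, and the domination of $\mathcal B$-matrices by summable radial convolution kernels, one obtains a pointwise estimate for $h_B(n)$ whose summability over $n$ follows from $A\in{\mathcal B}(\Zd,\Zd)$.

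The main obstacle I anticipate is the mismatch between the natural output of the commutator trick and the sharpness of the Beurling decay. Applied off-the-shelf, the method (as in Sj\"ostrand's original argument) delivers only convolution-dominated decay, i.e.\ membership of $A^{-1}$ in ${\mathcal C}(\Zd,\Zd)$, which by \eqref{bcsinclusion.eq} is strictly weaker than ${\mathcal B}(\Zd,\Zd)$-membership. To extract the stronger radial-envelope decay one must track the scale $n$ of the cut-off $\phi$ through the estimate and sum the resulting tails using the radial structure of $A$; this is expected to require a dyadic-scale construction of cut-offs and an iterative bootstrap invoking both the algebra property of Theorem~\ref{beurlingalgebra.tm} and the stability equivalence of Theorem~\ref{beurlingstability.tm}. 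Making this quantitative step work is the heart of the argument.
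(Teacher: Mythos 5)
Your overall strategy (stability equivalence plus a commutator/asymptotic-estimate argument) is the one the paper announces, but your proposal stops exactly where the proof has to be done, and the step you defer \emph{is} the theorem. You concede that the off-the-shelf identity $[B,M_\phi]=-B[A,M_\phi]B$ with a single half-space cut-off yields at best Gohberg--Baskakov--Sj\"ostrand decay, and you postpone the upgrade to Beurling decay to an unspecified ``dyadic-scale construction and iterative bootstrap.'' That upgrade is the entire content of the result, and the set-up you sketch has a concrete obstruction: the nonzero entries of $[A,M_\phi]$ are the pairs $(i,j)$ straddling the jump, and these include pairs with $|i-j|_\infty$ small; for those the straddling condition only forces $|i-i_0|_\infty\gtrsim n$ or $|j-j_0|_\infty\gtrsim n$, so controlling their contribution to $(B[A,M_\phi]B)(i_0,j_0)$ requires off-diagonal decay of $B$ itself, which is circular. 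The remaining pairs with $|i-j|_\infty\gtrsim n$ contribute a term of size about $\sum_{|k|_\infty\ge n/2}\sup_{|i-j|_\infty\ge|k|_\infty}|a(i,j)|$, and summing this over $n$ gives $\sum_{k}(1+|k|_\infty)\sup_{|i-j|_\infty\ge|k|_\infty}|a(i,j)|$ --- a Dini-type condition strictly stronger than $A\in{\mathcal B}(\Zd,\Zd)$. So the single-cut-off scheme does not close under the stated hypotheses.

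The paper avoids this by first proving a left-inverse domination lemma (Lemma \ref{sequencespacesequivalence.lem}): if $A\in{\mathcal B}(\Zd,\Zd)$ has $\ell^q_w$-stability, there is a nonnegative sequence $g$ with $\sum_{k}\sup_{|i|_\infty\ge|k|_\infty}g(i)<\infty$ such that $|c(i)|\le\sum_{j}g(i-j)\,|(Ac)(j)|$ for all bounded $c$. This is obtained not from hyperplane cut-offs but from a smooth partition of unity $\{\Psi^N_n\}_{n\in N\Zd}$ at a single sufficiently large fixed scale $N$, commutator bounds for $\Psi^N_nA-A\Psi^N_n$ in the two regimes $|n-n'|_\infty\le 8N$ and $|n-n'|_\infty>8N$, and a Neumann series $\sum_{l}V_N^l$ for the resulting interaction kernel on $N\Zd$, which converges in the Beurling-type norm because $\epsilon^1_N\to0$ as $N\to\infty$. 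Theorem \ref{wienerlemmaforbeurlingalgebra.tm} then follows in a few lines by applying the lemma to truncations of the columns of $A^{-1}$ and passing to the limit, giving $|b(i,j)|\le g(i-j)$ directly; in particular no reduction to $\ell^2$ is needed, which also sidesteps the duality issues your reduction faces at $q=1$ and the identification of the $\ell^2$- and $\ell^q_w$-inverses. To complete your argument you would need to actually construct and sum such an iteration; as written, the proposal has a genuine gap at its central quantitative step.
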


As an application of Theorem \ref{wienerlemmaforbeurlingalgebra.tm}, we obain Wiener's lemma
 for the Beurling algebra $A^*(\TT)$ of periodic functions \cite{belinskiijfaa97}.

\begin{cor}\label{beurling.cor}
If $f\in A^*(\TT)$ and $f(\xi)\ne 0$ for all $\xi\in \RR$ then $ 1/f\in A^*({\TT})$.
\end{cor}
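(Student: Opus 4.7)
The plan is to lift the statement from the trigonometric-series side to the matrix side, apply Theorem~\ref{wienerlemmaforbeurlingalgebra.tm} there, and then push back down.

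First I would use the observation in the Introduction that a Toeplitz matrix $A_f:=(a(i-j))_{i,j\in\ZZ}$ built from the Fourier coefficients of $f(\xi)=\sum_{n\in\ZZ} a(n)e^{-\sqrt{-1}n\xi}$ belongs to $\mathcal{B}(\ZZ,\ZZ)$ if and only if $f\in A^*(\TT)$. So by hypothesis $A_f\in\mathcal{B}(\ZZ,\ZZ)$. Next I would check that $f$ is continuous on $\TT$: the Beurling-type condition $\sum_{k\ge 0}\sup_{|n|\ge k}|a(n)|<\infty$ implies $(a(n))_{n\in\ZZ}\in \ell^1(\ZZ)$ (since $|a(n)|\le \sup_{|m|\ge |n|}|a(m)|$ and the latter is summable in $n$), hence $f$ is uniformly the limit of its Fourier series and is continuous. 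Combined with $f(\xi)\ne 0$ for all $\xi$ and compactness of $\TT$, this yields $\inf_{\xi}|f(\xi)|>0$.

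Then I would invoke the Fourier isometry $\mathcal{F}\colon \ell^2(\ZZ)\to L^2(\TT)$ under which $A_f$ corresponds to pointwise multiplication by $f$ on $L^2(\TT)$. Since $|f|$ is bounded away from $0$, multiplication by $f$ is invertible on $L^2(\TT)$ with inverse the multiplication by $1/f$, and therefore $A_f$ is invertible in $\mathcal{B}(\ell^2)$. Moreover $A_f$ commutes with translations (it is Toeplitz), so its inverse also commutes with translations, hence is itself a Toeplitz matrix $A_g:=(b(i-j))_{i,j\in\ZZ}$ where $(b(n))_{n\in\ZZ}$ is the Fourier coefficient sequence of $1/f$.

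At this point the main step is Wiener's lemma: applying Theorem~\ref{wienerlemmaforbeurlingalgebra.tm} with $q=2$ and trivial weight $w\equiv 1$ (which is a discrete $A_2$-weight), since $A_f\in\mathcal{B}(\ZZ,\ZZ)$ and $A_f$ is invertible in $\mathcal{B}(\ell^2)$, we obtain $A_f^{-1}=A_g\in\mathcal{B}(\ZZ,\ZZ)$. Using the observation once more in the reverse direction, this is exactly the statement that $1/f=\sum_n b(n)e^{-\sqrt{-1}n\xi}\in A^*(\TT)$.

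There is no serious obstacle: once the dictionary between Toeplitz matrices in $\mathcal{B}(\ZZ,\ZZ)$ and Fourier series in $A^*(\TT)$ is set up and the elementary point that $A^*(\TT)\subset C(\TT)$ is verified, the corollary is an immediate consequence of Theorem~\ref{wienerlemmaforbeurlingalgebra.tm}; the only mild care needed is the verification that the inverse of a Toeplitz operator that is invertible on $\ell^2$ is again Toeplitz with symbol $1/f$, which follows from its translation invariance and Fourier diagonalization.
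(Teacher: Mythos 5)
Your proposal is correct and follows essentially the same route as the paper: form the Toeplitz matrix $A_f\in{\mathcal B}(\ZZ,\ZZ)$, observe it is invertible in ${\mathcal B}(\ell^2)$ with inverse the Toeplitz matrix of $1/f$, apply Theorem~\ref{wienerlemmaforbeurlingalgebra.tm}, and translate back. The only difference is that you spell out the details the paper leaves implicit (that $A^*(\TT)\subset\ell^1$-Fourier series gives continuity, that nonvanishing plus compactness gives a lower bound on $|f|$, and that the $\ell^2$-inverse is again Toeplitz with symbol $1/f$), which is a reasonable amount of added care rather than a different argument.
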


As applications of Theorems \ref{beurlingstability.tm}   and \ref{wienerlemmaforbeurlingalgebra.tm},  we  establish the equivalence between the $\ell^q_w$-stability of an infinite matrix in ${\mathcal B}(\Zd, \Zd)$ and the existence of its left inverse in ${\mathcal B}(\Zd, \Zd)$.

 \begin{cor}\label{leftinverse.cor2}
  Let $1\le q\le \infty$, and let $w$ be a discrete $A_q$-weight. Then an infinite matrix in ${\mathcal B}(\Zd, \Zd)$ has $\ell^q_w$-stability if and only if it has a left inverse  in ${\mathcal B}(\Zd, \Zd)$.
 \end{cor}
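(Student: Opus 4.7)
The plan is to treat the two directions separately. The implication from ``left inverse in the algebra'' to ``$\ell^q_w$-stability'' is immediate from the Banach algebra structure of Theorem \ref{beurlingalgebra.tm}, while the converse is an elegant consequence of combining the stability-equivalence Theorem \ref{beurlingstability.tm} with Wiener's lemma (Theorem \ref{wienerlemmaforbeurlingalgebra.tm}) through a Moore--Penrose type construction.

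\emph{Sufficiency.} Suppose $B\in{\mathcal B}(\Zd,\Zd)$ satisfies $BA=I$. By Theorem \ref{beurlingalgebra.tm} both $A$ and $B$ act boundedly on $\ell^q_w$, so for every $c\in\ell^q_w$
$$
\|c\|_{q,w}=\|BAc\|_{q,w}\le \|B\|_{{\mathcal B}(\ell^q_w)}\|Ac\|_{q,w}\le \|B\|_{{\mathcal B}(\ell^q_w)}\|A\|_{{\mathcal B}(\ell^q_w)}\|c\|_{q,w},
$$
which is precisely $\ell^q_w$-stability.

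\emph{Necessity.} Suppose $A\in{\mathcal B}(\Zd,\Zd)$ has $\ell^q_w$-stability. The defining norm \eqref{beurlingnorm.def_original} depends only on $|a(i,j)|$ and on $|i-j|_\infty$, both of which are symmetric under $(i,j)\mapsto(j,i)$, so ${\mathcal B}(\Zd,\Zd)$ is closed under taking adjoints with $\|A^*\|_{{\mathcal B}(\Zd,\Zd)}=\|A\|_{{\mathcal B}(\Zd,\Zd)}$. By Theorem \ref{beurlingstability.tm}, applied with $q'=2$ and the trivial $A_2$-weight, $A$ has $\ell^2$-stability, so
$$
\la A^*Ac,c\ra=\|Ac\|_2^2\ge C^{-2}\|c\|_2^2 \quad \text{for all } c\in\ell^2.
$$
Thus the positive self-adjoint operator $A^*A$ is bounded below, hence invertible on $\ell^2$, and it belongs to ${\mathcal B}(\Zd,\Zd)$ by the Banach algebra property (Theorem \ref{beurlingalgebra.tm}). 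Wiener's lemma (Theorem \ref{wienerlemmaforbeurlingalgebra.tm}, applied with $q=2$ and trivial weight) then gives $(A^*A)^{-1}\in{\mathcal B}(\Zd,\Zd)$, so the Moore--Penrose type matrix
$$
B:=(A^*A)^{-1}A^*
$$
lies in ${\mathcal B}(\Zd,\Zd)$ and satisfies $BA=(A^*A)^{-1}(A^*A)=I$.

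\emph{Main obstacle.} There is essentially no technical obstacle remaining once the preceding theorems are in hand: the corollary simply packages the substance of Theorems \ref{beurlingstability.tm} and \ref{wienerlemmaforbeurlingalgebra.tm}. The only subsidiary checks are the one-line facts that ${\mathcal B}(\Zd,\Zd)$ is closed under adjoints and that $\ell^2$-stability of $A$ is equivalent to the uniform positivity of $A^*A$ on $\ell^2$. The endpoint $q=\infty$ requires either an extension of Theorem \ref{beurlingstability.tm} to $\ell^\infty$-stability hypotheses or a short duality argument, but once the passage to $\ell^2$-stability is secured the Moore--Penrose construction above produces the left inverse without change.
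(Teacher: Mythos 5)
Your proof is correct and follows essentially the same route as the paper: the easy direction via boundedness of $A$ and $B$ on $\ell^q_w$, and the converse via Theorem \ref{beurlingstability.tm} to pass to $\ell^2$-stability, invertibility of $A^*A$ on $\ell^2$, Wiener's lemma (Theorem \ref{wienerlemmaforbeurlingalgebra.tm}) to get $(A^*A)^{-1}\in{\mathcal B}(\Zd,\Zd)$, and the left inverse $B=(A^*A)^{-1}A^*$. Your caveat about the endpoint $q=\infty$ is well taken -- the paper's own argument likewise only treats $1\le q<\infty$ (consistent with Corollary \ref{leftinverse.cor1}), so nothing is missing relative to what the paper actually proves.
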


\section{A  class of infinite matrices}
\label{class.section}

In this section, we introduce a class of infinite matrices with off-diagonal decay,
which includes the class ${\mathcal B}(\Zd, \Zd)$ in the Introduction as a special case.

\bigskip

A {\em weight matrix on $\Zd\times \Zd$}, or a {\em weight matrix} for brevity, is  a positive matrix  $u:=(u(i,j))_{i,j\in \Zd}$ with each entry not less than one, i.e.,
\begin{equation}
u(i,j)\ge 1 \quad {\rm for \ all} \ i,j\in \Zd.\end{equation}
 For $1\le p\le \infty$ and a weight matrix $u:=(u(i,j))_{i,j\in \Zd}$, define
\begin{equation}\label{beurling.def}
{\mathcal B}_{p,u}(\Zd, \Zd):=\Big\{ A:=(a(i,j))_{i,j\in \Zd}\ \Big|\  \|A\|_{{\mathcal B}_{p,u}(\Zd, \Zd)}<\infty\Big\},
\end{equation}
where
\begin{equation}\label{beurlingnorm.def}
\|A\|_{{\mathcal B}_{p,u}(\Zd, \Zd)}:=\Big\|\Big(\sup_{|i-j|_\infty\ge |k|_\infty} |a(i,j)| u(i,j)\Big)_{k\in \Zd}\Big\|_{p}.
\end{equation}

If
 $p=1$ and  $u\equiv 1$ (i.e., all entries of the weight matrix $u$ are equal to $1$),
  then
  \begin{equation}{\mathcal B}_{p,u}(\Zd, \Zd)={\mathcal B}(\Zd, \Zd)\quad {\rm  and}\quad \|\cdot\|_{{\mathcal B}_{p,u}(\Zd, \Zd)}=\|\cdot\|_{{\mathcal B}(\Zd, \Zd)}.\end{equation}
In this paper,
we  use
  ${\mathcal B}_{p,u}, {\mathcal B}, \|\cdot\|_{{\mathcal B}_{p,u}}, \|\cdot\|_{{\mathcal B}}$
instead of  ${\mathcal B}_{p,u}(\Zd, \Zd), {\mathcal B}(\Zd, \Zd)$,  $ \|\cdot\|_{{\mathcal B}_{p,u}(\Zd, \Zd)}, \|\cdot\|_{{\mathcal B}(\Zd, \Zd)}
$ for brevity.

\begin{rem} {\rm Let $1\le p\le \infty$ and $u$ be a weighted matrix.  Define
the  {\em Gr\"ochenig-Schur class}  ${\mathcal S}_{p,u}(\Zd, \Zd)$ of infinite matrices
  by
\begin{equation}\label{schur.def}
{\mathcal S}_{p,u}(\Zd, \Zd):=\Big\{ A:=(a(i,j))_{i,j\in \Zd}\ \Big|\  \|A\|_{{\mathcal S}_{p,u}(\Zd, \Zd)}<\infty\Big\},
\end{equation}
where
\begin{equation}\label{schurnorm.def}
\|A\|_{{\mathcal S}_{p,u}(\Zd, \Zd)}:=\max\Big(
\sup_{i\in \Zd} \big\|\big(a(i,j) u(i,j)\big)_{j\in \Zd}\big\|_{p},
\sup_{j\in \Zd} \big\|\big(a(i,j) u(i,j)\big)_{i\in \Zd}\big\|_{p}\Big)
\end{equation}
\cite{gltams06, schur11, suncasp05, suntams07}.
For $p=1$, the  class ${\mathcal S}_{1,u}(\Zd, \Zd)$ were introduced by Schur \cite{schur11}  for  weight matrices $u:=(w(i)/w(j))_{i,j\in \Zd}$
generated by  positive sequences $w:=(w(i))_{i\in \Zd}$,
and
 by Gr\"ochenig and Leinert \cite{gltams06} for weight matrices
 $u:=(v(i-j))_{i,j\in \Zd}$ associated with  positive  functions $v$ on $\Rd$.
 For $1\le p\le \infty$, the  class ${\mathcal S}_{p,u}(\Zd, \Zd)$ was introduced by Sun
 for polynomial weights $u:=((1+|i-j|_\infty)^\alpha)_{i,j\in \Zd}$ with $\alpha> d(1-1/p)$ in \cite{suncasp05} and for any weighted matrix $u$ in
  \cite{suntams07}.
 From the above definition of the Gr\"ochenig-Schur class ${\mathcal S}_{p,u}(\Zd, \Zd)$, the following inclusion follows:
 \begin{equation} \label{bsinclusion.eq}
 {\mathcal B}_{p,u}(\Zd, \Zd)\subset {\mathcal S}_{p,u}(\Zd, \Zd)
 \end{equation}
 for any $1\le p\le \infty$ and  for any weight matrix $u$. }\end{rem}

 \begin{rem} {\rm  Let $1\le p\le \infty$ and $u$ be a weighted matrix.
Define
  the {\em
 Gohberg-Baskakov-Sj\"ostrand class} ${\mathcal C}_{p,u}(\Zd, \Zd)$
of infinite matrices by
\begin{equation}\label{gbs.def}
{\mathcal C}_{p,u}(\Zd, \Zd):=\Big\{ A:=(a(i,j))_{i,j\in \Zd}\ \Big|\  \|A\|_{{\mathcal C}_{p,u}(\Zd, \Zd)}<\infty\Big\},
\end{equation}
where
\begin{equation}\label{bksnorm.def}
\|A\|_{{\mathcal C}_{p,u}(\Zd, \Zd)}:=\Big\|\Big(\sup_{i-j=k} \big(|a(i,j)| u(i,j)\big)\Big)_{k\in \Zd}\Big\|_{p}
\end{equation}
\cite{baskakov90, gkwieot89, gltams06, graif08, kurbatov90, sjostrand94, suntams07}.
For $p=1$ and the trivial weight matrix $u_0$ (i.e., $u_0(i,j)=1$ for all $i,j\in \Zd$), the class ${\mathcal C}_{1,u_0}(\Zd, \Zd)$
was introduced by Gohberg, Kaashoek, and Woerdeman  \cite{gkwieot89} as a
generalization of  the class of Toeplitz matrices associated with summable sequences. It was reintroduced by Sj\"ostrand \cite{sjostrand94} in considering  algebra of pseudo-differential operators.
For $p=1$ and  nontrivial weight matrices $u:=(v(i-j))_{i,j\in \Zd}$ associated with  positive functions $v$ on $\Rd$, the  class ${\mathcal C}_{1,u}(\Zd, \Zd)$ was introduced and studied  by
Baskakov \cite{baskakov90} and Kurbatov  \cite{kurbatov90}  independently, see also \cite{gltams06}.
The above definition of the  Gohberg-Baskakov-Sj\"ostrand class ${\mathcal C}_{p,u}(\Zd, \Zd)$ is given by Sun \cite{suntams07} for any $1\le p\le \infty$ and any  weight matrix $u$.
From the  definition of the  Gohberg-Baskakov-Sj\"ostrand class ${\mathcal C}_{p,u}(\Zd, \Zd)$,
 we have  the following inclusion:
\begin{equation}\label{bcinclusion.eq}
{\mathcal B}_{p,u}(\Zd, \Zd)\subset {\mathcal C}_{p,u}(\Zd, \Zd)\end{equation}
for any $1\le p\le \infty$ and  for any weight matrix $u$.
}\end{rem}

\begin{rem}{\rm
The  inclusions \eqref{bsinclusion.eq} and \eqref{bcinclusion.eq}
become  equalities for $p=\infty$, i.e.,
\begin{equation}
{\mathcal B}_{\infty,u}(\Zd, \Zd)= {\mathcal C}_{\infty,u}(\Zd, \Zd)= {\mathcal S}_{\infty,u}(\Zd, \Zd)=:{\mathcal J}_{u}(\Zd, \Zd)\end{equation}
The class ${\mathcal J}_{u}(\Zd, \Zd)$  of infinite matrices
 is usually known as the {\em Jaffard class}, \cite{baskakov90,  dahlkejat09, grochenigklotz10,  gltams06, jaffard90, suncasp05, suntams07}. The Jaffard class ${\mathcal J}_u(\Zd, \Zd)$ with polynomial weight $u:=((1+|i-j|)^\alpha)_{i,j\in \Zd}$
 was introduced by Jaffard  \cite{jaffard90} in considering wavelets on an open domain. The Jaffard class ${\mathcal J}_u(\Zd, \Zd)$ with  weight matrices  $u:=(v(i-j))_{i,j\in \Zd}$ associated with  positive  functions $v$ on $\Rd$  was introduced  by Baskakov
 \cite{baskakov90} independently,  and later applied nontrivially in the study of
 localization of   frames \cite{gltams06}, adaptive computation \cite{dahlkejat09}, and nonuniform sampling \cite{sunsiam06}.
 }\end{rem}

\bigskip

For the class ${\mathcal B}_{p,u}$ of infinite matrices, we have the following
 proposition.

\begin{prop} \label{banachalgebra.prop}
Let $\alpha\in \CC$, $1\le p\le \infty$, $u$ be a weight matrix,
 and let
$A:=((a(i,j))_{i,j\in \Zd}$ and $B:=(b(i,j))_{i,j\in \Zd}$ belong to ${\mathcal B}_{p,u}$.
Then
\begin{itemize}
\item[{(i)}] 
    $\|A+B\|_{{\mathcal B}_{p,u}}\le \|A\|_{{\mathcal B}_{p,u}}+\|B\|_{{\mathcal B}_{p,u}}$.

\item[{(ii)}] $\|\alpha A\|_{{\mathcal B}_{p,u}}= |\alpha| \|A\|_{{\mathcal B}_{p,u}}$.

    \item[{(iii)}] $\| A^*\|_{{\mathcal B}_{p,u}}=  \|A\|_{{\mathcal B}_{p,u}}$
   where $A^*:=\big(\overline{a(j,i)}\big)_{i,j\in \Zd}$
   is the conjugate transpose of the matrix $A$.

    \item[{(iv)}] $\| A\|_{{\mathcal B}_{p,u}}\le \| B\|_{{\mathcal B}_{p,u}}$
    if  $|A|\le |B|$, i.e., $|a(i,j)|\le |b(i,j)|$ for all $ i,j\in \Zd$.

\end{itemize}

\end{prop}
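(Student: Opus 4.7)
The plan is to verify each of the four items directly from definition \eqref{beurlingnorm.def}, exploiting two layers of elementary structure. First, the map $A \mapsto \bigl(\sup_{|i-j|_\infty \ge |k|_\infty}|a(i,j)|u(i,j)\bigr)_{k\in\Zd}$ sends matrices to nonnegative sequences in a way that is subadditive, absolutely homogeneous in the scalar argument, and monotone under entrywise comparison. Second, the $\ell^p$ norm on $\Zd$ enjoys the same three properties. Stacking these two observations gives (i), (ii), and (iv) at once, without any structural assumption on the weight matrix $u$.

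Concretely, for (i) I would fix $k$ and bound $|a(i,j)+b(i,j)|u(i,j) \le |a(i,j)|u(i,j)+|b(i,j)|u(i,j)$, push the sup through to get pointwise subadditivity of the sup-sequence in $k$, and then apply the triangle inequality of $\ell^p$. For (ii), $|\alpha a(i,j)|u(i,j) = |\alpha|\cdot|a(i,j)|u(i,j)$, so $|\alpha|$ factors cleanly out of the sup and then out of the $\ell^p$ norm. For (iv), multiplying $|a(i,j)|\le|b(i,j)|$ by the nonnegative weight $u(i,j)$ and taking sup over the same index set preserves the inequality, after which monotonicity of $\ell^p$ on nonnegative sequences finishes the job.

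Item (iii) is the only one requiring a nontrivial remark. Since $|\overline{a(j,i)}| = |a(j,i)|$, swapping the dummy indices $i\leftrightarrow j$ in the supremum (which is legal because the constraint $|i-j|_\infty \ge |k|_\infty$ is symmetric in $i,j$) gives
\[
\sup_{|i-j|_\infty \ge |k|_\infty}|a^*(i,j)|\,u(i,j) \;=\; \sup_{|i-j|_\infty \ge |k|_\infty}|a(i,j)|\,u(j,i).
\]
Taking $\ell^p$ norms in $k$ then yields $\|A^*\|_{{\mathcal B}_{p,u}}=\|A\|_{{\mathcal B}_{p,u}}$ as soon as one knows $u(j,i)=u(i,j)$. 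This symmetry of the weight matrix is the one step I would flag as the main subtlety: the examples of interest (translation-invariant weights $u(i,j)=v(i-j)$ with even $v$, or the trivial weight) do satisfy it, but the general definition in the paper only requires $u(i,j)\ge 1$. Depending on convention, one either records the implicit symmetry hypothesis on $u$, or restates (iii) in the symmetric form $\|A^*\|_{{\mathcal B}_{p,u^*}}=\|A\|_{{\mathcal B}_{p,u}}$ with $u^*(i,j):=u(j,i)$; either way the proof consists of the single change of variables above, and the remainder of the argument is the triangle/monotonicity template already used in (i), (ii), (iv).
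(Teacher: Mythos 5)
Your verification is correct and takes the same route as the paper, whose entire proof is the one-line assertion that all four conclusions follow directly from the definitions \eqref{beurling.def} and \eqref{beurlingnorm.def}; items (i), (ii) and (iv) are exactly the two-layer subadditivity/homogeneity/monotonicity argument you describe. Your caveat on (iii) is a genuine catch rather than a pedantic one: the paper's definition of a weight matrix requires only $u(i,j)\ge 1$ and not $u(i,j)=u(j,i)$, and without that symmetry the substitution $i\leftrightarrow j$ leaves you with $\sup_{|i-j|_\infty\ge|k|_\infty}|a(i,j)|\,u(j,i)$, which can differ from $\|A\|_{{\mathcal B}_{p,u}}$ (a one-entry matrix with an asymmetric $u$ already gives a counterexample), so (iii) as stated implicitly assumes a symmetric weight --- an assumption satisfied by every weight the paper actually uses, such as $u(i,j)=(1+|i-j|_\infty)^\alpha$ or $\exp(|i-j|_\infty^\delta)$, but not recorded in the definition.
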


All conclusions in the above  proposition follow directly from \eqref{beurling.def} and \eqref{beurlingnorm.def}.
From the conclusions (i) and (ii) of  the above proposition,  we see that
$\|\cdot\|_{{\mathcal B}_{p,u}}$ is a norm on the class ${\mathcal B}_{p,u}$ of infinite matrices. The
properties in the conclusion (iv) is usually  known as the {\em solidness} of the matrix norm $\|\cdot\|_{{\mathcal B}_{p,u}}$.

\section{Algebraic properties}
\label{algebra.section}

In  this section, we   establish
 some algebraic properties for the
class ${\mathcal B}_{p,u}$ of infinite matrices  and  give a proof of Theorem \ref{beurlingalgebra.tm}.

\bigskip

Let us first recall  the concept of
 a  $p$-submultiplicative weight matrix $u$
\cite{gltams06, suntams07, sunacha08}.
For $1\le p\le \infty$, a weight matrix  $u:=(u(i,j))_{i,j\in \Zd}$  is said to {\em $p$-submultiplicative} if
there exists  another  weight matrix $v:=(v(i,j))_{i,j\in \Zd}$  such that
\begin{equation}\label{matrix.condition0}
v(i,j)\ge 1 \quad  {\rm  for\  all}  \  i,j\in \Zd,
\end{equation}
\begin{equation}
\label{matrix.condition1}
u(i,j)\le u(i, k) v(k,j)+ v(i,k) u(k,j)\quad  {\rm  for\  all}  \ i,j, k\in \Zd,
\end{equation}
and
\begin{equation}\label{matrix.condition2}
C_{p}(v,u):=\Big\| \Big(\sup_{|i-j|_\infty\ge |k|_\infty} \big(v(i,j) (u(i,j))^{-1}\big)\Big)\Big\|_{p/(p-1)}<\infty.
\end{equation}
For  $p=1$,  we simply say that
a weight matrix is {\em submultiplicative} instead of $1$-submultiplicative.
We call the weight matrix $v$ satisfying \eqref{matrix.condition0},
\eqref{matrix.condition1} and \eqref{matrix.condition2}
 a {\em companion weight matrix} of the $p$-submultiplicative weight matrix $u$. Denote by $C(u)$ the set of all  companion weights of a $p$-submultiplicative weight matrix $u$,
and define the {\em $p$-submultiplicative bound} $M_p(u)$ by
\begin{equation}\label{matrix.condition3b}
M_p(u):=\inf_{v\in C(u)} C_p(v,u).
\end{equation}
One may verify that $C(u)$ is a convex set and the infinimum of $C_p(v,u)$ in the set $C(u)$ can be attained for
some companion weight matrix $v$. So from now on, except stated explicitly,
we {\bf always} assume that the companion weight $v$ of a
$p$-submultiplicative weight matrix $u$ is the one satisfying
\begin{equation}\label{matrix.condition3}
M_p(u)= C_p(v,u).
\end{equation}

\begin{rem}{\rm
From the   definitions of $p$-submultiplicative weight matrices on $\Zd\times \Zd$,
we  have the following:
\begin{itemize}
\item[{(i)}] A $p$-submultiplicative weight matrix is $q$-submultiplicative for all $1\le q\le p$.

\item[{(ii)}]
A necessary condition for a weight matrix
$u:=(u(i,j))_{i,j\in \Zd}$ to be $p$-submultiplicative is
$u(i,j)\le C  u(i,k) u(k,j)  $
for all $i,j,k\in \Zd$ and for some positive constant $C$. When $p=1$, the above necessary condition is also a sufficient condition \cite{gltams06}.

\item[{(iii)}] Let $1\le p\le \infty, \delta\in (0,1)$, and let $\alpha$ be a  number with the property that
$\alpha> d-d/p$ if $1< p\le \infty$,
  and $\alpha\ge 0$ if $p=1$. Then the  Toeplitz matrices
$ p_\alpha:=\big((1+|i-j|_\infty)^\alpha\big)_{i,j\in \Zd}$ 
 generated by  the polynomial weight $(1+|x|_\infty)^\alpha$,
and
$e_\delta:=\big(\exp(|i-j|_\infty^\delta)\big)_{i,j\in \Zd}$ 
  generated by the sub-exponential
  weight $\exp(|x|_\infty^\delta)$, are $p$-submultiplicative \cite{suntams07}.

\end{itemize}
}\end{rem}

Now we state the main result of this section, an extension of Theorem \ref{beurlingalgebra.tm}.

\begin{thm}\label{banachalgebra.tm}
Let $1\le p, q\le \infty$, $u$ be a $p$-submultiplicative weight matrix  with  the $p$-submultiplicative bound $M_p(u)$, and let  $w$ be a discrete  $A_q$-weight with the $A_q$-bound $A_q(w)$. Then the following statements hold.
\begin{itemize}

\item[{(i)}]  If  $v$ is a companion weight matrix of the $p$-submultiplicative weight matrix $u$, then
\begin{equation}\label{banachalgebra.tm.eq0}
\|AB\|_{{\mathcal B}_{p,u}}\le
 2^{2/p}  5^{(d-1)/p}
 \big( \|A\|_{{\mathcal B}_{p,u}}\|B\|_{{\mathcal B}_{1,v}}
 +\|A\|_{{\mathcal B}_{1,v}}\|B\|_{{\mathcal B}_{p,u}}\big)
\end{equation}
for all $A, B\in {\mathcal B}_{p,u}$.

\item[{(ii)}]
    ${\mathcal B}_{p,u}$  is (and hence ${\mathcal B}$ is also) an algebra. Moreover
\begin{equation}\label{banachalgebra.tm.eq1}
\|AB\|_{{\mathcal B}_{p,u}}\le  2^{1+2/p}  5^{(d-1)/p}  M_p(u) \|A\|_{{\mathcal B}_{p,u}}\|B\|_{{\mathcal B}_{p,u}}\quad {\rm for \ all} \  A, B\in {\mathcal B}_{p,u}.
\end{equation}

\item[{(iii)}] ${\mathcal B}_{p,u}$ is a subalgebra of ${\mathcal B}$. Moreover
    \begin{equation}\label{banachalgebra.tm.eq2}
    \|A\|_{{\mathcal B}}\le M_p(u)  \|A\|_{{\mathcal B}_{p,u}}\quad
    {\rm for \ all}
\ A\in {\mathcal B}_{p,u}.
    \end{equation}

\item[{(iv)}]
    ${\mathcal B}_{p,u}$ is  (and hence ${\mathcal B}$ is also)  a subalgebra of ${\mathcal B}(\ell^q_w)$. Moreover
    \begin{equation}\label{banachalgebra.tm.eq3}
    \|Ac\|_{q,w} \le 2^{2d}3^{d/q} (A_q(w))^{1/q} M_p(u)  \|A\|_{{\mathcal B}_{p,u}} 
    \|c\|_{q,w}
       \end{equation}
  for  all $A\in {\mathcal B}_{p,u}$ and $\ c\in \ell^q_w$.
\end{itemize}
\end{thm}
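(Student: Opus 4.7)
The plan is to prove the four parts in order, with (i) as the master submultiplicativity estimate and (ii)--(iv) following by successive specialization via H\"older's inequality.

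For (i), I apply the companion-weight pointwise inequality $u(i,j)\le u(i,k)v(k,j)+v(i,k)u(k,j)$ inside the matrix product to obtain
\[
|(AB)(i,j)|\,u(i,j)\le \sum_{k}|a(i,k)|u(i,k)|b(k,j)|v(k,j)+\sum_{k}|a(i,k)|v(i,k)|b(k,j)|u(k,j).
\]
Setting $k'=i-k$ and $m=i-j$, each sum rewrites as a discrete convolution $\sum_{k'}\tilde\alpha_\bullet(k')\,\tilde\beta_\circ(m-k')$ in the row-sup quantities $\tilde\alpha_\bullet(\ell):=\sup_{i'-j'=\ell}|a(i',j')|\,\bullet(i',j')$ with $\bullet,\circ\in\{u,v\}$ (and similarly for $\tilde\beta$). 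To bound $\sup_{|m|_\infty\ge|n|_\infty}$, I split each $k'$-sum according to whether $|k'|_\infty>|m-k'|_\infty$ or the reverse: in either branch one of the two indices has infinity-norm at least $|m|_\infty/2\ge|n|_\infty/2$, so the corresponding factor is controlled by a decreasing quantity like $\alpha_u(\lceil|n|_\infty/2\rceil)$ or $\beta_v(\lceil|n|_\infty/2\rceil)$, while the other factor summed over $k'$ is at most $\|\cdot\|_{\mathcal{C}_{1,\bullet}}\le\|\cdot\|_{\mathcal{B}_{1,\bullet}}$ via the inclusion \eqref{bcinclusion.eq}. Taking the $\ell^p$-norm over $n$ then yields the stated bound; the constant $5^{(d-1)/p}$ arises from the lattice count $|\{n\in\Zd:\lceil|n|_\infty/2\rceil=k\}|\le 2\cdot 5^{d-1}|\{n\in\Zd:|n|_\infty=k\}|$ (worst at $k=1$), and $2^{2/p}$ from aggregating the two branches of each split.

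For (ii), I feed H\"older into (i): since $M_p(u)=C_p(v,u)$ is by definition the $\ell^{p/(p-1)}$-norm of $\sup_{|i-j|_\infty\ge|k|_\infty}v(i,j)/u(i,j)$, factoring $|b(i,j)|v(i,j)=(|b(i,j)|u(i,j))\cdot(v(i,j)/u(i,j))$ gives $\|B\|_{\mathcal{B}_{1,v}}\le M_p(u)\|B\|_{\mathcal{B}_{p,u}}$ (and the same for $A$), and substitution into \eqref{banachalgebra.tm.eq0} yields \eqref{banachalgebra.tm.eq1}. For (iii), the same H\"older trick with $1/u\le v/u$ (using $v\ge 1$) in place of $v/u$ produces $\|A\|_{\mathcal{B}}\le M_p(u)\|A\|_{\mathcal{B}_{p,u}}$.

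For (iv), by (iii) it suffices to establish $\mathcal{B}\subset\mathcal{B}(\ell^q_w)$ with the stated dependence on $A_q(w)$. Using the envelope characterization in the introduction, write $|a(i,j)|\le b(i-j)$ with $b(k)=h(|k|_\infty)$ summable, radial, radially decreasing, and $\|b\|_1=\|A\|_{\mathcal{B}}$. The layer-cake representation $b=\sum_{N\ge 0}(h(N)-h(N+1))\chi_{[-N,N]^d}$ then gives the pointwise inequality $(b*|c|)(i)\le\|b\|_1\,M|c|(i)$, where $M$ is the discrete maximal operator defined in the introduction. For $1<q<\infty$, the $\ell^q_w$-strong-type boundedness of $M$ (with operator-norm controlled by $A_q(w)$) closes the argument. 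For $q=1$ strong-type fails, but the $A_1$-condition supplies the pointwise bound $(b*w)(j)\le A_1(w)\|b\|_1 w(j)$, whence Fubini yields $\|b*|c|\|_{1,w}=\sum_j|c(j)|(b*w)(j)\le A_1(w)\|b\|_1\|c\|_{1,w}$.

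The main obstacle is the bookkeeping in (i): a naive execution of the split produces up to four cross-terms $\|A\|_{\mathcal{B}_{p,u}}\|B\|_{\mathcal{B}_{1,v}}$, $\|A\|_{\mathcal{B}_{1,u}}\|B\|_{\mathcal{B}_{p,v}}$, $\|A\|_{\mathcal{B}_{p,v}}\|B\|_{\mathcal{B}_{1,u}}$, and $\|A\|_{\mathcal{B}_{1,v}}\|B\|_{\mathcal{B}_{p,u}}$, and one must organize the decomposition so that only the two symmetric pairs claimed actually appear (or absorb the two ``mixed'' cross-terms by a judicious reassignment of $u$ and $v$ in the two companion terms); the remaining combinatorial factors $2^{2/p}5^{(d-1)/p}$ are then precise but routine counting.
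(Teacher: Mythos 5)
Your parts (ii) and (iii) follow the paper's derivation exactly, and your part (iv) is a correct alternative route: the paper estimates $\|Ac\|_{q,w}$ directly by a dyadic decomposition together with the $A_q$ condition in the form \eqref{discreteaqweight.eq4}, whereas you dominate $|a(i,j)|$ by a radial, radially decreasing, summable envelope and invoke the pointwise domination of the convolution by the discrete maximal function plus its strong $(\ell^q_w,\ell^q_w)$ boundedness (with the Fubini/$A_1$ argument for $q=1$). That proves the boundedness claim, though with the maximal-operator norm in place of the explicit factor $2^{2d}3^{d/q}(A_q(w))^{1/q}$ appearing in \eqref{banachalgebra.tm.eq3}.

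The genuine gap is in part (i), and it is precisely the ``obstacle'' you flag without resolving. Your symmetric convolution split of $\sum_k|a(i,k)|u(i,k)\,|b(k,j)|v(k,j)$ according to which of $|i-k|_\infty$, $|k-j|_\infty$ is at least $|i-j|_\infty/2$ produces, in the branch where the $b$-factor is pulled out as a supremum, the leftover $\sum_{k}|a(i,k)|u(i,k)$, a quantity controlled only by $\|A\|_{{\mathcal C}_{1,u}}\le\|A\|_{{\mathcal B}_{1,u}}$ --- an $\ell^1$-type norm in the \emph{strong} weight $u$. For $p>1$ this is infinite for a generic $A\in{\mathcal B}_{p,u}$ (take $\sup_{|i-j|_\infty\ge n}|a|u\sim n^{-d/p}(\log n)^{-2/p}$: the $p$-th powers are summable over $\Zd$ but the first powers are not), and no reassignment of $u$ and $v$ between the two companion terms removes it, since $u$ is not dominated by any multiple of $v$ (indeed \eqref{matrix.condition2} forces $v/u$ to be small off the diagonal). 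The paper avoids this by an asymmetric step \emph{before} splitting: apply H\"older with respect to the measure $|b(k,j)|v(k,j)$,
\begin{equation*}
\sum_{k}|a(i,k)|u(i,k)|b(k,j)|v(k,j)\le\Big(\sum_{k}(|a(i,k)|u(i,k))^p|b(k,j)|v(k,j)\Big)^{1/p}\Big(\sum_{k}|b(k,j)|v(k,j)\Big)^{(p-1)/p},
\end{equation*}
and only then split the first factor by the location of the large index; the two branches then give $\|B\|_{{\mathcal B}_{1,v}}\cdot\sup(|a|u)^p$ and $\|A\|_{{\mathcal B}_{p,u}}^p\cdot\sup(|b|v)$, so that after taking $\ell^p$ norms and using \eqref{banachalgebra.pf.eq6} with $N=2$ only the combinations $\|A\|_{{\mathcal B}_{p,u}}\|B\|_{{\mathcal B}_{1,v}}$ and $\|A\|_{{\mathcal B}_{1,v}}\|B\|_{{\mathcal B}_{p,u}}$ survive. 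This weighted H\"older step is a different mechanism, not bookkeeping, and without it your proof of (i) --- and hence of (ii) and (iii), which you derive from it --- does not close.
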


Before we  give the proof of the above theorem, let us next have some remarks on the
unital Banach algebra property of the  algebra ${\mathcal B}_{p,u}$,
on the equality of  spectral radii in the algebras ${\mathcal B}_{p,u}$  and ${\mathcal B}_{1, v}$, and on the inclusion ${\mathcal B}_{p,u}\subset {\mathcal B}(\ell^q_w)$.

\begin{rem}\label{banachalgebra.rem1}
 {\rm For $1\le p\le \infty$ and   a $p$-submultiplicative weight matrix $u:=(u(i,j))_{i,j\in \Zd}$, following the standard procedure \cite{gelfandbook, Naimarkbook} we define
$
\|A\|_{{\mathcal B}_{p,u}}^\prime:=\sup_{\|B\|_{{\mathcal B}_{p,u}}=1}
\|AB\|_{{\mathcal B}_{p,u}}$ for $A\in {\mathcal B}_{p,u}$.
  Then
 \begin{equation}\label{banachalgebra.rem1.eq2}
\|AB\|_{{\mathcal B}_{p,u}}^\prime\le \|A\|_{{\mathcal B}_{p,u}}^\prime\|B\|_{{\mathcal B}_{p,u}}^\prime\quad{\rm for \ all} \ A,B\in {\mathcal B}_{p,u}.
\end{equation}
 If
the weight matrix $u$  further satisfies
\begin{equation}\label{banachalgebra.rem1.eq3}
M:=\sup_{i\in \Zd} u(i,i)<\infty,
\end{equation}
then the identity matrix $I$ belongs to ${\mathcal B}_{p,u}$,
 and the norms $\|\cdot\|_{{\mathcal B}_{p,u}}$ and
 $\|\cdot\|_{{\mathcal B}_{p,u}}^\prime$ on ${\mathcal B}_{p,u}$ are equivalent to each other, because
\begin{equation*}\label{banachalgebra.rem1.eq4}
M^{-1} \|A\|_{{\mathcal B}_{p,u}}\le
\|A\|_{{\mathcal B}_{p,u}}^\prime\le 2^{1+2/p}  5^{(d-1)/p}  M_p(u)\|A\|_{{\mathcal B}_{p,u}}
\quad {\rm for \ all} \ A\in {\mathcal B}_{p,u}\end{equation*}
by the conclusion (ii) of Theorem \ref{banachalgebra.tm}
and the fact that $\|I\|_{{\mathcal B}_{p,u}}=M$.
Therefore if $1\le p\le \infty$ and $u$ is a $p$-submultiplicative weight matrix
satisfying \eqref{banachalgebra.rem1.eq3}, then
the class ${\mathcal B}_{p,u}$ of infinite matrices endowed with the norm
$\|\cdot\|_{{\mathcal B}_{p,u}}^\prime$ becomes a  unital Banach algebra.
}\end{rem}

\begin{rem}\label{banachalgebra.rem2}{\rm Let $1\le p\le \infty$, $u$ be
a $p$-submultiplicative weight matrix, and $v$ be its companion weight matrix.
If the companion weight matrix $v$ is submultiplicative, then both
${\mathcal B}_{p,u}$ and ${\mathcal B}_{1,v}$ are algebras by the conclusion (ii)
of Theorem \ref{banachalgebra.tm},
and ${\mathcal B}_{p,u}$  is a subalgebra of ${\mathcal B}_{1,v}$
since
\begin{equation}\label{banachalgebra.rem2.eq1}
\|A\|_{{\mathcal B}_{1,v}}\le C_p(v,u) \|A\|_{{\mathcal B}_{p,u}}\quad {\rm for \ all} \ A\in {\mathcal B}_{p,u}.
\end{equation}
Applying \eqref{banachalgebra.rem2.eq1} with $A$ replaced by $A^n$ and then
taking $n$-th roots and the limit as $n\to\infty$ yields
\begin{equation*}\label{banachalgebra.rem2.eq2}
\rho_{{\mathcal B}_{1,v}}(A):=\limsup_{n\to \infty}
(\|A^n\|_{{\mathcal B}_{1,v}})^{1/n} \le \limsup_{n\to \infty}
(\|A^n\|_{{\mathcal B}_{p,u}})^{1/n}=:\rho_{{\mathcal B}_{p,u}}(A).
\end{equation*}
Applying
the  conclusion (i)  of Theorem \ref{banachalgebra.tm} gives
\begin{equation} \label{banachalgebra.rem2.eq3}
\|A^{2n}\|_{{\mathcal B}_{p,u}}
\le  2^{1+2/p} 5^{(d-1)/p}\|A^n\|_{{\mathcal B}_{p,u}}\|A^n\|_{{\mathcal B}_{1,v}},
\end{equation}
and then taking $n$-th roots  and   letting $n\to\infty$ lead to
the inequality
$\rho_{{\mathcal B}_{p,u}}(A)
\le \rho_{{\mathcal B}_{1,v}}(A)$.
This concludes that if  $u$ is
a $p$-submultiplicative weight matrix and  its companion weight matrix
$v$ is submultiplicative, then
 the spectral radii $\rho_{{\mathcal B}_{p,u}}(A)$
 and
 $\rho_{{\mathcal B}_{1,v}}(A)$
 are the same for any $A\in {\mathcal B}_{p,u}$, i.e.,
$\rho_{{\mathcal B}_{1,v}}(A)= \rho_{{\mathcal B}_{p,u}}(A)$  for  all $A\in {\mathcal B}_{p,u}$.
The above procedure to
establish the equality of spectral radii in the algebras ${\mathcal B}_{p,u}$ and ${\mathcal B}_{1,v}$ from  the inequality in  the conclusion (i) of Theorem \ref{banachalgebra.tm}
 is  known as {\em  Brandenburg's trick} \cite{brandenburg75, grochenigklotz10}.
 Another technique to
prove  the equality of spectral radii in two algebras ${\mathcal A}_1$ and
 ${\mathcal A}_2$ with the same identity is by showing that
  \begin{equation}\label{banachalgebra.rem2.eq6}
  \|A\|_{{\mathcal A}_2}\le C \|A\|_{{\mathcal A}_1}
  \end{equation}
 and
 \begin{equation}\label{banachalgebra.rem2.eq7}
 \|A^2\|_{{\mathcal A}_1}\le C \|A\|_{{\mathcal A}_1}^{1+\theta} \|A\|_{{\mathcal A}_2}^{1-\theta}\quad {\rm for \ all} \ A\in {\mathcal A}_1,
 \end{equation}
 where  $\|\cdot\|_{{\mathcal A}_1}$ and
 $\|\cdot\|_{{\mathcal A}_2}$ are norms in the algebra ${\mathcal A}_1$ and ${\mathcal A}_2$ respectively, and where $C\in (0, \infty)$ and $\theta\in [0,1)$ are  constants
 independent of $A\in {\mathcal A}$. The estimates in \eqref{banachalgebra.rem2.eq6} and \eqref{banachalgebra.rem2.eq7}  for ${\mathcal A}_2={\mathcal B}(\ell^2)$ and
 ${\mathcal A}_1={\mathcal S}_{p,u}(\Zd, \Zd)$ or ${\mathcal C}_{p,u}(\Zd, \Zd)$ are established in
 \cite{suncasp05, suntams07}, while the ones for ${\mathcal A}_2={\mathcal B}(\ell^2)$ and
 ${\mathcal A}_1={\mathcal B}_{p,u}, 1\le p\le \infty$, are given in Lemma \ref{wienerlemmaell2.lem2}.
}\end{rem}

\begin{rem} {\rm
 The conclusion (iv) of Theorem \ref{banachalgebra.tm}  about the boundedness of  an infinite matrix in ${\mathcal B}$  on the weight sequence space $\ell^q_w$ is a simplified discrete version of  the second conclusion  in \cite[Proposition 2 of Chapter 10]{steinbook93}.
The reader may refer to \cite[Lemma 3.1]{graif08} for a general result on the  boundedness of an infinite matrix on  sequence spaces.
}\end{rem}

We conclude this section  by  giving the proof of  Theorem \ref{banachalgebra.tm}.

\begin{proof}[Proof of Theorem \ref{banachalgebra.tm}] 
 {\bf (i)}:\quad
Let  $1\le p\le \infty$, $u$ be a $p$-submultiplicative weight matrix, and let
$v$ be a companion weight matrix of the weight matrix $u$. Take
$A:=(a(i,j))_{i,j\in \Zd}$ and $B:=(b(i,j))_{i,j\in \Zd}$  in  ${\mathcal B}_{p,u}$, and write $AB:=(c(i,j))_{i,j\in \Zd}$.
Then  it follows from \eqref{matrix.condition1} that
\begin{eqnarray}\label{banachalgebra.pf.eq1}  |c(i,j)| u(i,j)   & = &
\Big| \sum_{k\in \Zd} a(i,k) b(k,j)\Big| u(i,j)\nonumber\\
& \le  &
 \sum_{k\in \Zd} |a(i,k)| u(i,k)  |b(k,j)| v(k,j)\nonumber\\
& & +
\sum_{k\in \Zd} |a(i,k)| v(i,k) |b(k,j)| u(k,j)
\quad {\rm for\ all} \  i,j\in \Zd.
\end{eqnarray}
For $1\le p<\infty$, we obtain from \eqref{banachalgebra.pf.eq1} that
\begin{eqnarray*} 
 & & \sum_{k\in \Zd} |a(i,k)| u(i,k)  |b(k,j)| v(k,j)\nonumber\\
& \le & \Big(\sum_{k'\in \Zd} (|a(i,k')| u(i,k'))^p  |b(k',j)| v(k',j)\Big)^{1/p}\nonumber\\
& & \quad \times \Big(\sum_{k^{\prime\prime}\in \Zd} |b(k^{\prime\prime},j)|v(k^{\prime\prime},j)\Big)^{(p-1)/p}\nonumber\\
& \le &
\big( \|B\|_{{\mathcal B}_{1,v}}\big)^{(p-1)/p} \Big\{
\Big(\sum_{k'\in \Zd}  |b(k',j)| v(k',j)\Big)
\nonumber\\
& & \qquad\quad\times
\Big(\sup_{|i'-j'|_\infty\ge |i-j|_\infty/2}
\big(|a(i',j')| u(i',j')\big)^p \Big)
\nonumber\\
& & \qquad  +
 \Big(\sup_{|i'-j'|_\infty\ge |i-j|_\infty/2}
\big(|b(i',j')| v(i',j')\big) \Big)\nonumber\\
& & \qquad \quad \times
\Big(\sum_{k'\in \Zd}  \big(|a(i, k')| u(i, k')\big)^p\Big)
\Big\}^{1/p}\nonumber\\
& \le & \big(\|B\|_{{\mathcal B}_{1,v}}\big)^{(p-1)/p}\Big\{ \|B\|_{{\mathcal B}_{1,v}}\Big(\sup_{|i'-j'|_\infty\ge |i-j|_\infty/2}
\big(|a(i',j')| u(i',j')\big)^p \Big) \nonumber\\
& & \quad  +\big( \|A\|_{{\mathcal B}_{p,u}}\big)^p
 \Big(\sup_{|i'-j'|_\infty\ge |i-j|_\infty/2}
\big(|b(i',j')| v(i',j')\big) \Big)
\Big\}^{1/p},\end{eqnarray*}
and
\begin{eqnarray*} 
& & \sum_{k\in \Zd} |a(i,k)| v(i,k) |b(k,j)| u(k,j)\\
& \le &
\big(\|A\|_{{\mathcal B}_{1,v}}\big)^{(p-1)/p}\Big\{ \|A\|_{{\mathcal B}_{1,v}}\Big(\sup_{|i'-j'|_\infty\ge |i-j|_\infty/2}
\big(|b(i',j')| u(i',j')\big)^p \Big) \nonumber\\
& & \quad  + \big(\|B\|_{{\mathcal B}_{p,u}}\big)^p
 \Big(\sup_{|i'-j'|_\infty\ge |i-j|_\infty/2}
\big(|a(i',j')| v(i',j')\big) \Big)
\Big\}^{1/p}.
\end{eqnarray*}
Combining the above two estimates with
\eqref{banachalgebra.pf.eq1} leads to
\begin{eqnarray*} 
 \|AB\|_{{\mathcal B}_{p,u}}
&= &
\Big\|\Big(\sup_{|i-j|_\infty\ge |k|_\infty} \big(|c(i,j)| u(i,j)\big)\Big)_{k\in \Zd}\Big\|_p\nonumber\\
& \le &  \big( \|B\|_{{\mathcal B}_{1,v}}\big)^{(p-1)/p} %
 \Big\{  \|B\|_{{\mathcal B}_{1,v}}
 \nonumber\\
 & & \qquad \times
 \Big\|\Big(\sup_{|i-j|_\infty\ge |k|_\infty/2}\big (|a(i',j')| u(i',j')\big)^p  \Big)_{k\in \Zd}
\Big\|_1\nonumber\\
 & &\quad  + \big(\|A\|_{{\mathcal B}_{p,u}}\big)^p
 \Big\|\Big(\sup_{|i-j|_\infty\ge |k|_\infty/2} \big(|b(i',j')| v(i',j')\big)  \Big)_{k\in \Zd}
\Big\|_1
\Big\}^{1/p}
 \nonumber\\
& & + \big(\|A\|_{{\mathcal B}_{1,v}}\big)^{(p-1)/p} \Big\{\|A\|_{{\mathcal B}_{1,v}}\nonumber\\
 & & \qquad \times
 \Big\|\Big(\sup_{|i-j|_\infty\ge |k|_\infty/2} \big(|b(i',j')| u(i',j')\big)^p  \Big)_{k\in \Zd}
\Big\|_1\nonumber\\
 & &\quad  + \big(\|B\|_{{\mathcal B}_{p,u}}\big)^p
 \Big\|\Big(\sup_{|i-j|_\infty\ge |k|_\infty/2} \big(|a(i',j')| v(i',j')\big)  \Big)_{k\in \Zd}
\Big\|_1
\Big\}^{1/p}
 \nonumber\\
 & \le &
  2^{2/p}  5^{(d-1)/p}
  \Big( \|A\|_{{\mathcal B}_{p,u}}\|B\|_{{\mathcal B}_{1,v}}+
  \|A\|_{{\mathcal B}_{1,v}}\|B\|_{{\mathcal B}_{p,u}}\Big),
  \end{eqnarray*}
  where we have used
  the fact that
  \begin{equation} \label{banachalgebra.pf.eq6}
 \Big\|\Big(\sup_{|i-j|_\infty\ge |k|_\infty/N} |a(i,j)|\Big)\Big\|_1\le
 N  (2N+1)^{d-1}  \Big\|\Big(\sup_{|i-j|_\infty\ge |k|_\infty} |a(i,j)|\Big)\Big\|_1
 \end{equation}
 for any integer $N\ge 1$ and $A:=(a(i,j))\in {\mathcal B}$.
This   proves \eqref{banachalgebra.tm.eq0} for $1\le p<\infty$.

For $p=\infty$, it follows from \eqref{banachalgebra.pf.eq1}  that
\begin{equation}\label{banachalgebra.pf.eq2}
\|AB\|_{{\mathcal B}_{\infty,u}}  \le
\|A\|_{{\mathcal B}_{\infty,u}}\|B\|_{{\mathcal B}_{1,v}}+
\|A\|_{{\mathcal B}_{1,v}}\|B\|_{{\mathcal B}_{\infty,u}}.
\end{equation}
Hence \eqref{banachalgebra.tm.eq0} for $p=\infty$ is proved.

\bigskip
   {\bf (ii)}\quad 
Let  $v$ be the companion weight matrix of the $p$-submultiplicative weight $u$ that
satisfies \eqref{matrix.condition0}--\eqref{matrix.condition2} and \eqref{matrix.condition3}. Then
\begin{equation}\label{banachalgebra.pf.eq7-a}
\|A\|_{{\mathcal B}_{1, v}}\le M_p(u)\|A\|_{{\mathcal B}_{p, u}}\quad {\rm for \ all} \ A\in {\mathcal B}_{p,u},
\end{equation}
because
\begin{eqnarray}\label{banachalgebra.pf.eq7}
  \sup_{|i-j|_\infty\ge |k|_\infty} |a(i,j)| v(i,j) 
& \le &
\Big(\sup_{|i'-j'|_\infty\ge |k|_\infty} |a(i',j')| u(i',j')\Big)\nonumber\\
& &  \times  \Big(\sup_{|i'-j'|_\infty\ge |k|_\infty} v(i',j') (u(i',j'))^{-1}\Big)
\end{eqnarray}
hold for all $k\in \Zd$.
Combining  \eqref{banachalgebra.tm.eq0} and \eqref{banachalgebra.pf.eq7-a}
proves \eqref{banachalgebra.tm.eq1}.

\bigskip
{\bf (iii)}\quad
Let  $v$ be the companion weight matrix of the $p$-submultiplicative weight $u$ that
satisfies \eqref{matrix.condition0}--\eqref{matrix.condition2} and \eqref{matrix.condition3}.
Then
\begin{equation}
\|A\|_{{\mathcal B}}\le \|A\|_{{\mathcal B}_{1, v}}\quad {\rm for \ all} \ A\in
{\mathcal B}_{1, v}
\end{equation}
by \eqref{matrix.condition0} for the weight matrix $v$.
This together with \eqref{banachalgebra.pf.eq7-a}
gives \eqref{banachalgebra.tm.eq2} and hence proves
the conclusion (iii).

\bigskip

{\bf (iv)}\quad By  (iii), it suffices to prove
\begin{equation}\label{banachalgebra.pf.eq8a}
\|Ac\|_{q,w}\le 2^{2d} 3^{d/q} (A_q(w))^{1/q} \|A\|_{\mathcal B} \|c\|_{q,w}
\end{equation}
for all $A:=(a(i,j))_{i,j\in \Zd}\in {\mathcal B}$ and $c\in \ell^q_w$.
Set $h(n):=\sup_{|i-j|_\infty\ge n} |a(i,j)|$.
Then $\{h(n)\}_{n=0}^\infty$ is a decreasing sequence, i.e., $h(n+1)\le h(n)$ for all $n\ge 0$, and
\begin{eqnarray} \label{banachalgebra.pf.eq9b}
  \sum_{l=1}^\infty h(2^{l-1}) 2^{(l+1)d}
& \le  & 2^{2d} h(1)+ 2^{d+2}
\sum_{l=2}^\infty\Big(\sum_{2^{l-2}<s\le 2^{l-1}} h(s)\Big) 2^{l(d-1)}\nonumber\\
& \le & 2^{2d} h(1)+ 2^{3d}\sum_{s=2}^\infty h(s)s^{d-1} 
\nonumber\\
& \le & 2^{2d} h(1)+  2^{2d} d^{-1} \sum_{s=2}^\infty
\sum_{ k\in \Zd \ {\rm with} \ |k|_\infty=s}  h(|k|_\infty)\nonumber\\
&\le &   2^{2d} (\|A\|_{{\mathcal B}}-h(0)).
\end{eqnarray}

For $1<q<\infty$ and a discrete $A_q$-weight $w$,
\begin{eqnarray*}
\|Ac\|_{q,w} & \le &
\Big\{\sum_{i \in \Zd} \Big(\sum_{j\in \Zd}  h(|i-j|_\infty) |c(j)|\Big)^q w(i)\Big\}^{1/q}\nonumber\\
& \le &   h(0)
 \Big\{\sum_{i\in \Zd} |c(i)|^q w(i)\Big\}^{1/q}
 + \Big\{
\sum_{i\in \Zd} w(i)
\Big(\sum_{l=1}^\infty h(2^{l-1}) 2^{(l+1)d} \Big)^{q-1}
\nonumber\\
& &\ \times
 \Big(\sum_{l=1}^\infty h(2^{l-1}) 2^{-(l+1)d(q-1)}
\Big( \sum_{2^{l-1}\le |i-j|_\infty< 2^l} |c(j)|\Big)^q\Big) \Big\}^{1/q}.
\end{eqnarray*}
Thus
\begin{eqnarray*}\\
\|Ac\|_{q,w} & \le &
h(0) \|c\|_{q,w}
  + 2^{2d(q-1)/q} (\|A\|_{\mathcal B}-h(0))^{(q-1)/q}\nonumber\\
& & \quad \times
\Big\{\sum_{i\in \Zd}  \sum_{l=1}^\infty  w(i) h(2^{l-1}) 2^{-(l+1)d(q-1)}
\nonumber\\
 & &\qquad \times
\Big(\sum_{2^{l-1}\le |i-j|_\infty< 2^l} |c(j)|^q w(j)\Big)\nonumber\\
& & \qquad
\times
\Big(\sum_{2^{l-1}\le |i-j'|_\infty< 2^l} (w(j'))^{-1/(q-1)}\Big)^{q-1}\Big\}^{1/q}.
\end{eqnarray*}
This together with the discrete $A_q$-weight assumption leads to
\begin{eqnarray*}
\|Ac\|_{q,w} & \le &
 h(0) \|c\|_{q,w} +2^{2d(q-1)/q} (\|A\|_{\mathcal B}-h(0))^{(q-1)/q} (A_q(w))^{1/q} 
 \nonumber\\
 & & \times \Big\{
 \sum_{l=1}^\infty   h(2^{l-1}) 2^{(l+1)d} \sum_{i\in \Zd} \frac{w(i)}{\sum_{|i-j'|_\infty<2^l} w(j')}\nonumber\\
 & & \qquad \times
 \Big(\sum_{2^{l-1}\le |i-j|_\infty< 2^l} |c(j)|^q w(j)\Big) \Big\}^{1/q}\nonumber\\
& \le & h(0) \|c\|_{q,w}
 +2^{2d(q-1)/q} (\|A\|_{\mathcal B}-h(0))^{(q-1)/q} (A_q(w))^{1/q}
 \nonumber\\
 & & \times    \Big\{
 \sum_{l=1}^\infty   h(2^{l-1}) 2^{(l+1)d}
 \Big( \sum_{j\in \Zd} |c(j)|^q w(j)\nonumber\\
 & & \quad
\Big(\sum_{\epsilon\in \{-1, 0, 1\}^d} \sum_{|i-j-\epsilon 2^{l-1}|_\infty<2^{l-1}}\frac{w(i)}
{\sum_{|i-j'|_\infty<2^l} w(j')}
\Big)\Big)\Big\}^{1/q}\nonumber\\
& \le & 2^{2d} 3^{d/p} (A_q(w))^{1/q} \|A\|_{\mathcal B}\|c\|_{q,w},
\end{eqnarray*}
and hence \eqref{banachalgebra.tm.eq3} for $1<q<\infty$ is established.

The conclusion \eqref{banachalgebra.tm.eq3} for $q=1$ can be proved by similar arguments. We omit the details here.
 \end{proof}

\section{$\ell^q_w$-stability}
\label{stability.section}

In this section, we  prove   the following  theorem, a slight generalization of Theorem \ref{beurlingstability.tm},
and  Corollary \ref{leftinverse.cor1}. We also provide a characterization to
the $\ell^q_w$-stability of a Toeplitz matrix in ${\mathcal B}$.

\begin{thm}\label{sequencespacesequivalence.thm} Let $1\le p\le \infty$,
$1\le q, q'<\infty$, let the weight matrix $u$ be  $p$-submultiplicative,  and $w, w'$ be  discrete $A_{q}$-weight and $A_{q'}$-weight respectively.
If $A\in {\mathcal B}_{p,u}$ has $\ell^{q}_{w}$-stability,
then $A$ has $\ell^{q'}_{w'}$-stability.
\end{thm}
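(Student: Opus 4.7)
The plan is to reduce the claim to a pointwise inequality whose constants do not depend on $(q,w)$ or $(q',w')$, and then transfer this inequality to $\ell^{q'}_{w'}$ via the maximal function characterization of discrete $A_{q'}$-weights.

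First I would pass from $\mathcal{B}_{p,u}$ to $\mathcal{B}$: by Theorem \ref{banachalgebra.tm}(iii), $A$ lies in $\mathcal{B}$ and so is dominated by $|a(i,j)| \le h(|i-j|_\infty)$ for some decreasing summable sequence $\{h(n)\}_{n\ge 0}$. By Theorem \ref{banachalgebra.tm}(iv), $A$ is bounded on every $\ell^{q'}_{w'}$ with $w'$ a discrete $A_{q'}$-weight, so only the lower bound in the definition of $\ell^{q'}_{w'}$-stability remains. Split $A = A_N + T_N$, where $A_N$ keeps only the entries with $|i-j|_\infty \le N$. Then $\|T_N\|_{\mathcal{B}}\to 0$ as $N\to\infty$ by the summability of $\{h(n)\}$, and by Theorem \ref{banachalgebra.tm}(iv) again, the operator norm of $T_N$ on any $\ell^q_w$ with $A_q$-weight $w$ tends to zero as $N \to \infty$.

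The core step is to extract from the $\ell^q_w$-stability of $A$ a pointwise inequality of the form
\begin{equation*}
|c(k)| \le C_N \max_{|i-k|_\infty \le N} |Ac(i)| + \varepsilon_N (Mc)(k),\qquad k\in\Zd,
\end{equation*}
valid for suitable sequences $c$, with $C_N$ depending only on the structural data of $A$ and the scale $N$ (not on $(q,w)$), and with $\varepsilon_N \to 0$ as $N \to \infty$. To derive this I would test the hypothesis against sequences obtained by restricting $c$ to the cube $k+[-N,N]^d$, use the $\ell^q_w$-stability of $A$ to control the restricted $c$ by $\|A(c\chi_{Q})\|_{q,w}$, and then dominate the cross-terms $A(c\chi_{Q^c})$ and $(A-A_N)c$ pointwise by $\varepsilon_N (Mc)$ via the layer-cake estimate underlying the proof of Theorem \ref{banachalgebra.tm}(iv), which gives $\varepsilon_N = C\sum_{2^l > N} 2^{ld} h(2^{l-1})$.

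With the pointwise inequality in hand, the transfer is routine. Raising to the $q'$-th power, weighting by $w'$, summing over $k$, and using that the local maximum of $|Ac|$ is dominated by the discrete maximal function $M(Ac)$, and that both $M(Ac)$ and $Mc$ lie in $\ell^{q'}_{w'}$ with norms controlled by $\|Ac\|_{q',w'}$ and $\|c\|_{q',w'}$ respectively (because $w'$ is a discrete $A_{q'}$-weight), one obtains
\begin{equation*}
\|c\|_{q',w'} \le C_N' \|Ac\|_{q',w'} + \varepsilon_N'' \|c\|_{q',w'},
\end{equation*}
and choosing $N$ large enough that $\varepsilon_N'' < 1/2$ absorbs the last term to produce the $\ell^{q'}_{w'}$-stability lower bound. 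The hard part is the pointwise inequality: making sure that $C_N$ depends only on $A$ and $N$ and not on $(q,w)$ forces us to invoke the $\ell^q_w$-stability only in a black-box way on sequences chosen independently of $(q,w)$, and the absorption step requires careful quantitative bookkeeping relating $\varepsilon_N$ to the stability constant. A secondary subtlety is the endpoint $q'=1$, where $M$ is only of weak type on $\ell^1_{w'}$; this is handled by replacing $Mc$ in the pointwise inequality with a slightly different local majorant adapted to $A_1$-weights, or by a layer-cake argument reducing to the strong-type case.
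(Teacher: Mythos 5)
Your overall architecture is the same as the paper's: reduce to $A\in{\mathcal B}$, note that the upper bound is automatic from Theorem \ref{banachalgebra.tm}(iv), and then extract from the $\ell^q_w$-stability a pointwise domination of $|c(i)|$ by a ``good'' kernel applied to $|Ac|$, which can then be transferred to any $\ell^{q'}_{w'}$. That is exactly the role of Lemma \ref{sequencespacesequivalence.lem}, which produces $|c(i)|\le\sum_j g(i-j)|(Ac)(j)|$ with $g$ of Beurling type; the paper then transfers by applying Theorem \ref{banachalgebra.tm}(iv) to the Toeplitz matrix $(g(i-j))_{i,j}$, which incidentally handles $q'=1$ with no maximal-function weak-type issue, so your endpoint worry is avoidable.

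The genuine gap is in your derivation of the key pointwise inequality. Testing the stability against the sharp truncation $c\chi_Q$, $Q=k+[-N,N]^d$, cannot work: writing $A(c\chi_Q)=\chi_Q(Ac)-[M_{\chi_Q},A]c+\dots$, the commutator kernel is $(\chi_Q(i)-\chi_Q(j))a(i,j)$, which is of full size $|a(i,j)|$ whenever $i,j$ straddle $\partial Q$ at distance $O(1)$. Hence the cross-term $\chi_{Q'}A(c\chi_{Q^c})$ contributes, near $\partial Q$, a quantity comparable to $\|A\|_{\mathcal B}\sup|c|$ times a weight mass comparable to that of $Q$ itself --- it is $O(1)$ relative to the left-hand side, not $\varepsilon_N$. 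This is precisely why the paper uses Lipschitz cutoffs $h((\cdot-n)/N)$ (the commutator then gains $N^{-1/2}$ for $|i-j|\le\sqrt N$ plus tail decay). Even granting smooth cutoffs, a single application does not yield your inequality: the error is $\sum_{n'}V_N(n-n')\,\alpha_{n'}^{-1/q}\|\Psi_{n'}^Nc\|_{q,w}$, a weighted sum of localized \emph{weighted} $\ell^q$-averages over all locations $n'$, and these averages dominate (rather than are dominated by) the plain averages in $Mc(k)$, by \eqref{discreteaqweight.eq4}. One must iterate this convolution inequality and sum the Neumann series $\sum_l V_N^l$ (using that its ${\mathcal B}$-type norm is $<1$ for large $N$) before any pointwise bound on $|c(i)|$ emerges; this iteration, not a one-step absorption, is the heart of the proof and is missing from your argument.
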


As  the trivial weight $w_0$ (i.e. $w_0(i)=1$ for all $i\in \Zd$)  is a discrete $A_q$-weight for any $1\le q<\infty$, we have the following corollary
of Theorem \ref{sequencespacesequivalence.thm}. Similar result about $\ell^q$-stability for different exponents $q\in [1, \infty]$
is  established by Aldroubi, Baskakov and Krishtal \cite{akramjfa09} for infinite matrices in the  Gohberg-Baskakov-Sj\"ostrand class ${\mathcal C}_{1, p_\alpha} (\Zd, \Zd)$ with $\alpha>(d+1)^2$, by Tessera \cite{tessera09} for $\alpha>0$, and  by Shin and Sun \cite{shincjfa09} for $\alpha\ge 0$, where $p_\alpha=((1+|i-j|_\infty)^\alpha)_{i,j\in \Zd}$.

\begin{cor}\label{sequencespacesequivalence.cor}
If $A\in {\mathcal B}$ has $\ell^{q}$-stability for some $1\le q<\infty$,
then $A$ has $\ell^{q'}$-stability for all $1\le q'<\infty$.
\end{cor}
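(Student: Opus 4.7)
The plan is to derive Corollary \ref{sequencespacesequivalence.cor} as an immediate specialization of Theorem \ref{sequencespacesequivalence.thm} to the trivial choice of weights. Set $p=1$ and take the trivial weight matrix $u \equiv 1$, so that $\mathcal{B}_{1,u}$ coincides with $\mathcal{B}$, as noted in the paper. Choose both $w$ and $w'$ to be the trivial sequence weight $w_0 \equiv 1$, so that $\ell^{q}_{w}$- and $\ell^{q'}_{w'}$-stability reduce to $\ell^{q}$- and $\ell^{q'}$-stability respectively.

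Before invoking Theorem \ref{sequencespacesequivalence.thm}, two trivial verifications are needed. First, $u \equiv 1$ is $1$-submultiplicative: take $v \equiv 1$ as the companion weight matrix, so \eqref{matrix.condition0} is automatic, \eqref{matrix.condition1} reduces to $1 \le 1 + 1$, and the norm in \eqref{matrix.condition2} for $p=1$ is the $\ell^{\infty}$-norm of the constant sequence $1$, namely $1$. Second, the trivial weight $w_0$ is a discrete $A_q$-weight for every $1 \le q < \infty$: substituting $w \equiv 1$ into \eqref{discreteaqweight.eq1}, or into \eqref{discreteaqweight.eq2} when $q=1$, gives the product of the relevant averages equal to $1$, so the defining bound holds with constant $1$.

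With these observations all hypotheses of Theorem \ref{sequencespacesequivalence.thm} are met, and its conclusion is exactly the assertion of the corollary. The genuine mathematical content of the equivalence is carried by Theorem \ref{sequencespacesequivalence.thm} itself, so the proof of the corollary amounts to a notational specialization and no obstacle is expected at this step; the real work lies in establishing the theorem.
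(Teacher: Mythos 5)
Your proof is correct and matches the paper's own derivation: the paper obtains this corollary precisely by noting that the trivial weight $w_0$ is a discrete $A_q$-weight for every $1\le q<\infty$ and then specializing Theorem \ref{sequencespacesequivalence.thm} (with ${\mathcal B}_{1,u}={\mathcal B}$ for $u\equiv 1$). Your additional verifications that $u\equiv 1$ is $1$-submultiplicative with companion $v\equiv 1$ and that $w_0$ satisfies \eqref{discreteaqweight.eq1}--\eqref{discreteaqweight.eq2} with constant $1$ are accurate and only make explicit what the paper leaves implicit.
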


The $\ell^q_w$-stability  is one of the basic assumptions for infinite
matrices arising in many  fields of mathematics
(see \cite{akramgrochenig01,
balanchl04,  grochenigl03, grochenigstrohmer, jaffard90,   sjostrand94, sunsiam06,
suntams07}
for a sample of papers), but
little is known about
practical criteria for the $\ell^q_w$-stability of an infinite
matrix,  see \cite{sunappear} for the  diagonal-blocks-dominated criterion for the  $\ell^2$-stability of infinite matrices in the Gohberg-Baskakov-Sj\"ostrand class ${\mathcal C}(\Zd, \Zd)$.
As an application of Theorem  \ref{beurlingstability.tm}, we have the following characterization to the $\ell^q_w$-stability of a Toeplitz matrix in ${\mathcal B}$.

\begin{cor} \label{Toeplitzcriterion.cor}
Let $1\le q< \infty$, $A:=(a(i-j))_{i,j\in \Zd}$ be a Toeplitz matrix in ${\mathcal B}$, and let $w$ be a discrete $A_q$-weight. Then $A$ has
$\ell^q_w$-stability if and only if
 $\hat a(\xi):=\sum_{n\in \Zd} a(n) e^{-\sqrt{-1}\ n\xi}\ne 0$ for all $\xi\in \Rd$.
\end{cor}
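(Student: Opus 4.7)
The plan is to pass through $\ell^2$-stability, where Toeplitz stability admits a clean Fourier description, and then bootstrap to general $q$ and $w$ using the equivalence Theorem \ref{sequencespacesequivalence.thm}.

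First I would note that any $A \in {\mathcal B}$ has absolutely summable entries: by radial monotonicity one has $|a(n)| \le h(|n|_\infty)$ with $\sum_{k \ge 0} h(k) < \infty$, so after packaging by shells $\sum_{n\in\Zd} |a(n)| \lesssim \sum_k h(k)(2k+1)^{d-1} < \infty$ (the finiteness here is exactly the $\|A\|_{\mathcal B}$ condition rewritten). Thus the symbol $\hat a$ is a continuous function on $\TT^d$, and moreover $\hat a \in A^*(\TT^d)$ in the multivariate sense.

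Next, I would establish the bridge: a Toeplitz matrix $A \in {\mathcal B}$ has $\ell^2$-stability if and only if $\hat a(\xi) \ne 0$ for every $\xi \in \Rd$. This is the standard Plancherel identification $\ell^2(\Zd) \cong L^2(\TT^d)$ via $c \mapsto \hat c(\xi) = \sum_{n} c(n) e^{-\sqrt{-1}\,n\xi}$; under it, $A$ becomes the multiplication operator $g \mapsto \hat a \cdot g$. Hence $\|Ac\|_2 = \|\hat a\, \hat c\|_{L^2(\TT^d)}$, and the two-sided bound $\|Ac\|_2 \asymp \|c\|_2$ holds iff $\mathrm{ess\,inf}_\xi |\hat a(\xi)| > 0$. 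Because $\hat a$ is continuous and $\TT^d$ compact, this essential infimum coincides with $\min_\xi |\hat a(\xi)|$, so the condition is exactly $\hat a(\xi) \ne 0$ for all $\xi$. (The converse direction uses test sequences whose Fourier transforms are $L^2$-bumps concentrated near any zero of $\hat a$.)

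Finally, I would glue the two ingredients together. If $A$ has $\ell^q_w$-stability for some given $1 \le q < \infty$ and discrete $A_q$-weight $w$, then, since the trivial weight $w_0 \equiv 1$ is a discrete $A_2$-weight, Theorem \ref{sequencespacesequivalence.thm} (applied with $p=1$ and $u \equiv 1$, so $\mathcal{B}_{1,u} = \mathcal{B}$) yields $\ell^2$-stability, whence $\hat a$ has no zeros by the bridge. Conversely, if $\hat a$ has no zeros, the bridge gives $\ell^2$-stability, and Theorem \ref{sequencespacesequivalence.thm} propagates this to $\ell^{q}_{w}$-stability for the original $q$ and $w$.

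The only step that requires care is the bridge to $\ell^2$-stability. The upper bound is immediate (and is also a special case of Theorem \ref{banachalgebra.tm}(iv)); the lower bound from $\min_\xi |\hat a(\xi)| > 0$ is the easy direction. The nontrivial direction --- that a zero of $\hat a$ destroys $\ell^2$-stability --- is handled by picking $\xi_0$ with $\hat a(\xi_0)=0$, choosing an $L^2$-normalized trigonometric polynomial $g_n$ supported in a shrinking neighborhood of $\xi_0$ (so $\|\hat a \cdot g_n\|_{L^2} \to 0$ by continuity of $\hat a$), and pulling back through the Plancherel isometry to obtain $c_n \in \ell^2(\Zd)$ with $\|c_n\|_2 = 1$ and $\|Ac_n\|_2 \to 0$. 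With the bridge in hand, the corollary is immediate.
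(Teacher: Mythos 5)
Your proof is correct and follows exactly the route the paper intends: the corollary is presented there as an application of Theorem \ref{beurlingstability.tm} (equivalently Theorem \ref{sequencespacesequivalence.thm} with $p=1$, $u\equiv 1$), reducing $\ell^q_w$-stability to $\ell^2$-stability with the trivial weight, where the Toeplitz operator is unitarily equivalent to multiplication by the continuous symbol $\hat a$ and stability is the standard non-vanishing condition. The paper gives no further details, and your filled-in bridge (continuity of $\hat a$ from $\ell^1$-summability of $(a(n))$, which follows since $|a(k)|\le\sup_{|n|_\infty\ge|k|_\infty}|a(n)|$, plus the concentrated test sequences near a zero of $\hat a$) is exactly what is needed.
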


To prove Theorem \ref{sequencespacesequivalence.thm}, we recall a characterization for discrete $A_q$-weights.

\begin{lem} {\rm \cite{garciabook85, steinbook93}}\  Let $1\le q<\infty$. Then  $w:=(w(i))_{i\in \Zd}$ is a discrete $A_q$-weight with the $A_q$-bound $A_q(w)$ if and only if
\begin{eqnarray}\label{discreteaqweight.eq4}
 & & \Big( N^{-d} \sum_{i\in a+[0,N-1]^d} |c(i)| \Big)^q
\Big(N^{-d} \sum_{i\in a+[0,N-1]^d} w(i) \Big)\nonumber\\
&  \le &
A_q(w)N^{-d} \sum_{i\in a+[0,N-1]^d}  |c(i)|^q w(i) \end{eqnarray}
hold for all $a\in \Zd, 1\le N\in \ZZ$ and  sequences $c:=(c(i))_{i\in \Zd}$.
\end{lem}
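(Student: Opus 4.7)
The plan is to treat the two directions of the equivalence separately, and within the forward direction to split the cases $q=1$ and $1<q<\infty$ since the two forms of the $A_q$ condition look different.

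For the forward direction with $1<q<\infty$, I would start from $\sum_{i\in a+[0,N-1]^d} |c(i)| = \sum_{i\in a+[0,N-1]^d} |c(i)| w(i)^{1/q} \cdot w(i)^{-1/q}$ and apply H\"older's inequality with exponents $q$ and $q/(q-1)$ to obtain
\begin{equation*}
\Big(N^{-d}\sum_{Q} |c(i)|\Big)^q \le \Big(N^{-d}\sum_{Q} |c(i)|^q w(i)\Big)\Big(N^{-d}\sum_{Q} w(i)^{-1/(q-1)}\Big)^{q-1},
\end{equation*}
where $Q:=a+[0,N-1]^d$. Multiplying by $N^{-d}\sum_Q w(i)$ and invoking the $A_q$ condition \eqref{discreteaqweight.eq1} to bound the product $(N^{-d}\sum_Q w(i))(N^{-d}\sum_Q w(i)^{-1/(q-1)})^{q-1}$ by $A_q(w)$ immediately yields \eqref{discreteaqweight.eq4}. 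For $q=1$, the bound $\sum_Q |c(i)|w(i)\ge (\inf_Q w)\sum_Q |c(i)|$ combined with \eqref{discreteaqweight.eq2} gives the result directly.

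For the converse, I would test \eqref{discreteaqweight.eq4} on carefully chosen sequences. When $1<q<\infty$, taking $c(i)=w(i)^{-1/(q-1)}\chi_Q(i)$ makes $|c(i)|^q w(i)=w(i)^{-1/(q-1)}$ on $Q$, so \eqref{discreteaqweight.eq4} collapses to
\begin{equation*}
\Big(N^{-d}\sum_Q w(i)^{-1/(q-1)}\Big)^{q-1}\Big(N^{-d}\sum_Q w(i)\Big)\le A_q(w),
\end{equation*}
after cancelling one factor of $N^{-d}\sum_Q w(i)^{-1/(q-1)}$ from both sides; this is precisely \eqref{discreteaqweight.eq1}. (A routine truncation handles the possibility that $w(i)^{-1/(q-1)}$ is infinite.) For $q=1$, testing on $c(i)=\chi_{\{i_0\}}(i)$ for any $i_0\in Q$ recovers $N^{-d}\sum_Q w(i)\le A_1(w) w(i_0)$, and taking the infimum over $i_0\in Q$ gives \eqref{discreteaqweight.eq2}.

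The only mild subtlety is the degenerate case in which $\sum_Q w(i)^{-1/(q-1)}$ could be infinite; this is handled by first restricting to $\{i\in Q: w(i)\ge \epsilon\}$, applying the argument, and letting $\epsilon\to 0^+$. Otherwise the proof is essentially a bookkeeping exercise with H\"older's inequality and the self-dual nature of the $A_q$ condition, so no genuine obstacle is expected.
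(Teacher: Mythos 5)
Your argument is correct, and in fact the paper offers no proof of this lemma at all: it is quoted as a known characterization from the cited references (Garcia-Cuerva--Rubio de Francia, Stein), so there is nothing internal to compare against. What you give is the standard textbook proof transplanted to the lattice: H\"older with exponents $q$ and $q/(q-1)$ applied to $|c|=|c|w^{1/q}\cdot w^{-1/q}$ for the forward direction, and testing on $c=w^{-1/(q-1)}\chi_Q$ (resp.\ $c=\chi_{\{i_0\}}$ when $q=1$) for the converse; both steps are sharp, so the optimal constants on the two sides agree, which is exactly the ``with the $A_q$-bound $A_q(w)$'' clause. Two minor remarks. First, the product you bound, $\big(N^{-d}\sum_Q w\big)\big(N^{-d}\sum_Q w^{-1/(q-1)}\big)^{q-1}$, is the standard Muckenhoupt expression and the one the author actually uses elsewhere (e.g.\ in the proof of the boundedness estimate \eqref{banachalgebra.tm.eq3}), but it does not literally match the displayed condition \eqref{discreteaqweight.eq1}, whose exponents $-(q-1)$ and $1/(q-1)$ appear to be a typo for $-1/(q-1)$ and $q-1$; you should say explicitly which form you take as the definition. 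Second, your worry about $\sum_Q w^{-1/(q-1)}$ being infinite is vacuous here: $Q=a+[0,N-1]^d$ is a finite set and $w$ is a positive sequence, so every sum involved is a finite sum of positive terms and no truncation or limiting argument is needed.
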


To prove Theorem \ref{sequencespacesequivalence.thm},
we  need a technical lemma about estimating  a bounded sequence $c$ via the sequence $Ac$, which will also be  used later in the proof of Theorem \ref{wienerlemmaforbeurlingalgebra.tm}.
Similar estimate  is given in \cite{sjostrand94} when the infinite matrix  $A$ belongs to the Gohberg-Baskakov-Sj\"ostrand class ${\mathcal C}(\Zd, \Zd)$ and has $\ell^p_w$-stability for the trivial weight $w\equiv 1$.

\begin{lem}\label{sequencespacesequivalence.lem}
Let
$1\le q<\infty$,   and $w$ be  a discrete  $A_{q}$-weight.
If $A\in {\mathcal B}$ has $\ell^{q}_{w}$-stability,
then there exists a  nonnegative sequence $\{g(i)\}_{i\in \Zd}$ on $\Zd$ such that
\begin{equation}\label{sequencespacesequivalence.lem.eq1}
\sum_{k\in \Zd} \Big(\sup_{|i|_\infty \ge |k|_\infty} g(i)\Big)<\infty
\end{equation}
and
\begin{equation}\label{sequencespacesequivalence.lem.eq2}
|c(i)|\le \sum_{j\in \Zd} g(i-j) |(Ac)(j)|, \ i\in \Zd,
\end{equation}
where $c\in \ell^\infty$.
\end{lem}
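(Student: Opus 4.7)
The plan is to exploit the $\ell^q_w$-stability of $A$ through localized truncations of $c$ around each point $i$, combined with the off-diagonal decay inherent in $A\in \mathcal{B}$. Recall that any $A\in \mathcal{B}$ is dominated by a radially decreasing summable sequence $\{h(n)\}_{n\ge 0}$: $|a(i,j)|\le h(|i-j|_\infty)$ with $\sum_{k\in\Zd} h(|k|_\infty) < \infty$.

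Fix $i\in\Zd$ and $c\in \ell^\infty$. For each integer $N\ge 1$ set $c^N := c\cdot \chi_{\{|k-i|_\infty \le N\}}$; since this is finitely supported, $c^N\in\ell^q_w$ and the stability hypothesis gives
\[
|c(i)|^q w(i) \;\le\; \|c^N\|_{q,w}^q \;\le\; C_{\mathrm{stab}}^q\, \|Ac^N\|_{q,w}^q.
\]
Decomposing $Ac^N = Ac - A(c-c^N)$ and using the pointwise tail bound $|A(c-c^N)(j)| \le \|c\|_\infty\sum_{|k-i|_\infty > N} h(|j-k|_\infty)$, one sees that when $|j-i|_\infty \le N/2$ the tail contributes only a factor $\delta(N/2) := \sum_{|\ell|_\infty > N/2} h(|\ell|_\infty)$, while for large $|j-i|_\infty$ the estimate $|Ac^N(j)| \le \|c\|_\infty\sum_{k\in B_N(i)} h(|j-k|_\infty)$ produces decay in $|j-i|_\infty$. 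Converting the resulting $\ell^q_w$-norm estimate into a pointwise convolution bound uses the $A_q$-characterization \eqref{discreteaqweight.eq4}, which dominates local $\ell^q_w$-averages by local $\ell^1$-averages with constants controlled by $A_q(w)$. For each scale $N$ this delivers an inequality of the form
\[
|c(i)| \;\le\; \sum_{j\in\Zd} G_N(i-j)\,|Ac(j)| \;+\; \varepsilon(N)\,\|c\|_\infty,
\]
where $G_N$ is a nonnegative kernel essentially supported on a ball of radius $\sim N$ and $\varepsilon(N)\to 0$ as $N\to\infty$.

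The main obstacle is to eliminate the $\|c\|_\infty$ error term and to assemble the scale-dependent kernels $G_N$ into a single Toeplitz majorant $g$ satisfying the Beurling-type summability $\sum_{k\in\Zd}\sup_{|i|_\infty\ge|k|_\infty} g(i) < \infty$. The plan is to iterate the above inequality — substituting the same local bound for every $|c(k)|$ contributing to $\|c\|_\infty$ — while simultaneously aggregating contributions across a dyadic family of scales $N=2^\ell$. The geometric decay of $\varepsilon(2^\ell)$, combined with the algebra property of $\mathcal{B}$ established in Theorem \ref{banachalgebra.tm} (which controls compositions of operators dominated by radial decreasing summable kernels), should guarantee convergence of the iterative scheme to a limit kernel $g$ with the required radial summability. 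This multi-scale bookkeeping, keeping $g$ simultaneously radial-decreasing and summable while absorbing every $\|c\|_\infty$ contribution, is the technical heart of the argument and parallels, in the weighted $A_q$ setting, Sj\"ostrand's analogous construction for the Gohberg-Baskakov-Sj\"ostrand class $\mathcal{C}(\Zd,\Zd)$.
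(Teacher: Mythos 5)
Your overall strategy --- localize, use stability to bound $|c(i)|$ by a local norm of $Ac$ plus an error, then iterate the error away --- is the right shape, and it is indeed what the paper does. But two concrete steps in your scheme fail as stated. First, the sharp truncation $c^N=c\,\chi_{\{|k-i|_\infty\le N\}}$ creates a boundary layer: for $j$ with $|j-i|_\infty\approx N$ the quantity $|A(c-c^N)(j)|$ is of full size $\|A\|_{\mathcal B}\|c\|_\infty$ (neither the tail bound $\delta(N/2)$ nor the far-field decay applies there), and the weighted mass of that annulus relative to $w(i)$ grows like a power of $N$ by \eqref{discreteaqweight.eq4}. So your $\varepsilon(N)$ does not tend to $0$; it blows up. The paper avoids this by replacing sharp cutoffs with Lipschitz cutoffs $\Psi_n^N$ at scale $N$ and estimating the \emph{commutator} $(\Psi_n^NA-A\Psi_n^N)\Psi_{n'}^N$: the factor $h(\frac{i-n}{N})-h(\frac{j-n}{N})$ contributes $|i-j|_\infty/N\le N^{-1/2}$ on the near-diagonal part and the Beurling tail $\sum_{|k|_\infty\ge\sqrt N/2}\sup_{|i-j|_\infty\ge|k|_\infty}|a(i,j)|$ on the rest, both of which vanish as $N\to\infty$.

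Second, even granting $\varepsilon(N)\to0$, the error term $\varepsilon(N)\|c\|_\infty$ is completely delocalized: substituting the same bound for each $|c(k)|$ in the supremum and iterating yields, in the limit, a term of the form $\frac{\varepsilon}{1-\varepsilon}\sup_k(G_N*|Ac|)(k)$, which is a constant in $i$ and cannot be absorbed into $\sum_j g(i-j)|(Ac)(j)|$ with $g$ satisfying \eqref{sequencespacesequivalence.lem.eq1} (that would force $g$ to be bounded below, hence non-summable). What the paper's proof actually produces is an error that is itself a \emph{convolution} $\sum_{n'}V_N(n-n')\,\alpha_{n'}^{-1/q}\|\Psi_{n'}^Nc\|_{q,w}$ against a kernel $V_N$ whose Beurling norm $\epsilon_N^1$ tends to $0$; the spatial decay of $V_N(n-n')$ for $|n-n'|_\infty>8N$ comes from the far-field commutator estimate \eqref{sequencespacesequivalence.tm.pf.eq4} and is exactly what your scheme is missing. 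Once $\epsilon_N^1<5^{-d}$ at a single sufficiently large scale $N$ (no dyadic family of scales is needed, and no geometric decay of $\varepsilon$ in $N$ is required), the Neumann series $W_N=\sum_{l\ge1}V_N^l$ converges in the Beurling norm, and the pointwise bound \eqref{sequencespacesequivalence.lem.eq2} follows from \eqref{discreteaqweight.eq4}. To repair your argument you would need to (i) replace sharp cutoffs by smooth ones and prove the two commutator estimates, and (ii) keep track of the spatial decay of the error kernel so that the iteration runs inside the Beurling algebra rather than in $\ell^\infty$.
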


\begin{proof} 
 Without loss of generality, we assume that
\begin{equation}\label{sequencespacesequivalence.tm.pf.eq1}
\|c\|_{q,w}\le \|Ac\|_{q,w}\quad {\rm for   \ all} \ c\in \ell^q_w.
\end{equation}
Let $h(x)=\min(\max(2-|x|_\infty,0),1)$  and  $N$ be a sufficiently large  integer chosen later.
Define
 linear  operators  $\Psi_n^N, n\in N\Zd$, on $\ell^q_w$ by
 \begin{equation}
\label{multiplicationoperator.def} \Psi_n^N
c:=\Big(h\big(\frac{j-n}{N}\big)
c(j)\Big)_{j\in \Zd} \quad {\rm for}\  c:=(c(j))_{j\in \Zd}\in \ell^q_w.
\end{equation}
Then
 for $c:=(c(j))_{j\in \Zd}\in \ell^q_w$ and $|n-n'|_\infty\le 8N$,
 \begin{eqnarray} \label{sequencespacesequivalence.tm.pf.eq3}
  & & \|(\Psi_n^N A- A \Psi_n^N)\Psi_{n'}^Nc\|_{q,w}\nonumber\\
 & = &
 \Big\{\sum_{i\in \Zd} \Big| \sum_{j\in \Zd}
  \Big( h\big(\frac{i-n}{N}\big)-h\big(\frac{j-n}{N}\big)\Big)\nonumber\\
   & & \quad \times a(i,j) h\big(\frac{j-n'}{N}\big)
c(j) \Big|^q w(i)\Big\}^{1/q}\nonumber\\
& \le & N^{-1/2}  \Big\{ \sum_{i\in \Zd} \Big ( \sum_{|i-j|_\infty\le \sqrt{N}}
 |a(i,j)| |c(j)| \Big )^q w(i)\Big\}^{1/q}\nonumber\\
 & &  +
\Big\{  \sum_{i\in \Zd} \Big ( \sum_{|i-j|_\infty>\sqrt{N}}
 |a(i,j)| |c(j)| \Big )^q w(i)\Big\}^{1/q}\nonumber\\
 & \le &
 \Big\{2^{2d+2d/q} N^{-1/2}  (A_q(w))^{1/q}
  \|A\|_{\mathcal B}
 +   2^{3d+2d/q+1} (A_q(w))^{1/q}    \nonumber\\
 & &
 \quad \times \Big(\sum_{|k|_\infty\ge \sqrt{N}/2}
 \sup_{|i-j|_\infty\ge|k|_\infty} |a(i,j)|\Big)\Big\} \|c\|_{q,w},
\end{eqnarray}
where  the last inequality  follows from
 Theorem \ref{banachalgebra.tm} and the following estimate:
\begin{eqnarray*}
 & & \sum_{k\in \Zd} \sup_{|i-j|_\infty\ge \max(|k|_\infty, \sqrt{N})}
|a(i,j)|\\
 & \le &
(2\sqrt{N}+1)^d\sup_{|i-j|_\infty\ge \sqrt{N}}|a(i,j)|
 +
 \sum_{|k|_\infty> \sqrt{N}}
 \sup_{|i-j|_\infty\ge|k|_\infty} |a(i,j)|\\
  & \le & 2^{d+1}  \sum_{|k|_\infty\ge \sqrt{N}/2}
 \sup_{|i-j|_\infty\ge|k|_\infty} |a(i,j)|.
 \end{eqnarray*}
Similarly for  $c:=(c(j))_{j\in \Zd}\in \ell^q_w$ and $|n-n'|_\infty>8N$,
\begin{eqnarray} \label{sequencespacesequivalence.tm.pf.eq4}
& &  \|(\Psi_n^N A- A \Psi_n^N)\Psi_{n'}^Nc\|_{q,w}\nonumber\\
& = & \Big(\sum_{i\in \Zd} \Big| \sum_{j\in \Zd}
 h\big(\frac{i-n}{N}\big) a(i,j) h\big(\frac{j-n'}{N}\big) c(j)\Big|^q w(i) \Big)^{1/q}\nonumber\\
& \le &
 \Big(\sup_{|i'-j'|_\infty\ge |n-n'|_\infty/2} |a(i',j')| \Big)
\Big(\sum_{|i-n|_\infty<2N} \Big ( \sum_{|j-n'|_\infty<2N}
 |c(j)|\Big )^q w(i) \Big)^{1/q}
\nonumber\\
& \le &  2^{2d} N^d  (A_q(w))^{1/q}
 \Big(\sup_{|i'-j'|_\infty\ge |n-n'|_\infty/2} |a(i',j')| \Big)\nonumber\\
& & \qquad \qquad\qquad \qquad \qquad \times
\Big(\frac{ \sum_{|i'-n|_\infty<2N}  w(i')}
{ \sum_{|i'-n'|_\infty<2N}  w(i')}\Big)^{1/q}
 \|c\|_{q,w}.
\end{eqnarray}

Define
\begin{equation} \label{sequencespacesequivalence.tm.pf.eq5}
\alpha_n:= \sum_{|i'-n|_\infty<2N}  w(i'), \quad n\in N\Zd.
\end{equation}
and  the linear operator  $\Phi_N$ on $\ell^p_w$  by
 \begin{equation}
 \Phi_Nc:=\Big(\Big(\sum_{n\in N\Zd}\Big( h\big(\frac{j-n}{N}\big)\Big)^2\Big)^{-1} c(j)\Big)_{j\in \Zd} \ {\rm for} \ c:=(c(j))_{j\in \Zd}\in \ell^p_w.
 \end{equation}
 Then
for all $n'\in N\Zd$ with $|n-n'|\le 8N$,
\begin{equation}\label{sequencespacesequivalence.tm.pf.eq6}
\alpha_n\le  \sum_{|i'-n'|_\infty<10N}  w(i')
\le  6^{dq} A_q(w) \alpha_{n'}
\end{equation}
by \eqref{discreteaqweight.eq4}, and
\begin{equation}\label{sequencespacesequivalence.tm.pf.eq6+}
\|\Phi_N c\|_{q, w} \le \|c\|_{q,w} \quad {\rm for \ all}  \ c\in \ell^q_w.\end{equation}
 Note that $\Psi_n^Nc\in \ell^p_w$ for any $c\in \ell^\infty$ and $n\in N\Zd$, and
 \begin{equation}\label{sequencespacesequivalence.tm.pf.eq6++}
\|\Psi_n^Nc\|_{q,w}\le  \alpha_n^{1/q} \|c\|_\infty, \ n\in N\Zd. \end{equation}
 Then for $c\in \ell^\infty$, combining \eqref{sequencespacesequivalence.tm.pf.eq1}, \eqref{sequencespacesequivalence.tm.pf.eq3},
\eqref{sequencespacesequivalence.tm.pf.eq4}, \eqref{sequencespacesequivalence.tm.pf.eq6}, and
\eqref{sequencespacesequivalence.tm.pf.eq6+}
 leads to
\begin{eqnarray}
\label{sequencespacesequivalence.tm.pf.eq7}
 & & \quad \alpha_n^{-1/q} \|\Psi_n^N c\|_{q,w}    \le     \alpha_n^{-1/q} \| A \Psi_n^N c\|_{q,w}\\ 
 & \le & \alpha_n^{-1/q}  \|  \Psi_n^N A c\|_{q,w}+ \alpha_n^{-1/q}
 \| (\Psi_n^N A-A\Psi_n^N) c\|_{q,w}\nonumber\\
 & \le & \alpha_n^{-1/q}   \|  \Psi_n^N A c\|_{q,w}+ \alpha_n^{-1/q}
\sum_{n'\in N\Zd} \big \| (\Psi_n^N A-A\Psi_n^N) \Psi_{n'}^N \Phi_N \Psi_{n'}^N c\big\|_{q,w}\nonumber\\
 & \le &  \alpha_n^{-1/q}  \|  \Psi_n^N A c\|_{q,w}+ 2^{2d+2d/q} 6^d  (A_q(w))^{2/q}
\sum_{|n'-n|_\infty\le 8N}  \alpha_{n'}^{-1/q} \|\Psi_{n'}^Nc\|_{q,w}
 \nonumber\\  & & \quad
 \times\Big \{
\Big (  N^{-1/2} \|A\|_{{\mathcal B}}
  +  2^{d+1}
 \sum_{|k_\infty\ge  \sqrt{N}/2}
 \sup_{|i'-j'|_\infty\ge|k|_\infty} |a(i',j')| \Big) \Big\}\nonumber\\
 & &\quad
 +
  \sum_{|n'-n|_\infty> 8N}
   \alpha_{n'}^{-1/q} \| \Psi_{n'}^N c\|_{q,w}\nonumber\\
   & & \quad \times \Big\{ 2^{2d} N^d (A_q(w))^{1/q}
   \Big(\sup_{|i'-j'|_\infty\ge |n-n'|_\infty/2} |a(i',j')| \Big)\Big\}
  \nonumber\\
 & =: &   \alpha_n^{-1/q}\|  \Psi_n^N A c\|_{q,w}+
  \sum_{n'\in N\Zd} V_N(n-n') \alpha_{n'}^{-1/q}\| \Psi_{n'}^N c\|_{q,w}.
  \nonumber
\end{eqnarray}

Define  sequences  $V^l_N:=(V^l_N(n))_{n\in N\Zd}, l\ge 1$, as follows:
\begin{equation} \label{sequencespacesequivalence.tm.pf.eq9}
\left\{\begin{array}{l}
V^l_N(n):= V_N(n) \quad {\rm if} \ l=1 \ {\rm and} \  n\in N\Zd, \\
V^l_N(n):=\sum_{n'\in N\Zd} V_N(n-n') V_N^{l-1}(n') \quad {\rm if} \ l\ge 2 \ {\rm and}\  n\in N\Zd.
\end{array}\right.
\end{equation}
 Then for $c\in \ell^\infty$, applying \eqref{sequencespacesequivalence.tm.pf.eq7} repeatedly yields
 \begin{eqnarray} \label{sequencespacesequivalence.tm.pf.eq14}
 & &\alpha_n^{-1/q}\|\Psi_n^N c\|_{q,w}\nonumber\\
  &\le &   \alpha_n^{-1/q}\|\Psi_n^N Ac\|_{q,w}+\sum_{l=1}^{l_0}
\sum_{n'\in N\Zd} V_N^l(n-n') \alpha_{n'}^{-1/q} \|\Psi_{n'}^N Ac\|_{q,w}\nonumber\\
 & & +
\sum_{n'\in N\Zd} V_N^{l_0+1}(n-n') \alpha_{n'}^{-1/q} \|\Psi_{n'}^N c\|_{q,w}, \quad \ l_0\ge 1.
 \end{eqnarray}
 Set
\begin{equation} \label{sequencespacesequivalence.tm.pf.eq10}
\epsilon_N^l:=\sum_{k\in N\Zd} \sup_{|n|_\infty\ge |k|_\infty} |V_N^l(n)|.
\end{equation}
 Inductively for $l\ge 2$,
\begin{eqnarray*} \label{sequencespacesequivalence.tm.pf.eq11}
\epsilon_N^{l} &\le   & \epsilon_N^{l-1} \sum_{k\in N\Zd} \sup_{|n|_\infty\ge |k|_\infty/2} |V_N(n)|\nonumber\\
& & +\epsilon_N^1 \sum_{k\in N\Zd} \sup_{|n|_\infty\ge |k|_\infty/2} |V_N^{l-1}(n)|\le 5^{d} \epsilon_N^1 \epsilon_N^{l-1},
 \end{eqnarray*}
where we have used \eqref{banachalgebra.pf.eq6} to obtain the last inequality. This shows that
 \begin{equation} \label{sequencespacesequivalence.tm.pf.eq12}
 \epsilon_N^l\le (5^d \epsilon_N^1)^l \quad  {\rm for\ all}\  l\ge 1.
 \end{equation}
Note that
 \begin{eqnarray*} \label{sequencespacesequivalence.tm.pf.eq8}
\epsilon_N^1 & \le  &   2^{4d+2d/q}  3^{3d} (A_q(w))^{2/q}  \Big\{  N^d
 \Big(\sup_{|i'-j'|_\infty> 4N} |a(i',j')| \Big)
\nonumber\\
 & &
+
  2^{d+1}
 \Big(\sum_{k'\in \Zd,\ |k'|_\infty\ge  \sqrt{N}/2}
 \sup_{|i'-j'|_\infty\ge|k'|_\infty} |a(i',j')|\Big) \nonumber\\
 & &
  + N^{-1/2}\|A\|_{{\mathcal B}} \Big\}
 +
2^{2d} (A_q(w))^{1/q}\nonumber\\
& &\quad \times \Big\{\sum_{|k|_\infty> 8N, k\in N\Zd} N^d \Big(\sup_{|i'-j'|_\infty\ge |k|_\infty/2} |a(i',j')| \Big)\Big\}\nonumber\\
& \le & 2^{4d+2d/q}  3^{3d}  (A_q(w))^{2/q}  \Big\{N^{-1/2}\|A\|_{{\mathcal B}}\nonumber\\
 & & \quad
 +
  2^{d+2}
  \Big(\sum_{k'\in \Zd,\ |k'|_\infty\ge  \sqrt{N}/2}
 \sup_{|i'-j'|_\infty\ge|k'|_\infty} |a(i',j')|\Big)
 \Big\}
  \nonumber\\
 & & +
2^{2d+1} (A_q(w))^{1/q} \sum_{ k'\in \Zd, \ |k'|> 7N} \Big(\sup_{|i'-j'|_\infty\ge 7|k'|_\infty/16} |a(i',j')| \Big)\nonumber\\
\qquad & \le & 2^{6d}  3^{3d} (A_q(w))^{2/q}
 N^{-1/2} \|A\|_{{\mathcal B}}+2^{7d+3} 3^{3d} (A_q(w))^{2/q}   \nonumber\\
 & & \quad \times \Big(\sum_{k'\in \Zd,\ |k'|_\infty\ge  \sqrt{N}/2}
 \big(\sup_{|i'-j'|_\infty\ge |k'|_\infty} |a(i', j')|\big)\Big)\nonumber\\
\quad & \to &  0 \quad {\rm as } \ N\to +\infty
  \end{eqnarray*}
by the
 assumption $A\in {\mathcal B}$.
Let  $N$ be the integer chosen sufficiently large so that
 \begin{equation} \label{sequencespacesequivalence.tm.pf.eq13}
\epsilon_N^1 <5^{-d}.
 \end{equation}
 Taking the limit as $l_0\to \infty$ in \eqref{sequencespacesequivalence.tm.pf.eq14}, and using \eqref{sequencespacesequivalence.tm.pf.eq6++},
\eqref{sequencespacesequivalence.tm.pf.eq12} and \eqref{sequencespacesequivalence.tm.pf.eq13}  lead to
\begin{eqnarray}\label{sequencespacesequivalence.tm.pf.eq15}
\alpha_n^{-1/q}\|\Psi_n^N c\|_{q,w} & \le &\alpha_n^{-1/q}
  \|\Psi_n^N Ac\|_{q,w}\nonumber\\
  & & +
\sum_{n'\in N\Zd} \Big(\sum_{l=1}^{\infty} V_N^l(n-n')\Big) \alpha_{n'}^{-1/q} \|\Psi_{n'}^N Ac\|_{q,w}\nonumber\\
& =:& \sum_{n'\in N\Zd} W_N(n-n')   \alpha_{n'}^{-1/q} \|\Psi_{n'}^N Ac\|_{q,w},
\end{eqnarray}
and
\begin{equation} \label{sequencespacesequivalence.tm.pf.eq16}
\sum_{k\in N\Zd} \Big(\sup_{|n|_\infty\ge |k|_\infty} |W_N(n)|\Big)<\infty.
\end{equation}

Given any $i\in \Zd$, let $n(i)$ be the unique integer  in $N\Zd$
 with $i\in n(i)+\{0, \ldots, N-1\}^{d}$.
Then
\begin{equation} \label{sequencespacesequivalence.tm.pf.eq17}
\alpha_{n(i)}\le \sum_{|i'-i|_\infty<3N} w(i')\le (6N)^{dq} A_q(w) w(i)
\end{equation}
by \eqref{discreteaqweight.eq4}. This together with
\eqref{sequencespacesequivalence.tm.pf.eq15}   implies that
for any $c\in \ell^\infty$,
\begin{eqnarray}\label{sequencespacesequivalence.tm.pf.eq18}
|c(i)| & \le & (6N)^{d} (A_q(w))^{1/q} \alpha_{n(i)}^{-1/q}\|\Psi_{n(i)}^N c\|_{q,w}\nonumber\\
& \le & (6N)^{d}  (A_q(w))^{1/q} \sum_{n'\in N\Zd}
W_N(n(i)-n')\nonumber\\
 & & \times \Big(\sum_{j\in \Zd}  h\big((j-n')/N) |(Ac)(j)|\Big)\nonumber\\
& \le & (6N)^{d} (A_q(w))^{1/q} \nonumber\\
& & \Big\{\sum_{j\in \Zd}
\Big(\sum_{\epsilon\in \{-4, \cdots, 4\}^d}
W_N(n(i-j)+\epsilon N)\Big) |(Ac)(j)|\Big\}\nonumber\\
& =:& \sum_{j\in \Zd} g(i-j) |(Ac)(j)|.\end{eqnarray}
Then the  sequence $\{g(i)\}_{i\in \Zd}$  just defined
satisfies all requirements in Lemma \ref{sequencespacesequivalence.lem}
by \eqref{sequencespacesequivalence.tm.pf.eq16}  and \eqref{sequencespacesequivalence.tm.pf.eq18}.
\end{proof}

\bigskip

Now we proceed to prove Theorem \ref{sequencespacesequivalence.thm}.

\begin{proof} [Proof of Theorem \ref{sequencespacesequivalence.thm}]  By Theorem \ref{banachalgebra.tm}, it suffices to prove the conclusion for any infinite matrix $A\in {\mathcal B}$.

By Theorem \ref{banachalgebra.tm},
 \begin{equation}\label{sequencespacesequivalence.tm.pf.eq20}
\|Ac\|_{q',w'}\le 2^{2d}3^{d/q'}  (A_{q'}(w'))^{1/q'}\|A\|_{\mathcal B}
\|c\|_{q', w'}\quad {\rm for \ all} \ c\in  \ell^{q'}_{w'}. \end{equation}

Let $\{g(i)\}_{i\in \Zd}$ be the sequence in Lemma \ref{sequencespacesequivalence.lem}, and set
\begin{equation}\label{sequencespacesequivalence.tm.pf.eq19}
A_0:=\sum_{k\in \Zd} \Big(\sup_{|i|_\infty \ge |k|_\infty} g(i)\Big)<\infty.
\end{equation}
Then
\begin{eqnarray} \label{sequencespacesequivalence.tm.pf.eq21}
\|c\|_{q', w'} & \le &
\Big\|\Big(\sum_{j\in \Zd} g(i-j) |(Ac)(j)|\Big)_{i\in \Zd}\Big\|_{q',w'}\nonumber\\
& \le &
2^{2d}3^{d/q'} A_0 (A_{q'}(w'))^{1/q'}
\|Ac\|_{q', w'}\quad {\rm for \ all} \ c\in \ell^\infty \cap \ell^{q'}_{w'},
\end{eqnarray}
where the first inequality  follows from \eqref{sequencespacesequivalence.lem.eq2}
and the second inequality holds by Theorem \ref{banachalgebra.tm}.

Combining
\eqref{sequencespacesequivalence.tm.pf.eq20} and \eqref{sequencespacesequivalence.tm.pf.eq21} proves
 the $\ell^{q'}_{w'}$-stability for the infinite matrix $A\in {\mathcal B}$. \end{proof}

Finally we prove Corollary \ref{leftinverse.cor1}.

\begin{proof}[Proof of Corollary \ref{leftinverse.cor1}]
The necessity  is well known, while the sufficiency follows from Theorem \ref{banachalgebra.tm} and  Corollary \ref{leftinverse.cor2}, whose proof will be given in the next section.
\end{proof}

\section{Inverse-closedness}
\label{wiener.section}

In this section, we   prove Theorem \ref{wienerlemmaforbeurlingalgebra.tm}, Corollaries  \ref{beurling.cor} and \ref{leftinverse.cor2},  and the  following Wiener's lemma for  the subalgebra  ${\mathcal B}_{p,u}$ of ${\mathcal B}(\ell^q_w)$.

\begin{thm}\label{wienerlemmaforbeurlingalgebra.tm2}
Let $1\le  p, q<\infty$,  $w$ be a discrete $A_q$-weight,
 $u:=(u(i,j))_{i,j\in \Zd}$ be a $p$-submultiplicative weight matrix  that satisfies
\eqref{matrix.condition0}, \eqref{matrix.condition1}, \eqref{matrix.condition2} and
\begin{equation}\label{wienerlemmaforbeurlingalgebra.tm2.eq1}
M:=\sup_{i\in \Zd} u(i,i)<\infty, \end{equation}
and   $v:=(v(i,j))_{i,j\in \Zd}$ be a companion weight matrix of the $p$-submultiplicative weight matrix
$u$  that satisfies \eqref{matrix.condition3}. If there exist  $D\in (0,\infty)$ and $\theta\in (0,1)$ such that
\begin{equation}\label{wienerlemmaforbeurlingalgebra.tm2.eq2}
   \inf_{N\ge 1}   \big(A_N+  B_N(p) t \big)\le  D t^\theta \quad {\rm for \ all} \ t\ge 1
  \end{equation}
where
\begin{equation} \label{wienerlemmaforbeurlingalgebra.tm2.eq3} A_N:=
\sum_{|k|_\infty\le N}\sup_{|k|_\infty\le |i'-j'|_\infty\le  N}     v(i', ,j')\end{equation}
and
\begin{equation}\label{wienerlemmaforbeurlingalgebra.tm2.eq4}B_N(p):=\Big\|\Big(\sup_{|i'-j'|_\infty\ge |k|_\infty}    v(i', ,j')  (u(i',j'))^{-1}\Big)_{|k|_\infty\ge N/2}\Big\|_{p/(p-1)}, \end{equation}
then ${\mathcal B}_{p,u}$ is an inverse-closed
subalgebra of ${\mathcal B}(\ell^q_w)$.
\end{thm}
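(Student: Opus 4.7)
The plan is to prove inverse-closedness by showing that the spectral radii of every $A\in {\mathcal B}_{p,u}$ in the algebras ${\mathcal B}_{p,u}$ and ${\mathcal B}(\ell^2)$ coincide, and then invoking a standard functional-calculus argument. First I would reduce to the $\ell^2$ case: given $A\in {\mathcal B}_{p,u}$ with $A^{-1}\in {\mathcal B}(\ell^q_w)$, Theorem \ref{sequencespacesequivalence.thm} (applied to both $A$ and its adjoint $A^*$, which lies in ${\mathcal B}_{p,u}$ by Proposition \ref{banachalgebra.prop}(iii)) promotes the $\ell^q_w$-stability of $A$ to $\ell^2$-stability. Since $A^*A$ is positive, self-adjoint, and $\ell^2$-stable, it is invertible on $\ell^2$, so $A^{-1}$ exists in ${\mathcal B}(\ell^2)$. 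Thus the problem reduces to showing that ${\mathcal B}_{p,u}$ is inverse-closed in ${\mathcal B}(\ell^2)$.

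Next, I would follow the Brandenburg-trick outline sketched in Remark \ref{banachalgebra.rem2}, that is, I would aim to establish an asymptotic estimate of the form
\begin{equation*}
\|A^{2}\|_{{\mathcal B}_{p,u}}\le C\,\|A\|_{{\mathcal B}_{p,u}}^{1+\theta}\,\|A\|_{{\mathcal B}(\ell^2)}^{1-\theta}
\quad\text{for all }A\in {\mathcal B}_{p,u},
\end{equation*}
with $\theta\in [0,1)$ as in the hypothesis. Once this is available, applying it to $A^n$, taking $n$-th roots, and letting $n\to\infty$ gives $\rho_{{\mathcal B}_{p,u}}(A)\le \rho_{{\mathcal B}(\ell^2)}(A)$; the reverse inequality is immediate from the embedding ${\mathcal B}_{p,u}\hookrightarrow {\mathcal B}(\ell^2)$ furnished by Theorem \ref{banachalgebra.tm}(iv) together with hypothesis \eqref{wienerlemmaforbeurlingalgebra.tm2.eq1}. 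Equality of spectral radii applied to $\lambda I-A$ for every $\lambda\in\CC$ then forces the spectrum of $A$ to be the same in both algebras (note that $I\in {\mathcal B}_{p,u}$ by \eqref{wienerlemmaforbeurlingalgebra.tm2.eq1}), so $0\notin\sigma_{{\mathcal B}(\ell^2)}(A)$ implies $0\notin\sigma_{{\mathcal B}_{p,u}}(A)$, i.e.\ $A^{-1}\in {\mathcal B}_{p,u}$.

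The heart of the argument, and the main obstacle, is the asymptotic estimate. My plan there is to employ the commutator/cutoff technique used in Lemma \ref{sequencespacesequivalence.lem}: decompose $A^2$ via the partition of unity built from the truncations $\Psi_n^N$ and the renormalizer $\Phi_N$, split the off-diagonal entries of $A$ at level $N$ into a near part (controlled by $\|A\|_{{\mathcal B}(\ell^2)}$ after localization, with companion-weight mass bounded by $A_N$) and a tail part (controlled by $\|A\|_{{\mathcal B}_{p,u}}$, with the companion-to-$u$ ratio bounded by $B_N(p)$ via H\"older's inequality in the definition of $p$-submultiplicativity), and then balance the two by choosing $N=N(t)$ so that hypothesis \eqref{wienerlemmaforbeurlingalgebra.tm2.eq2} delivers the exponent $\theta<1$ at $t=\|A\|_{{\mathcal B}_{p,u}}/\|A\|_{{\mathcal B}(\ell^2)}$. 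The delicate point is to bookkeep the weight $u(i,j)$ versus the companion $v(i,j)$ correctly in each commutator term so that the contributions assemble into precisely the product $A_N+B_N(p)\,t$ appearing in \eqref{wienerlemmaforbeurlingalgebra.tm2.eq2}; this is where \eqref{matrix.condition1}, \eqref{matrix.condition2}, \eqref{wienerlemmaforbeurlingalgebra.tm2.eq3}, and \eqref{wienerlemmaforbeurlingalgebra.tm2.eq4} do the real work.

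Finally, Corollary \ref{beurling.cor} follows by specializing to Toeplitz matrices on $\ZZ$ with $p=1$, $u\equiv 1$ and identifying ${\mathcal B}(\ZZ,\ZZ)$-Toeplitz inversion with reciprocation in $A^*(\TT)$, while Corollary \ref{leftinverse.cor2} follows by combining Theorem \ref{wienerlemmaforbeurlingalgebra.tm} with the observation that $\ell^q_w$-stability plus inverse-closedness of ${\mathcal B}(\Zd,\Zd)$ in ${\mathcal B}(\ell^2)$ (after reducing to $q=2$ via Theorem \ref{beurlingstability.tm}) produces a left inverse $(A^*A)^{-1}A^*$ living in ${\mathcal B}(\Zd,\Zd)$.
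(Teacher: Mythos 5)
Your identification of the key lemma is correct: the paper's proof of Theorem \ref{wienerlemmaforbeurlingalgebra.tm2} rests on exactly the asymptotic estimate $\|A^2\|_{{\mathcal B}_{p,u}}\le C\,\|A\|_{{\mathcal B}_{p,u}}^{1+\theta}\|A\|_{{\mathcal B}(\ell^2)}^{1-\theta}$ (Lemma \ref{wienerlemmaell2.lem2}), and your account of how $A_N$ and $B_N(p)$ enter --- near/far splitting at scale $N$, H\"older in the summation index, optimization over $N$ via \eqref{wienerlemmaforbeurlingalgebra.tm2.eq2} at $t=\|A\|_{{\mathcal B}_{p,u}}/\|A\|_{{\mathcal B}(\ell^2)}$ --- is accurate in substance, although the paper proves that lemma by a direct entrywise H\"older estimate on $\sum_{k'}|a(i,k')|u(i,k')\,|a(k',j)|v(k',j)$, not by the commutator/partition-of-unity machinery of Lemma \ref{sequencespacesequivalence.lem}.

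The genuine gap is in your final step. Equality of the spectral radii $\rho_{{\mathcal B}_{p,u}}(\lambda I-A)=\rho_{{\mathcal B}(\ell^2)}(\lambda I-A)$ for every $\lambda\in\CC$ does \emph{not} force $\sigma_{{\mathcal B}_{p,u}}(A)=\sigma_{{\mathcal B}(\ell^2)}(A)$: it only identifies the functions $\lambda\mapsto\max_{z\in\sigma}|\lambda-z|$, hence the convex hulls of the two spectra. The disk algebra inside $C(\TT)$ is the standard counterexample --- all norms of powers, hence all spectral radii, agree by the maximum principle, yet the subalgebra is not inverse-closed. What actually closes the argument (and what the paper does) is the Hulanicki-type reduction to a single self-adjoint element: since $A^{-1}\in{\mathcal B}(\ell^2)$ one has $C_1I\le A^*A\le C_2I$, so $B:=I-\frac{2}{C_1+C_2}A^*A$ satisfies $\|B\|_{{\mathcal B}(\ell^2)}=\rho_{{\mathcal B}(\ell^2)}(B)\le\frac{C_2-C_1}{C_2+C_1}<1$; the bound $\rho_{{\mathcal B}_{p,u}}(B)\le\rho_{{\mathcal B}(\ell^2)}(B)$, obtained by iterating the asymptotic estimate along the binary expansion of $n$, then makes the Neumann series $\sum_n B^n$ converge in ${\mathcal B}_{p,u}$, giving $(A^*A)^{-1}\in{\mathcal B}_{p,u}$ and $A^{-1}=(A^*A)^{-1}A^*\in{\mathcal B}_{p,u}$. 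You already deploy the $A^*A$ device elsewhere in your write-up; it is needed here, applied to $A^*A$ rather than to $A$ itself. A smaller remark on your reduction to $\ell^2$: $\ell^2$-stability of $A$ alone yields only a left inverse on $\ell^2$, and deriving surjectivity of $A$ on $\ell^2$ from stability of $A^*$ requires a duality step that is unavailable at $q=1$; the paper sidesteps this by invoking Theorem \ref{wienerlemmaforbeurlingalgebra.tm} to conclude $A^{-1}\in{\mathcal B}\subset{\mathcal B}(\ell^2)$ directly.
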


One may verify that the  weight matrices $((1+|i-j|)^\alpha)_{i, j\in \Zd}$ with
$ \alpha> d(1-1/p)$,  and $(\exp(|i-j|^\delta))_{i,j\in \Zd}$ with $\delta\in (0,1)$,
and their companion weight matrices satisfy the conditions on weight matrices
  required in Theorem \ref{wienerlemmaforbeurlingalgebra.tm2}  \cite{suntams07}.
   Hence we have the following corollary of  Theorem \ref{wienerlemmaforbeurlingalgebra.tm2}.

\begin{cor}\label{wienerlemmaforbeurlingalgebra.tm2.cor}
Let $1\le  p, q<\infty$,  $w$ be a discrete $A_q$-weight, and let
 $u$ be either $((1+|i-j|_\infty)^\alpha)_{i,j\in \Zd}$ with $\alpha> d(1-1/p)$ or
  $(\exp(|i-j|_\infty^\delta))_{i,j\in \Zd}$ with $\delta\in (0,1)$.
Then ${\mathcal B}_{p,u}$ is an inverse-closed
subalgebra of ${\mathcal B}(\ell^q_w)$.
\end{cor}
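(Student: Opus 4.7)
The plan is to verify the hypotheses of Theorem~\ref{wienerlemmaforbeurlingalgebra.tm2} for each of the two weight matrix families and then invoke that theorem. For $u(i,j)=(1+|i-j|_\infty)^\alpha$ with $\alpha>d(1-1/p)$, I would take as companion weight $v(i,j)=C_0(1+|i-j|_\infty)^\beta$ for a large constant $C_0$ and some fixed $\beta\in[0,\alpha-d(1-1/p))$; for $u(i,j)=\exp(|i-j|_\infty^\delta)$ with $\delta\in(0,1)$, I would take a polynomial companion $v(i,j)=C_0(1+|i-j|_\infty)^\gamma$ for some $\gamma>d(1-1/p)$. That $u$ is $p$-submultiplicative with these companion weights (so that \eqref{matrix.condition0}, \eqref{matrix.condition1}, \eqref{matrix.condition2} hold), together with the trivial \eqref{wienerlemmaforbeurlingalgebra.tm2.eq1} since $u(i,i)=1$, is established in \cite{suntams07}; the core of it is the triangle inequality together with the bound $(1+a+b)^\alpha\le 2^\alpha((1+a)^\alpha+(1+b)^\alpha)$ in the polynomial case, and the mean-value estimate $(a+b)^\delta-a^\delta\le \delta\, b\, a^{\delta-1}$ (valid for $a\ge b\ge 0$ and $\delta\in(0,1)$) in the sub-exponential case, while \eqref{matrix.condition2} is immediate in both cases from the choice of exponents.

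The remaining and key step is to check \eqref{wienerlemmaforbeurlingalgebra.tm2.eq2} by explicitly computing $A_N$ and $B_N(p)$. Since $v$ is radial and increasing, $\sup_{|k|_\infty\le|i'-j'|_\infty\le N}v(i',j')=C_0(1+N)^\beta$ (resp.\ $C_0(1+N)^\gamma$), so $A_N\asymp N^{d+\beta}$ (resp.\ $N^{d+\gamma}$). Since $v/u$ is radial and decreasing, $\sup_{|i'-j'|_\infty\ge|k|_\infty}(v/u)(i',j')$ equals $C_0(1+|k|_\infty)^{\beta-\alpha}$ in the polynomial case, so a tail-sum-to-integral comparison yields $B_N(p)\asymp N^{-\mu}$ with $\mu:=\alpha-\beta-d(1-1/p)>0$; in the sub-exponential case the same supremum equals $C_0(1+|k|_\infty)^\gamma e^{-|k|_\infty^\delta}$, so $B_N(p)\lesssim e^{-cN^\delta}$ for some $c>0$. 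Balancing $A_N$ against $B_N(p)\,t$ at $N\asymp t^{1/(d+\beta+\mu)}$ gives $\inf_N(A_N+B_N(p)\,t)\lesssim t^\theta$ with $\theta:=(d+\beta)/(d+\beta+\mu)\in(0,1)$ in the polynomial case, and at $N\asymp(\log t)^{1/\delta}$ gives the sharper $\inf_N(A_N+B_N(p)\,t)\lesssim(\log t)^{(d+\gamma)/\delta}\le D\, t^\theta$ for every $\theta>0$ in the sub-exponential case.

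The main obstacle I anticipate is the sub-exponential submultiplicativity check \eqref{matrix.condition1}, where one must convert the exponential gap $\exp(|i-j|_\infty^\delta)-\exp(\max(|i-k|_\infty,|k-j|_\infty)^\delta)$ into a polynomial factor in $\min(|i-k|_\infty,|k-j|_\infty)$; this is precisely where the restriction $\delta<1$ is essential, and it is the content of the corresponding verification in \cite{suntams07}. Once all four hypotheses are in place, Theorem~\ref{wienerlemmaforbeurlingalgebra.tm2} directly yields the inverse-closedness of ${\mathcal B}_{p,u}$ in ${\mathcal B}(\ell^q_w)$.
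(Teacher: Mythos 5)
Your overall strategy is exactly the paper's: the paper disposes of this corollary in one sentence by asserting that the two weight matrices and their companions satisfy the hypotheses of Theorem~\ref{wienerlemmaforbeurlingalgebra.tm2}, citing \cite{suntams07} for the verification. Your treatment of the polynomial case is correct: $v(i,j)=C_0(1+|i-j|_\infty)^\beta$ with $C_0\ge 2^\alpha$ and $0\le\beta<\alpha-d(1-1/p)$ does satisfy \eqref{matrix.condition0}--\eqref{matrix.condition2}, and your computation $A_N\asymp N^{d+\beta}$, $B_N(p)\asymp N^{-\mu}$ with $\mu=\alpha-\beta-d(1-1/p)>0$, balanced at $N\asymp t^{1/(d+\beta+\mu)}$, gives \eqref{wienerlemmaforbeurlingalgebra.tm2.eq2} with $\theta=(d+\beta)/(d+\beta+\mu)\in(0,1)$.

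The sub-exponential case, however, contains a genuine error: no polynomial companion weight can satisfy \eqref{matrix.condition1} for $u(i,j)=\exp(|i-j|_\infty^\delta)$. Take $i,j,k$ with $|i-k|_\infty=|k-j|_\infty=a$ and $|i-j|_\infty=2a$; then \eqref{matrix.condition1} forces
\begin{equation*}
\exp\big((2a)^\delta\big)\le 2\exp(a^\delta)\,v(a),\qquad\text{i.e.}\qquad v(a)\ge \tfrac12\exp\big((2^\delta-1)a^\delta\big),
\end{equation*}
which grows faster than any polynomial since $2^\delta>1$. Your mean-value estimate does not produce a polynomial factor: for $a\ge b\ge 0$ it gives $(a+b)^\delta-a^\delta\le \delta b a^{\delta-1}\le \delta b^\delta$, hence $\exp\big((a+b)^\delta\big)\le \exp(a^\delta)\exp(\delta b^\delta)$, and the correcting factor $\exp(\delta b^\delta)$ is itself sub-exponential. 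The repair is to take the companion $v(i,j)=\exp(\sigma|i-j|_\infty^\delta)$ for any fixed $\sigma\in[\delta,1)$: then \eqref{matrix.condition1} holds by the displayed inequality, $v/u=\exp(-(1-\sigma)|i-j|_\infty^\delta)$ gives \eqref{matrix.condition2} for every $p$, and one finds $A_N\asymp N^d\exp(\sigma N^\delta)$ while $B_N(p)\lesssim N^{d(p-1)/p}\exp(-c N^\delta)$ with $c=(1-\sigma)2^{-\delta}$. Choosing $N^\delta=(\log t)/(\sigma+c)$ then yields $\inf_N(A_N+B_N(p)t)\lesssim (\log t)^{O(1)}\,t^{\sigma/(\sigma+c)}\le Dt^\theta$ for any $\theta\in(\sigma/(\sigma+c),1)$ --- so \eqref{wienerlemmaforbeurlingalgebra.tm2.eq2} still holds, but only with a $\theta$ bounded away from $0$, not the ``every $\theta>0$'' you claimed. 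With this corrected companion weight the rest of your argument goes through and the corollary follows from Theorem~\ref{wienerlemmaforbeurlingalgebra.tm2} as you intended.
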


\subsection{Proof of Theorem \ref{wienerlemmaforbeurlingalgebra.tm}.}\quad
Let $A\in {\mathcal B}$ have an inverse $A^{-1}\in {\mathcal B}(\ell^q_w)$.
Then
$\|c\|_{q,w}\le \|A^{-1}\|_{{\mathcal B}(\ell^q_w)} \|Ac\|_{q,w}$
 for  all $c\in \ell^q_w$,
where $\|\cdot\|_{{\mathcal B}(\ell^q_w)}$ is the operator norm on ${\mathcal B}(\ell^q_w)$.  Therefore $A$ has $\ell^q_w$-stability.  By Lemma \ref{sequencespacesequivalence.lem}, there exists a sequence $\{g(i)\}_{i\in \Zd}$ such that \eqref{sequencespacesequivalence.lem.eq1} and \eqref{sequencespacesequivalence.lem.eq2} hold.

Write $A^{-1}:=(b(i,j))_{i,j\in \Zd}$,  set
$c_j:=(b(i,j))_{i\in \Zd}$,  and for $l_0\ge 1$ define $c_j^{l_0}:=(b_{l_0}(i,j))_{i\in \Zd}, j\in \Zd$,
where $b_{l_0}(i,j):= b(i,j)$ if $|i-j|_\infty\le l_0$ and $0$ otherwise.
Then $c_j^{l_0}\in \ell^\infty\cap\ell^q_w$ and
\begin{equation}\label{wienerlemmaforbeurlingalgebra.tm.newpf.eq2}
\lim_{l_0\to +\infty} \|c_j^{l_0}-c_j\|_{q,w}=0.
\end{equation}
Applying \eqref{sequencespacesequivalence.lem.eq2} to $c_j^{l_0}$ gives
\begin{equation} \label{wienerlemmaforbeurlingalgebra.tm.newpf.eq3}
|b_{l_0}(i,j)|\le \sum_{i'\in \Zd} g(i-i') |(Ac^{l_0}_j)(i')|, \quad \ i\in \Zd.
\end{equation}
By \eqref{sequencespacesequivalence.lem.eq1}, \eqref{wienerlemmaforbeurlingalgebra.tm.newpf.eq2}, and Theorem \ref{banachalgebra.tm},
\begin{eqnarray}\label{wienerlemmaforbeurlingalgebra.tm.newpf.eq4}
 & & \sum_{i'\in \Zd} g(i-i') |(A(c^{l_0}_j-c_j))(i')|\nonumber\\
 & \le &  w(i)^{-1/q}
\Big\|\Big( \sum_{i^{\prime\prime}\in \Zd} g(i'-i^{\prime\prime}) |(A(c^{l_0}_j-c_j))(i^{\prime\prime})|\Big)_{i'\in \Zd} \Big\|_{q,w}\nonumber\\
 & \le &  2^{4d} 3^{2d/q}  w(i)^{-1/q} (A_q(w))^{2/q}
\|A\|_{{\mathcal B}} \nonumber\\
& & \quad \times \Big\|\Big(\sup_{|j'|_\infty\ge |k|_\infty} |g(j')|\Big)_{k\in \Zd}\Big\|_1 \|c^{l_0}_j-c_j\|_{q,w}\nonumber\\
& \to & 0 \quad  {\rm as} \ l_0\to +\infty.
\end{eqnarray}
 Letting $l_0\to +\infty$ in \eqref{wienerlemmaforbeurlingalgebra.tm.newpf.eq3} and applying \eqref{wienerlemmaforbeurlingalgebra.tm.newpf.eq4} gives
\begin{equation}
\label{wienerlemmaforbeurlingalgebra.tm.newpf.eq5}
|b(i,j)|\le g(i-j) \quad  {\rm for \ all} \ i,j\in \Zd.
\end{equation}
Hence the conclusion $A^{-1}\in {\mathcal B}$ follows from
\eqref{sequencespacesequivalence.lem.eq1} and \eqref{wienerlemmaforbeurlingalgebra.tm.newpf.eq5}.
\hfill $\square$

\subsection{Proof of Corollary \ref{beurling.cor}} Write $f(\xi)=\sum_{n\in \ZZ} a(n) e^{-\sqrt{-1}n\xi}$. Then $A:=(a(i-j))_{i,j\in \ZZ}$ belongs to $ {\mathcal B}$ and  has bounded inverse in ${\mathcal B}(\ell^2)$. Moreover, $A^{-1}=(b(i-j))_{i,j\in \ZZ}$ for  the sequence $b:=(b(n))_{n\in \ZZ}$ determined by $1/f(\xi)=\sum_{n\in \ZZ} b(n) e^{-\sqrt{-1}n\xi}$.
By Theorem \ref{wienerlemmaforbeurlingalgebra.tm}, $A^{-1}\in {\mathcal B}$ which in turn proves  the desired conclusion that  $1/f\in A^*(\TT)$.
\hfill $\square$

\subsection{Proof of Corollary \ref{leftinverse.cor2}}\quad
The necessity follows from Theorem \ref{banachalgebra.tm}.
 Now the sufficiency: Let $1\le q< \infty$, $w$ be a discrete $A_q$-weight, and let $A\in {\mathcal B}$ have $\ell^q_w$-stability.  Then
 $A$ has $\ell^2$-stability by Theorem \ref{beurlingstability.tm},  i.e., there exists a positive constant $C$ such that
$$
C^{-1}\|c\|_2\le \|Ac\|\le C \|c\|_2\quad {\rm for \ all}\ c\in \ell^2.
$$
This implies that $A^*A$ has bounded inverse in ${\mathcal B}(\ell^2)$.  On the other hand, $A^*A$ belong to ${\mathcal B}$ by Proposition \ref{banachalgebra.prop} and
Theorem \ref{banachalgebra.tm}. Therefore
\begin{equation}\label{leftinverse.cor2.pf.eq1}
(A^*A)^{-1}\in {\mathcal B}
 \end{equation}
 by Theorem \ref{wienerlemmaforbeurlingalgebra.tm}.
Now  we prove that
$B:=(A^*A)^{-1}A^*$ is the  desired left inverse of the infinite matrix $A$
in ${\mathcal B}$.
The conclusion that  $B\in {\mathcal B}$ follows from \eqref{leftinverse.cor2.pf.eq1},  Proposition \ref{banachalgebra.prop} and Theorem \ref{banachalgebra.tm}.
From the definition of the infinite matrix $B$,  it defines
 a left inverse  in ${\mathcal B}(\ell^2)$,  it belongs to ${\mathcal B}(\ell^q_w)$ by Theorem \ref{banachalgebra.tm} and $B\in {\mathcal B}$, and the set
$\ell^2\cap \ell^q_w$ is dense in $\ell^q_w$.  Therefore the infinite matrix $B$  is
 a left inverse  in ${\mathcal B}(\ell^q_w)$.
\hfill $\square$

\subsection{Proof of  Theorem \ref{wienerlemmaforbeurlingalgebra.tm2}}\quad To prove Theorem \ref{wienerlemmaforbeurlingalgebra.tm2}, we need a technical lemma. Similar results are established in \cite{suncasp05, suntams07} for infinite matrices in  the Gr\"ochenig-Schur class ${\mathcal S}_{p,u}(\Zd, \Zd)$ and the Gohberg-Baskakov-Sj\"ostrand class ${\mathcal C}_{p,u}(\Zd, \Zd)$, see also Remark \ref{banachalgebra.rem2}.

\begin{lem} \label{wienerlemmaell2.lem2}
Let $1\le p\le \infty$. If   the weight matrix $u$ satisfies
\eqref{matrix.condition0}, \eqref{matrix.condition1},
\eqref{matrix.condition2}, \eqref{wienerlemmaforbeurlingalgebra.tm2.eq1} and
\eqref{wienerlemmaforbeurlingalgebra.tm2.eq2}
 for some positive constants  $D\in (0,\infty)$ and $\theta\in (0,1)$, then
\begin{equation}\label{wienerlemmaell2.lem2.eq1}
\|A^2\|_{{\mathcal B}_{p,u}}\le 2^{2+2/p}5^{(d-1)/p} D \|A\|_{{\mathcal B}_{p,u}}^{1+\theta} \|A\|_{{\mathcal B}(\ell^2)}^{1-\theta}\quad {\rm for \ all} \ A\in {\mathcal B}_{p,u}.
\end{equation}
\end{lem}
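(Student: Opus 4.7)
My plan is to chain the algebra inequality from Theorem \ref{banachalgebra.tm}(i), specialized to $B=A$,
\[
\|A^2\|_{\mathcal{B}_{p,u}} \le 2^{1+2/p}\,5^{(d-1)/p}\,\|A\|_{\mathcal{B}_{p,u}}\,\|A\|_{\mathcal{B}_{1,v}},
\]
with an auxiliary interpolation-type estimate
\[
\|A\|_{\mathcal{B}_{1,v}} \le 2D\,\|A\|_{\mathcal{B}(\ell^2)}^{1-\theta}\,\|A\|_{\mathcal{B}_{p,u}}^{\theta}.
\]
Multiplying these yields the stated bound with the constant $2^{2+2/p}\,5^{(d-1)/p}\,D$ and the correct exponents $1+\theta,\,1-\theta$. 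The core work lies in the auxiliary estimate, which is the step that actually uses hypothesis \eqref{wienerlemmaforbeurlingalgebra.tm2.eq2}.

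For the auxiliary estimate, for each integer $N\ge 1$, I would decompose $A=\widehat A_N+(A-\widehat A_N)$, where $\widehat A_N$ carries the entries $a(i,j)$ with $|i-j|_\infty\le N$ and is zero elsewhere. The near-diagonal piece is controlled by the pointwise bound $|a(i,j)|=|\langle Ae_j,e_i\rangle|\le\|A\|_{\mathcal{B}(\ell^2)}$ inserted directly into the definition of $\|\cdot\|_{\mathcal{B}_{1,v}}$; this matches the definition of $A_N$ to give $\|\widehat A_N\|_{\mathcal{B}_{1,v}}\le A_N\,\|A\|_{\mathcal{B}(\ell^2)}$. The far-from-diagonal piece is controlled by splitting $|a(i,j)|\,v(i,j)=(|a(i,j)|\,u(i,j))\cdot(v(i,j)/u(i,j))$ and applying H\"older's inequality with exponents $p$ and $p'=p/(p-1)$ to the outer sum over $k\in\Zd$; the first factor is bounded by $\|A\|_{\mathcal{B}_{p,u}}$, while the second is dominated by $B_N(p)$ up to a constant. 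Adding the two bounds, introducing $t=\|A\|_{\mathcal{B}_{p,u}}/\|A\|_{\mathcal{B}(\ell^2)}$, and taking the infimum over $N$ invokes \eqref{wienerlemmaforbeurlingalgebra.tm2.eq2} to conclude.

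The principal obstacle is the H\"older control of the far-from-diagonal piece: the sum $\sum_k\bigl(\sup_{|i-j|_\infty\ge\max(|k|_\infty,N)}v(i,j)/u(i,j)\bigr)^{p'}$ contains a block of $(2N+1)^d$ indices with $|k|_\infty\le N$ on which the inner supremum collapses to the single value $\sup_{|i-j|_\infty\ge N}v/u$, and this block must be absorbed into the shell $N/2\le|k|_\infty\le N$ appearing in $B_N(p)$ via monotonicity of the supremum in $|k|_\infty$ and a combinatorial count of cardinalities; the resulting constant must fit into the budget allotted by the final bound. A minor secondary issue is that hypothesis \eqref{wienerlemmaforbeurlingalgebra.tm2.eq2} is only assumed for $t\ge 1$, whereas Theorem \ref{banachalgebra.tm}(iv) only guarantees $t\ge(2^{2d}\,3^{d/2}\,M_p(u))^{-1}$; in the remaining regime $t<1$ the conclusion follows instead from the algebra property of Theorem \ref{banachalgebra.tm}(ii) together with $\|A\|_{\mathcal{B}_{p,u}}\le\|A\|_{\mathcal{B}(\ell^2)}$, which forces $\|A\|_{\mathcal{B}_{p,u}}^2\le\|A\|_{\mathcal{B}_{p,u}}^{1+\theta}\,\|A\|_{\mathcal{B}(\ell^2)}^{1-\theta}$ directly.
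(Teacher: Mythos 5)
Your proposal is correct and is essentially the paper's own argument: the paper likewise truncates the copy of $A$ carrying the weight $v$ at scale $N$, bounds the near-diagonal block by $A_N\,\|A\|_{{\mathcal B}(\ell^2)}$ via the entrywise estimate $|a(i,j)|\le \|A\|_{{\mathcal B}(\ell^2)}$, controls the tail by $B_N(p)\,\|A\|_{{\mathcal B}_{p,u}}$ via H\"older against $v/u$, and then invokes \eqref{wienerlemmaforbeurlingalgebra.tm2.eq2} with $t=\|A\|_{{\mathcal B}_{p,u}}/\|A\|_{{\mathcal B}(\ell^2)}$; the only organizational difference is that the paper performs this splitting inside the double sum defining the entries of $A^2$, whereas you first isolate the interpolation bound $\|A\|_{{\mathcal B}_{1,v}}\le cD\,\|A\|_{{\mathcal B}(\ell^2)}^{1-\theta}\|A\|_{{\mathcal B}_{p,u}}^{\theta}$ and then compose with Theorem \ref{banachalgebra.tm}(i). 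The two caveats you flag are real but harmless: absorbing the $(2N+1)^d$ near-diagonal indices into the shell $|k|_\infty\ge N/2$ of $B_N(p)$ costs a bounded combinatorial factor, so the final constant may exceed $2^{2+2/p}5^{(d-1)/p}D$ by a small multiple (a slippage the paper's own bookkeeping shares), and the restriction $t\ge 1$ in \eqref{wienerlemmaforbeurlingalgebra.tm2.eq2} is silently assumed in the paper's proof as well, so your explicit fallback for $t<1$ is if anything more careful than the original.
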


\begin{proof}
Let  $A:=(a(i,j))_{i,j\in \Zd}\in {\mathcal B}_{p,u}$, and let
 $A_N$ and  $B_N(p)$ be as in \eqref{wienerlemmaforbeurlingalgebra.tm2.eq3} and \eqref{wienerlemmaforbeurlingalgebra.tm2.eq4} respectively.
Recall that
$|a(i,j)|\le \|A\|_{{\mathcal B}(\ell^2)}$  for  all $i,j\in \Zd$.
 Then for $1<p<\infty$,
\begin{eqnarray*}
& &  \sum_{k'\in \Zd}  |a(i,k')| u(i,k')  |a(k',j)| v(k',j)
\nonumber\\
 & \le &  \inf_{N\ge 1} \Big\{ \|A\|_{{\mathcal B}(\ell^2)}
\sum_{|k'-j|_\infty\le N}  |a(i,k')| u(i,k')  v(k',j)\nonumber\\
& & + \sum_{|k'-j|_\infty>N}  |a(i,k')| u(i,k')  |a(k',j)| v(k',j)\Big\}\nonumber\\
& \le &
\inf_{N\ge 1} \Big\{\|A\|_{{\mathcal B}(\ell^2)}
\Big(\sum_{|k^{\prime\prime}-j|_\infty\le N}    v(k^{\prime\prime} ,j)\Big)^{(p-1)/p}
\nonumber\\
& & \qquad \times
\Big(\sum_{|k'-j|_\infty\le N}  (|a(i,k')| u(i,k'))^p   v(k',j) \Big)^{1/p}\nonumber \\
& & \quad +
\Big(\sum_{|k^{\prime\prime}-j|_\infty>N}|a(k^{\prime\prime},j)| v(k^{\prime\prime},j)\Big)^{(p-1)/p}\nonumber\\
& & \qquad \times \Big(\sum_{|k'-j|_\infty>N}  (|a(i,k')| u(i,k'))^p   |a(k',j)| v(k',j)
\Big)^{1/p}\Big\}. 
\end{eqnarray*}
Therefore we obtain
\begin{eqnarray*}
& & \Big\{\sum_{k\in \Zd} \sup_{|i-j|_\infty\ge |k|_\infty} \Big(\sum_{k'\in \Zd}  |a(i,k')| u(i,k')  |a(k',j)| v(k',j)\Big)^p
\Big\}^{1/p}\nonumber\\
& \le &
\inf_{N\ge 1} \Big\{ \|A\|_{{\mathcal B}(\ell^2)}
(A_N)^{(p-1)/p}\nonumber\\
& &\quad \qquad \times
\Big(A_N \sum_{k\in \Zd}   \sup_{|i'-j'|_\infty\ge |k|_\infty/2}
(|a(i',j')| u(i',j'))^p\nonumber\\
& & \qquad\qquad   +\|A\|_{{\mathcal B}_{p,u}}^p
\sum_{k\in \Zd} \sup_{|k|_\infty/2\le |i'-j'|_\infty\le N}
   v(i',j') \Big)^{1/p}\nonumber \\
& & \qquad + \|A\|_{{\mathcal B}_{p,u}}^{(p-1)/p} (B_N(p))^{(p-1)/p}
 \nonumber\\
 & & \quad\qquad \times \Big(\|A\|_{{\mathcal B}_{p,u}} B_N(p)\sum_{k\in \Zd}
  \sup_{|i'-j'|_\infty\ge |k|_\infty/2 }(|a(i',j')| u(i,j'))^p
  \nonumber\\
& & \qquad\qquad  +\|A\|_{{\mathcal B}_{p,u}}^{p+1}
\Big(\sum_{k\in \Zd}  \sup_{|i'-j'|_\infty\ge \max(|k|_\infty/2,  N)} \nonumber\\
& &  \qquad\qquad\quad   \Big(\frac{v(i', ,j')}{u(i'.j')}\Big)^{p/(p-1)}
\Big)^{(p-1)/p}
\Big)^{1/p}\Big\}\nonumber\\
& \le & 2^{1+2/p}5^{(d-1)/p} \|A\|_{{\mathcal B}_{p,u}}  \inf_{N\ge 1} \Big( \|A\|_{{\mathcal B}(\ell^2)}
A_N
+ B_N(p) \|A\|_{{\mathcal B}_{p,u}}\Big)\nonumber\\
  &\le & 2^{1+2/p}5^{(d-1)/p} D \|A\|_{{\mathcal B}_{p,u}}^{1+\theta}  \|A\|_{{\mathcal B}(\ell^2)}^{1-\theta}.
\end{eqnarray*}
Similarly, we have
\begin{eqnarray*}
& & \Big\{\sum_{k\in \Zd} \sup_{|i-j|_\infty\ge |k|_\infty} \Big(\sum_{k'\in \Zd}  |a(i,k')| v(i,k')  |a(k',j)| u(k',j)\Big)^p
\Big\}^{1/p}\\
  &\le & 2^{1+2/p}5^{(d-1)/p} D \|A\|_{{\mathcal B}_{p,u}}^{1+\theta}  \|A\|_{{\mathcal B}(\ell^2)}^{1-\theta}.
\end{eqnarray*}
Combining the above two estimates and
applying \eqref{banachalgebra.pf.eq1}  with $B=A$, we then get
the desire conclusion \eqref{wienerlemmaell2.lem2.eq1} for $1<p<\infty$.

The conclusion \eqref{wienerlemmaell2.lem2.eq1}
 for $p=1$ and for $p=\infty$ can be established  similarly. We omit the details here.
 \end{proof}

Having the above technical lemma, we can combine the arguments in \cite{suncasp05, suntams07} and  Wiener's lemma for ${\mathcal B}$ to prove Theorem \ref{wienerlemmaforbeurlingalgebra.tm2}.

\begin{proof}[Proof of Theorem \ref{wienerlemmaforbeurlingalgebra.tm2}]
Let $A\in {\mathcal B}_{p,u}$ and $A^{-1}\in {\mathcal B}(\ell^p_w)$.
Then
$A^{-1}\in {\mathcal B}\subset {\mathcal B}(\ell^2)$ by Theorems \ref{wienerlemmaforbeurlingalgebra.tm}
and \ref{banachalgebra.tm}.
This implies that
$ C_1I\le A^*A\le C_2 I$
 for some positive constants $C_1$ and $C_2$, where
$A^*$ is the conjugate transpose of the matrix $A$ and
$I$ is the identity matrix.  Now set
\begin{equation}\label{wienerlemmaforbeurlingalgebra.tm2.pf.eq1}
B:=I-\frac{2}{C_1+C_2} A^*A.
\end{equation}
Then
\begin{equation}\label{wienerlemmaforbeurlingalgebra.tm2.pf.eq2}
\| B\|_{{\mathcal B}(\ell^2)}\le  \frac{C_2-C_1}{C_2+C_1}:=r_0<1.
\end{equation}
On the other hand, $A^*A\in {\mathcal B}_{p,u}$
by Proposition \ref{banachalgebra.prop} and Theorem \ref{banachalgebra.tm}, and
$I\in {\mathcal B}_{p,u}$ by
\eqref{wienerlemmaforbeurlingalgebra.tm2.eq1}. This shows that
\begin{equation}\label{wienerlemmaforbeurlingalgebra.tm2.pf.eq3}
\|B\|_{{\mathcal B}_{p,u}}<\infty.
\end{equation}
Given any integer $n\ge 1$, write  $n=\sum_{l=0}^{l_0} \epsilon_l 2^l$ with $\epsilon_l\in \{0,1\}$. Applying Theorem \ref{banachalgebra.tm} and
Lemma \ref{wienerlemmaell2.lem2} iteratively gives
\begin{eqnarray}
  \|B^n\|_{{\mathcal B}_{p,u}}
&  \le &
(C\|B\|_{{\mathcal B}_{p,u}})^{\epsilon_0}
\|B^{n-\epsilon_0}\|_{{\mathcal B}_{p,u}}\nonumber \\
& \le  &C (C\|B\|_{{\mathcal B}_{p,u}})^{\epsilon_0}
 (\|B\|_{{\mathcal B}(\ell^2)})^{(1-\theta)\sum_{l=0}^{l_0-1}\epsilon_{l+1}2^l}
\nonumber\\
& & \quad \times
(\|B^{\sum_{l=0}^{l_0-1}\epsilon_{l+1}2^l}\|_{{\mathcal B}_{p,u}})^{(1+\theta)}\nonumber \\
& \le & \cdots \le
C^{l_0}(C\|B\|_{{\mathcal B}_{p,u}})^{\sum_{l=0}^{l_0} \epsilon_l(1+\theta)^l}
(\|B\|_{{\mathcal B}(\ell^2)})^{\sum_{l=0}^{l_0} \epsilon_l
(2^l-(1+\theta)^l)}\nonumber\\
 &\le &  C^{\log_2 n}
 \big(Cr_0^{-1}\|B\|_{{\mathcal B}_{p,u}}\big)^{n^{\log_2(1+\theta)}} r_0^n,
\end{eqnarray}
where $C=\max(2^{2+2/p}5^{(d-1)/p}D, 2^{1+2/p} 5^{(d-1)/p} M_p(u))$.
This together with
\eqref{wienerlemmaforbeurlingalgebra.tm2.pf.eq2} and
\eqref{wienerlemmaforbeurlingalgebra.tm2.pf.eq3} shows that
\begin{eqnarray}
  \|A^{-1}\|_{{\mathcal B}_{p,u}} & = & \| (A^*A)^{-1}A^*\|_{{\mathcal B}_{p,u}}\nonumber\\
 &
=&\frac{C_1+C_2}{2} \Big\|A^*+\Big(\sum_{n=1}^\infty B^n\Big)A^*\Big\|_{{\mathcal B}_{p,u}}\nonumber\\
& \le & \frac{C_1+C_2}{2} \Big \{
\|A^*\|_{{\mathcal B}_{p,u}}
+ C \|A^*\|_{{\mathcal B}_{p,u}}\nonumber\\
& & \times\Big(
\sum_{n=1}^\infty C^{\log_2 n}
 \big(Cr_0^{-1}\|B\|_{{\mathcal B}_{p,u}}\big)^{n^{\log_2(1+\theta)}} r_0^n\Big)\Big\}<\infty.
\end{eqnarray}
Hence the conclusion $A^{-1}\in {\mathcal B}_{p,u}$ is proved.
\end{proof}

\bigskip

{\bf Acknowledgements}\quad The author would like to thank Professors Akram Aldroubi,
Karlheinz Gr\"ochenig, Ilya Krishtal, Zuhair Nashed,   Chang Eon Shin,
Wai-Shing Tang  for  their  various helps in the preparation of this manuscript.

\begin{thebibliography}{999}

\bibitem{akramjfa09}
A. Aldroubi, A. Baskakov and I. Krishtal, Slanted matrices, Banach frames,
and sampling, {\em J. Funct. Anal.}, {\bf  255}(2008),
1667--1691.

\bibitem{akramgrochenig01} A. Aldroubi and K. Gr\"ochenig,
Nonuniform sampling and reconstruction in shift-invariant space,
{\em SIAM Review}, {\bf 43}(2001), 585--620.

\bibitem{balan} R. Balan,  The  noncommutative Wiener lemma,
linear independence, and special properties of the algebra of
time-frequency shift operators,  {\em Trans. Amer. Math. Soc.},
{\bf 360}(2008), 3921-3941.

\bibitem{balanchl04}
R. Balan, P. G. Casazza, C. Heil, and Z. Landau, Density,
overcompleteness and localization of frames I. Theory;  II. Gabor
system, {\em J. Fourier Anal. Appl.},  {\bf  12}(2006), pp.
105--143 and pp. 309--344.

\bibitem{balank10}
R. Balan and I. Krishtal,  An almost periodic noncommutative Wiener's lemma, Submitted 2008.

\bibitem{baskakov90} A. G. Baskakov,  Wiener's theorem and
asymptotic estimates for elements of inverse matrices, {\em
Funktsional. Anal. i Prilozhen},  {\bf 24}(1990),  64--65;
translation in {\em Funct. Anal. Appl.},  {\bf 24}(1990),
222--224.

\bibitem{belinskiijfaa97} E. S. Belinskii, E. R. Liflyand, and R. M. Trigub,
The Banach algebra $A^*$ and its properties, {\em J. Fourier Anal. Appl.}, {\bf 3}(1997), 103--129.

\bibitem{beurling49} A. Beurling, On the spectral synthesis of bounded functions, {\em Acta Math.},  {\bf 81}(1949),
 225–-238.

 \bibitem{bochner} S.  Bochner and R. S. Phillips, Absolutely
convergent Fourier expansions for non-commutative normed rings,
{\em Ann. Math.}, {\bf 43}(1942), 409--418.

\bibitem{bo65} D. Borwein, Linear functionals connected with strong Ces\'aro summability, {\em J. London Math. Soc.}, {\bf 40}(1965), 628--634.

\bibitem{brandenburg75} L. Brandenburg, On identifying the maximal ideals in Banach algebra, {\em J. Math. Anal. Appl.}, {\bf 50}(1975), 489--510.

\bibitem{christensenstrohmer} O. Christensen and T. Strohmer, The
finite section method and problems in frame theory,  {\em J.
Approx. Th.}, {\bf 133}(2005), 221--237.

\bibitem{dahlkejat09}
S. Dahlke, M. Fornasier,  and K. Gr\"ochenig, Optimal adaptive computations in the Jaffard algebra and localized frames, {\em J. Approx. Th.}, to appear.

\bibitem{fs71} C. Fefferman and E. M. Stein, Some maximal inequalities, {\em Amer.  J. Math.}, {\bf 93}(1971), 107--115.

\bibitem{fe85} H. Feichtinger, An elementary approach to Wiener's third tauberian theorem for the Euclidean $n$-space,
    {\em Symposia Mathematica, Vol. XXIX (Cortona, 1984)}, pp. 267--301, Sympos. Math., XXIX, Academic Press, New York, 1987.

\bibitem{fw06} H. G. Feichtinger and F. Weisz,
 Wiener amalgams and pointwise summability of Fourier transforms and Fourier series, {\em  Math. Proc. Cambridge Philos. Soc.}, {\bf  140}(2006),  509--536.

\bibitem{fendler06} G. Fendler, K. Gr\"ochenig, and M. Leinert,  Symmetry of weighted $L^1$-algebras and the GRS-condition, {\em Bull. London Math. Soc.}, {\bf 38}(2006), 625--635.

\bibitem{garciabook85} J. Garcia-Cuerva and J. L. Rubio de Francia, {\em Weighted Norm Inequalities and Related Topics}, Elsevier Science Publ. Now York, 1985.

\bibitem{gelfandbook} I. M. Gelfand, D. A. Raikov, and G. E. Silov,
{\em Commutative Normed Rings}, New York: Chelsea 1964.

\bibitem{gkwieot89} I. Gohberg, M. A. Kaashoek, and H. J.
Woerdeman, The band method for positive and strictly contractive
extension problems: an alternative version and new applications,
{\em Integral Equations Operator Theory}, {\bf 12}(1989),
343--382.

\bibitem{grochenigsurvey} K. Gr\"ochenig,  Wiener's lemma: theme and variations, an introduction to spectral invariance and its applications,
    In {\em  Four Short Courses on Harmonic Analysis:
Wavelets, Frames, Time-Frequency Methods, and Applications to Signal and Image Analysis},  edited by P. Massopust and B. Forster,
Birkh\"auser, Boston 2010.

\bibitem{grochenig06} K. Gr\"ochenig, Time-frequency analysis of Sj\"ostrand's class, {\em
 Rev. Mat. Iberoam.}, {\bf 22}(2006), 703--724.

\bibitem{grochenigklotz10} K. Gr\"ochenig and A. Klotz,
Noncommutative approximation: inverse-closed subalgebras and off-diagonal decay of matrices, 
arXiv:0904.0386v1

\bibitem{grochenigl03} K. Gr\"ochenig and  M. Leinert,
Wiener's lemma for twisted convolution and Gabor frames, {\em J.
Amer. Math. Soc.}, {\bf 17}(2003), 1--18.

\bibitem{gltams06} K. Gr\"ochenig and M. Leinert, Symmetry of matrix
algebras and symbolic calculus for infinite matrices, {\em Trans,
Amer. Math. Soc.},  {\bf 358}(2006),  2695--2711.

\bibitem{graif08} K. Gr\"ochenig and Z. Rzeszotnik, Almost diagonlization of pseudodifferntial operators, {\em Ann. Inst. Fourier}, {\bf 58}(2008), 2279--2314.

    \bibitem{grochenigrsappear} K. Gr\"ochenig, Z. Rzeszotnik, and T.
Strohmer, Quantitative estimates for the finite section method,
arXiv:math/0610588v1

\bibitem{grochenigstrohmer}
K. Gr\"ochenig and T. Strohmer, Pseudodifferential operators on
locally compact abelian groups and Sj\"ostrand's symbol class,
{\em J. Reine Angew. Math.}, {\bf
613}(2007), 121--146.

    \bibitem{jaffard90} S. Jaffard, Properi\'et\'es des matrices bien
localis\'ees pr\'es de leur diagonale et quelques applications,
{\em Ann. Inst. Henri Poincar\'e}, {\bf 7}(1990), 461--476.

\bibitem{krishtal10} I.  Krishtal, Wiener's lemma and memory localization, Preprint 2009.

\bibitem{krishtaljat09} I.  Krishtal and K. A. Okoudjou,
Invertibility of the Gabor frame operator on the Wiener amalgam space,
{\em J. Approx.  Th.},  {\bf  153}(2008), 212--224.

\bibitem{kurbatov90} V. G. Kurbatov, Algebras of difference and integral operators, {\em Funktsional Anal. i. Prilozhen}, {\bf 24}(1990), 87--88.

\bibitem{lemarie94} P. G. Lemari\'e-Rieusset,
 Ondelettes et poids de Muckenhoupt, {\em Studia Mathematica},  {\bf 108}(1994),
 127--147.

\bibitem{mtams72} B. Muckenhoupt, Weighted norm inequalities for the Hardy maximal function, {\em Trans. Amer. Math. Soc.}, {\bf 165}(1972), 207--226.

\bibitem{Naimarkbook} M. A. Naimark, {\em Normed Algebras},
Wolters-Noordhoff Publishing Groningen, 1972.

\bibitem{rosenblum62} M. Rosenblum, Summability of Fourier series in $L^p(d\mu)$, {\em Trans. Amer. Math. Soc.}, {\bf 105}(1962), 32--42.

\bibitem{schur11}
 I. Schur,  Bemerkungen zur theorie der beschränkten bilinearformen mit unendlich vielen veränderlichen, {\em  J. Reine Angew. Math.}, {\bf  140}(1911), 1--28.

\bibitem{shincjfa09} C. E. Shin and Q. Sun,
 Stability of localized operators, {\em J. Funct. Anal.}, {\bf 256}(2009), 2417--2439.

\bibitem{sjostrand94} J. Sj\"ostrand, Wiener type algebra of
pseudodifferential operators, Centre de Mathematiques, Ecole
Polytechnique, Palaiseau France, Seminaire 1994--1995, December
1994.

\bibitem{steinbook93} E. M. Stein, {\em Harmonic Analysis: Real-Variable Methods, Orthogonality, and Oscillatory Integrals}, Princeton University Press, 1993.

\bibitem{suncasp05} Q. Sun, Wiener's lemma for infinite matrices with polynomial off-diagonal decay, {\em C. R. Acad. Sci. Paris, Ser. I}, {\bf 340}(2005), 567--570.

\bibitem{sunsiam06} Q. Sun,  Non-uniform sampling and reconstruction  for signals with  finite rate of
innovations,  {\em SIAM J. Math. Anal.}, {\bf 38}(2006),
1389--1422.

\bibitem{suntams07} Q. Sun, Wiener's lemma for infinite matrices,
{\em Trans. Amer. Math. Soc.},  {\bf 359}(2007), 3099--3123.

\bibitem{sunacha08} Q.  Sun, Wiener's lemma for localized integral operators, {\em Appl. Comput. Harmonic Anal.}, {\bf 25}(2008), 148--167.

    \bibitem{sunaicm08} Q. Sun,  Frames in spaces with finite rate of
innovations, {\em Adv. Comput. Math.},  {\bf 28}(2008), 301--329.

\bibitem{sunappear} Q. Sun, Stability criterion for convolution-dominated
infinite matrices, {\em Proc. Amer. Math. Soc.}, to appear.  arXiv:0907.3954v1

\bibitem{takesaki}  M. Takesaki, {\em Theory of Operator Algebra I},
Springer-Verlag, 1979.

\bibitem{te73} S. A. Telyakovskii, On a sufficient condition of Sidon for the integrability of trigonometric series, {\em Mat. Zameki}, {\bf 14}(1973),  317--328.

\bibitem{tessera09} R.  Tessera, Finding left inverse for operators
on $\ell^p({\mathbb Z}^d)$ with polynomial decay, arXiv:0801.1532.

\bibitem{wiener} N. Wiener,  Tauberian theorem, {\em Ann. Math.}, {\bf 33}(1932), 1--100.

\end {thebibliography}
\end{document}